\newcommand\reallywidehat[1]{%
\savestack{\tmpbox}{\stretchto{%
  \scaleto{%
    \scalerel*[\widthof{\ensuremath{#1}}]{\kern-.6pt\bigwedge\kern-.6pt}%
    {\rule[-\textheight/2]{1ex}{\textheight}}
  }{\textheight}%
}{0.5ex}}%
\stackon[1pt]{#1}{\tmpbox}%
}
\renewcommand{\eqref}[1]{(\ref{#1})}   
\numberwithin{equation}{section}
\theoremstyle{plain}
\newtheorem{theorem}{Theorem}[section]
\newtheorem{lemma}[theorem]{Lemma}
\newtheorem{corollary}[theorem]{Corollary}
\newtheorem{proposition}[theorem]{Proposition}
\newtheorem{conjecture}[theorem]{Conjecture}
\theoremstyle{definition}
\newtheorem{remark}[theorem]{Remark}
\theoremstyle{definition}
\newcommand{\Q}{{\mathbbm Q}}
\newcommand{\Z}{{\mathbbm Z}}
\newcommand{\C}{{\mathbbm C}}
\newcommand{\N}{{\mathbbm N}}
\newcommand{\F}{{\mathbbm F}}
\newcommand{\modp}{(\textnormal{mod }p)}
\title[On irreducibility of prefixed algebraic sets]{On irreducibility of prefixed algebraic sets in moduli spaces of prime degree polynomials}
\author[]{Niladri Patra\\ \tiny Research Associate - 1, Theoretical Statistics and Mathematics Department, \\ Indian Statistical Institute, Delhi Centre.} 
\address{ Office 122, Theoretical Statistics and Mathematics Department, Indian Statistical Institute, Delhi Centre, Katwaria Sarai, Hauz Khas, Delhi, India.}
\email{niladript@gmail.com}
\subjclass[2020]{Primary 11R09, Secondary 37F12, 37P45}
\begin{document}

\maketitle

\begin{abstract}
    Consider the moduli space, $\mathcal{M}_{d}$, of degree $d > 2$ polynomials over $\C$, with a marked critical point. Given $k \geq 0,\; p$ an odd prime, we show that the set $\Sigma_{k,1,p}$ of conjugacy classes of degree $p$ polynomials, for which the marked critical point is strictly $(k,1)$-preperiodic, is an irreducible quasi-affine variety. Irreducibility of these sets was conjectured by Milnor, and has been proved for $p=3$ by Buff, Epstein and Koch.
     
    We prove that the subspaces of $\Sigma_{k,1,p}$, that arise by varying the ramification index of the marked critical point all the way up to the unicritical case, are all irreducible subvarieties. Finally, using the irreducibility of $\Sigma_{k,1,p}$ we give a new and short proof of the fact that the set of all unicritical points of $\Sigma_{k,1,p}$ form one Galois orbit under the action of absolute Galois group of $\Q$.
\end{abstract}

\section{Introduction}

Complex dynamics studies the behaviour of iterates of rational maps on the Riemann sphere. Since the time of Fatou and Julia, it has been known that the behaviour of critical points of a map, under iterations, dictates the behaviour of the dynamical system generated by that map. Thurston (\cite{10.1007/BF02392534}) showed that fixing the number of distinct images of critical points of a polynomial map, under the self-iterates of the map, determines the conjugacy class of the dynamical system, up to finitely many choices.

Let $\mathcal{M}_{d}$ be the space of all degree $d$ polynomials over $\C$ with a marked critical point, modulo affine conjugation. The space $\mathcal{M}_{d}$ can be seen as $(d-1)$-dimensional affine variety. Thurston's work shows that by fixing preperiodicities for \emph{all} of the critical points of a general degree $d$ polynomial map, one gets finitely many points in $\mathcal{M}_{d}$.  Instead of \emph{all} the critical points, fixing the behaviour of \emph{some} of the critical points results in dynamically interesting subfamilies of $\mathcal{M}_{d}$. These subfamilies have been studied since the 1990s (\cite{article2}, \cite{M90},  \cite{BDK91}, \cite{BH92},  \cite{MP92}, \cite{R06},  \cite{LQ07}, \cite{inbook}, \cite{BKM10}, \cite{article1}, \cite{AK20}). A long-standing conjecture of Milnor (\cite{inbook}) states that:
\begin{conjecture}\label{conj}
Fix $k\geq 0,n>0, d>2$. The family $\Sigma_{k,n,d}$ of conjugacy classes of degree $d$ polynomials for which the marked critical point is strictly $(k,n)$-preperiodic is an irreducible subset of $\mathcal{M}_{d}$, under the Zariski topology.
\end{conjecture}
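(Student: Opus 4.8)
The plan is to prove irreducibility by a monodromy-transitivity argument, bootstrapping from the special cases already understood (the unicritical locus and, for the period, the eventually-fixed case $n=1$). First I would fix an affine model of $\mc{M}_d$: after affine conjugation, place the marked critical point at the origin and take $f$ monic, writing $f'(z)=d\,z\prod_{i=2}^{d-1}(z-c_i)$ and $f(z)=v+\int_0^z f'(t)\,dt$, so that $\mc{M}_d$ is coordinatized (up to a finite ambiguity) by $\mathbf a=(v,c_2,\dots,c_{d-1})\in\C^{d-1}$, with $v=f(0)$ the marked critical value and $c_2,\dots,c_{d-1}$ the free critical points. In these coordinates the orbit of the marked point is $0\mapsto v\mapsto f(v)\mapsto\cdots$, and the requirement that $0$ be $(k,n)$-preperiodic is the single equation $f^{k+n}(0)-f^{k}(0)=0$; the strictly $(k,n)$-preperiodic set $\Sigma_{k,n,d}$ is obtained from this hypersurface by deleting the loci of smaller preperiod or period, so it is pure of dimension $d-2$. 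The whole problem thereby reduces to showing that a generically finite dominant projection off $\Sigma_{k,n,d}$ has transitive monodromy.

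I would then set up the projection $\pi\colon\Sigma_{k,n,d}\to B\cong\C^{d-2}$ that remembers the free critical points $(c_2,\dots,c_{d-1})$ and forgets the marked critical value $v$. For generic $\mathbf c\in B$ the fiber is the finite nonempty set of $v$ making $0$ strictly $(k,n)$-preperiodic, cut out by a one-variable specialization $P_{\mathbf c}(v)=0$ of the dynamical polynomial above. Two preliminary facts must be verified: that $\pi$ is dominant with zero-dimensional generic fibers, and that every irreducible component of $\Sigma_{k,n,d}$ dominates $B$ (no component is ``vertical''). Granting these, the number of irreducible components of $\Sigma_{k,n,d}$ equals the number of orbits of the monodromy group $\Gamma\le\mathrm{Sym}\big(P_{\mathbf c}^{-1}(0)\big)$ on a generic fiber, so it suffices to prove $\Gamma$ transitive.

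Transitivity is the heart of the matter, and I would attack it by induction on $n$ combined with a degeneration to the unicritical stratum. As the free critical points coalesce into the marked one at the origin, $f$ specializes to the unicritical family $z\mapsto z^d+c$, whose strictly $(k,n)$-preperiodic parameters are roots of the Gleason/Misiurewicz polynomials; their known irreducibility forces all components of $\Sigma_{k,n,d}$ meeting this boundary stratum to be linked, seeding the transitivity. For the base case $n=1$ the eventually-fixed result supplies full transitivity outright, while for the inductive step I would factor $f^{k+n}(0)-f^{k}(0)$ through the $n$-th dynatomic polynomial to isolate the orbits landing on cycles of exact period $n$. I would then enlarge $\Gamma$ by generating transpositions from the branch locus of $\pi$: at a generic point of each branch component two preperiodic critical values collide (a Misiurewicz-type collision), producing a simple transposition in $\Gamma$, and exhibiting enough such collisions to connect every pair of fiber points would complete the transitivity argument.

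The step I expect to be the main obstacle is precisely this global linkage for general $n$ and composite $d$. When $n>1$ the generic fiber is stratified according to which exact-period-$n$ cycle the critical orbit lands on, and one must rule out a monodromy decomposition respecting that stratification, i.e. prove that the branch collisions genuinely mix orbits landing on different cycles; this is where the primitivity of the dynatomic factor and the arithmetic of $n$ enter, and it is untouched by the $n=1$ methods. At the same time, when $d$ is composite the ramification strata of the marked critical point proliferate and interact, so one must also show that no component of $\Sigma_{k,n,d}$ is trapped inside a proper ramification stratum; the prime-degree hypothesis in the available results is exactly what makes this stratum interaction trivial. Proving that $\Gamma$ acts transitively across all exact-period-$n$ cycles and all ramification strata simultaneously is the decisive difficulty on which the full conjecture turns.
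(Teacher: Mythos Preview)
The statement you are attempting is Conjecture~\ref{conj}, which the paper does \emph{not} prove in general; it is stated as an open conjecture. The paper establishes only the special case $n=1$, $d=p$ an odd prime (Theorem~\ref{irrc}), and it does so by a completely different, arithmetic route: it writes down the polynomial $h_{k,1,p}$ cutting out $\Sigma_{k,1,p}$ explicitly, shows it is a generalised $p$-Eisenstein polynomial with respect to $\beta-\alpha_1$ (Lemmas~\ref{Eisen1}, \ref{Eisen2}, Corollaries~\ref{resk1p}, \ref{res11p}), deduces irreducibility over $\Q$, and then upgrades to $\C$ by observing that the lowest-degree homogeneous part splits into distinct $\Q$-linear factors so that any Galois conjugate of a hypothetical $\C$-factor must coincide with it. Sections~\ref{obs1} and~\ref{obs2} explicitly explain why this arithmetic method breaks down already for $n=2$ and for $d=p^e$, $e\ge 2$.

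Your monodromy/degeneration plan is a genuinely different strategy, but as written it has circular dependencies that are fatal. The degeneration to the unicritical stratum $z\mapsto z^d+c$ is supposed to ``seed'' transitivity via the known irreducibility of the Gleason/Misiurewicz polynomials, but that irreducibility is precisely Conjecture~\ref{uniconj}, which is open for general $(k,n,d)$; the paper proves it only for $n=1$ and $d$ prime (Theorem~\ref{uni}), and in fact derives it \emph{from} Theorem~\ref{irrc} rather than the other way round. Likewise your inductive base case $n=1$ ``supplies full transitivity outright'' only for prime $d$; for composite $d$ it is open. So both inputs you bootstrap from are themselves instances of the conjecture you are trying to prove.

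Even setting aside the inputs, the core of your argument---that local transpositions from simple Misiurewicz collisions generate a group acting transitively across the stratification by exact-period-$n$ cycles---is asserted rather than proved. You correctly identify this as ``the decisive difficulty,'' and indeed it is: nothing in your outline produces a single branch point whose local monodromy swaps two orbits landing on distinct $n$-cycles, and without that the argument cannot close. The claim that no component of $\Sigma_{k,n,d}$ is vertical over $B$ also needs justification (it is plausible, and the paper's Thurston-rigidity lemma, Theorem~\ref{thurs}, is the sort of transversality input one would use, but you have not invoked it). In short, your proposal is a reasonable heuristic programme for the full conjecture, but it does not constitute a proof, and its key ingredients are either open or unproved in the sketch.
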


Fixing the ramification index of the marked critical point to be equal to $d$, we arrive at the unicritical case. In other words, polynomials with a unique critical point in $\C$ are called \emph{unicritical polynomials}. Any unicritical polynomial of degree $d\geq 2$ is conjugate to a polynomial of the form $f_{c}(X)= X^{d} + c, c\in \C$ ($0$ is the unique finite critical point of $f_{c}$ with ramification index $d$).  In the article \cite{Milnor+2014+15+24}, Milnor studied arithmetic properties of unicritical polynomials and made the following conjecture:
\begin{conjecture}\label{uniconj}
    Fix $k \geq 0,n > 0,d \geq 2, d$ prime. The values of $c$ for which $0$ is strictly $(k,n)$-preperiodic under $f_{c}$, form one Galois orbit under the action of the absolute Galois group of $\Q$.   
 \end{conjecture}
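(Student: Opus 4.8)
\textbf{Proof proposal} (the case $n=1$, $d=p$ an odd prime, which is what follows from the irreducibility of $\Sigma_{k,1,p}$).

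The plan is to convert the Galois-orbit assertion into the statement that a certain $0$-dimensional $\Q$-scheme is irreducible, and then to derive that from the irreducibility of $\Sigma_{k,1,p}$. Recall first the elementary picture. For $f_c(X)=X^p+c$, ``$0$ is strictly $(k,1)$-preperiodic'' means $f_c^k(0)$ is a fixed point while no earlier iterate is; hence the admissible $c$ are the roots of a polynomial $M_{k,1,p}(c)\in\Z[c]$ obtained from $f_c^{k+1}(0)-f_c^k(0)$ by removing the lower-order factors coming from premature periodicity. Since $f_{\zeta c}$ is affinely conjugate to $f_c$ for $\zeta\in\mu_{p-1}$, the root set is $\mu_{p-1}$-stable, so (for $k\ge1$, when $c=0$ is not a root) $M_{k,1,p}(c)=\Psi_{k,1,p}(c^{p-1})$ for some $\Psi_{k,1,p}\in\Z[T]$; and since the unicritical locus $U\subset\mathcal M_p$ is an affine line with coordinate $T=c^{p-1}$, the unicritical points of $\Sigma_{k,1,p}$ — that is, $W_p:=\Sigma_{k,1,p}\cap U$ — form the reduced closed subscheme $\{\Psi_{k,1,p}=0\}$ of $U$. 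It therefore suffices to show $W_p$ is \emph{irreducible over $\Q$}: an irreducible $0$-dimensional $\Q$-scheme is the spectrum of a number field, so its $\overline{\Q}$-points constitute a single $\mathrm{Gal}(\overline{\Q}/\Q)$-orbit. (To recover Conjecture~\ref{uniconj} verbatim, i.e.\ for the $c$-values rather than only the moduli points $[f_c]$, one needs in addition that $X^{p-1}-c^{p-1}$ is irreducible over $\Q(c^{p-1})$ for admissible $c$; I would obtain this from Capelli's criterion together with a ramification analysis at $p$, exploiting that modulo $p$ the map $f_c$ is a $p$-th power up to translation.)

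Now the dimension bookkeeping: $\dim\mathcal M_p=p-1$, and $\Sigma_{k,1,p}$ is (an open subset of) the hypersurface $\{f^{k+1}(c_1)=f^k(c_1)\}$ in suitable coordinates, so $\dim\Sigma_{k,1,p}=p-2$; since $\dim U=1$ and $W_p\subsetneq U$, the scheme $W_p$ is $0$-dimensional. To force it irreducible over $\Q$ I would descend along the ramification filtration
\[
\Sigma_{k,1,p}=W_2\supset W_3\supset\cdots\supset W_p,
\]
where $W_j\subset\Sigma_{k,1,p}$ is the locus on which the marked critical point has ramification index $\ge j$ (so $\dim W_j=p-j$, and $W_p$ is the unicritical stratum). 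The base case is the main theorem: $\Sigma_{k,1,p}$ is irreducible over $\C$, and being cut out by integer polynomials it is geometrically irreducible, hence irreducible over $\Q$. For the inductive step one writes $W_{j+1}$ as the zero locus in $W_j$ of a single $\Q$-regular function $g_j$ — the one detecting the collision of the next free critical point with the marked one — and argues that $\{g_j=0\}$ stays irreducible over $\Q$. Iterating yields $W_p$ irreducible over $\Q$, and with the first paragraph this gives the theorem; this is also exactly the chain of irreducible subvarieties of the ``varying ramification'' statement, with $W_p$ its terminal, $0$-dimensional member.

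Separability of $M_{k,1,p}$ — needed only if one wants the crisper phrasing ``$M_{k,1,p}$ is irreducible'' rather than ``its root set is one Galois orbit'' — I would handle by the standard transversality at Misiurewicz parameters (the first return of the critical orbit to the attracting fixed point is non-degenerate, so every root is simple), or else by reducing $M_{k,1,p}$ modulo a convenient prime; this is routine and not the crux.

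The hard part is the inductive step, because a codimension-one slice of an irreducible variety is in general reducible: the irreducibility of $\Sigma_{k,1,p}$ does not \emph{formally} propagate to the $0$-dimensional stratum $W_p$. Clearing this requires using the dynamical structure of the collision functions $g_j$, rather than a generic Bertini argument — e.g.\ producing on each $W_j$ a $\Q$-rational fibration in which $\{g_j=0\}$ is a controllable fibre, or re-running on the unicritical slice the monodromy / group-theoretic input behind the main theorem (where primality of $p$ is what makes the relevant monodromy group large). I would also stress that ``$W_p$ irreducible'' has to be read over $\Q$, not geometrically: already for $k=2$ one computes $\Psi_{2,1,p}(T)=\frac{(T+1)^p-1}{T}=1+(T+1)+\cdots+(T+1)^{p-1}$, which is $\Q$-irreducible of degree $p-1$, so $W_p=\operatorname{Spec}\Q(\zeta_p)$ is one Galois orbit of size $p-1$, not a single point. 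So the whole task reduces to establishing irreducibility over $\Q$ at the bottom of the filtration.
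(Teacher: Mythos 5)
There is a genuine gap, and you have named it yourself: the entire content of the theorem sits in the step you defer as ``the hard part.'' Your reduction of the Galois-orbit statement to $\Q$-irreducibility of a zero-dimensional scheme is fine, and your observation that irreducibility of $\Sigma_{k,1,p}$ does not formally propagate down a codimension-one slice is exactly right --- but the filtration $W_2\supset W_3\supset\cdots\supset W_p$ with an inductive ``stays irreducible over $\Q$'' step is not an argument, and no mechanism for it is supplied (a Bertini-type or monodromy argument is gestured at but not constructed). Note also that the paper's own treatment of the chain \eqref{chain} is \emph{not} an induction along slices: each stratum is handled directly, and the unicritical endpoint is handled by yet another direct specialization.

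The missing idea is arithmetic, not geometric. By Lemma \ref{Eisen1}, $h_{k,1,p}\equiv(\beta-\alpha_1)^{N}\pmod p$, and by Lemma \ref{Eisen2} its lowest-degree homogeneous part is $p(\beta+\sum_{i=1}^{p-2}\alpha_i)\prod_{i=2}^{p-2}(\beta-\alpha_i)$. Both facts survive the substitution $\alpha_1=\cdots=\alpha_{p-2}=0$: the specialized polynomial is $\equiv\beta^{N}\pmod p$ and has lowest-degree term exactly $p\beta^{p-2}$, so $h_{k,1,p}(0,\dots,0,\beta)/\beta^{p-2}$ is $p$-Eisenstein in $\beta$ and hence irreducible over $\Q$; since the polynomial $R_{k,1,p}(\beta)$ cutting out the strictly $(k,1)$-preperiodic parameters divides it and is prime to $\beta$ for $k\geq 1$, it is constant or irreducible (Theorem \ref{uni}). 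This also dissolves your side issue about descending from $T=c^{p-1}$ to $c$: the paper never passes to the $\mu_{p-1}$-quotient coordinate, because $\beta$ \emph{is} $c$ on the parameter space $\C^{p-1}$ upstairs of $\mathcal{M}_p$, so Eisenstein irreducibility in $\beta$ already gives one Galois orbit of $c$-values, with no Capelli or ramification analysis required. Your worked example $\Psi_{2,1,p}(T)=\Phi_p(T+1)$ is correct, but it illustrates the conclusion rather than supplying the proof.
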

 MIlnor's original question extends for non-prime degree polynomials as well, but there are more complications (see \cite[Remark~3.5]{Milnor+2014+15+24}). As study of unicritical polynomials in the moduli spaces of non-prime degree polynomials is out of the scope of this work (see Section~\ref{obs2}), we restrict ourself to the study of Conjecture \ref{uniconj}.
Conjecture \ref{uniconj} and its variants has seen a lot of work in the last decade (\cite{HT15}, \cite{FG15}, \cite{article1}, \cite{10.7169/facm/1799}, \cite{G20}, \cite{H21}, \cite{Han21}, \cite{BGok22}, \cite{BG23}). In particular, Vefa Goksel (\cite{10.7169/facm/1799}) has proved Conjecture \ref{uniconj} for prime degree polynomials with $k \geq 0, n=1$, by studying arithmetic properties of the forward orbit of $0$ under $f_{c}$. This result of Vefa Goksel has been used by Buff, Epstein and Koch (\cite{article1}) to prove Conjecture \ref{conj} for cubic polynomials ($d=3$), with period $n=1$. In a starkly different approach, Arfeux and Kiwi (\cite{AK20}) have proved the Conjecture \ref{conj} in the strictly periodic case, $k=0$, in the moduli space of cubic polynomials.

    As discussed above, Buff, Epstein and Koch proved Conjecture~\ref{conj} for prefixed curves in the moduli space of cubic polynomials using Goksel's result on Conjecture~\ref{uniconj}. This was the first major breakthrough in this conjecture. However, their proof can not be generalized for degree $\geq 5$. The problem to this generalization is two-fold. Firstly, the Irreducibility Lemma (\cite[Lemma~5]{article1}) does not apply to $\Sigma_{k,n,d}$ where the degree is $\geq 5$, as the origin is not a smooth point (Lemma~\ref{Eisen2}). Secondly, \cite[Lemma~12]{article1} can not be suitably generalized for higher degrees. Our approach to this problem resembles the approach of \cite{article1}. However, our techniques are applicable over moduli spaces of polynomial of arbitrary prime degree. One of the key observations in this regard is that Zariski closure of $\Sigma_{k,1,p}, k \geq 0, p$ prime, is the zero set of a polynomial which is of Eisenstein type. This is entirely new to the literature. Thanks to this result, we are able to prove Conjecture~\ref{conj} for $\Sigma_{k,1,p}, k \geq 0, p$ prime without relying on Conjecture~\ref{uniconj}, in contrast to the approach in \cite{article1} for the particular case of $p=3$. We also generalize the Irreducibility Lemma (\cite[Lemma~5]{article1}) for non-smooth points on a variety. Apart from these, the general case of moduli spaces of polynomials of arbitrary prime degree $p$, presents an entire array of new obstacles many of which become trivial for small values of $p$ (compare \cite{article1}).        
    
    We study Conjecture \ref{conj} for general degree $d$. Without any reliance on Conjecture \ref{uniconj}, we prove Conjecture \ref{conj} for prefixed families in the moduli space of odd prime degree polynomials. Moreover, we show that these quasi-affine varieties are of \emph{Eisenstein type}. This Eisenstein nature leads us to show irreducibility of subvarieties that arise from fixing the ramification index of the marked critical point. This also allows us to rediscover an irreducibility result of Vefa Goksel (\cite{10.7169/facm/1799}) in the unicritical case.

We show that (Theorem \ref{irrc}):
\begin{theorem}\label{theorem1}
    Fix $n=1$. Conjecture \ref{conj} is true for any odd prime degree $d$.
\end{theorem}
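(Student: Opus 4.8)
\emph{Proof strategy.} The plan is to replace $\Sigma_{k,1,p}$ by an explicit birational model on which the critical orbit appears as coordinates, present that model as a finite branched cover of an affine space, and establish irreducibility by showing the cover is connected.

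First I would normalize. Using affine conjugation, move the terminal fixed point $\gamma=f^k(\omega)$ to the origin and rescale so that $f$ is monic; then $f(X)=X^p+c_{p-1}X^{p-1}+\cdots+c_1X$, the marked critical point $\omega$ satisfies $f'(\omega)=0$, and the residual symmetry is the finite group $\mu_{p-1}$ of scalings $X\mapsto \zeta X$. Since a finite quotient of an irreducible variety is irreducible, it suffices to treat this normalized model. Writing the transient orbit as $x_0=\omega$, $x_i=f(x_{i-1})$, $x_k=0$, the model is
\[
\widehat{\Sigma}_k=\bigl\{(c_\bullet,x_0,\dots,x_{k-1}):f'(x_0)=0,\ f(x_i)=x_{i+1}\ (0\le i\le k-2),\ f(x_{k-1})=0\bigr\},
\]
restricted to the open locus where $x_0,\dots,x_{k-1},0$ are pairwise distinct — which contains the entire strict locus, since a strict $(k,1)$-portrait has $k+1$ distinct orbit points — and there it maps isomorphically onto $\Sigma_{k,1,p}$. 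In the range $k\le p-2$ this already finishes the proof: for pairwise distinct nonzero $x_0,\dots,x_{k-1}$ the conditions $f(x_i)=x_{i+1}$, $f(x_{k-1})=0$ and $f'(x_0)=0$ are $k+1$ independent (Hermite--Vandermonde) linear conditions on the $p-1$ coefficients $c_\bullet$, so $\widehat{\Sigma}_k$ is an affine-space bundle over the irreducible base $\{(x_0,\dots,x_{k-1})\ \text{distinct, nonzero}\}$ and hence irreducible; one even sees $\widehat{\Sigma}_1\cong\{\,X(X-\omega)^2h(X)\,\}\cong\A^{p-2}$.

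For general $k$ this shortcut collapses, because a monic degree-$p$ polynomial is overdetermined by a long orbit and the $x_i$ cannot be taken as free parameters. Instead I would fix a generically finite projection $\pi\colon\overline{\widehat{\Sigma}_k}\to\A^{p-2}$ — for instance recording $(c_2,\dots,c_{p-1})$ together with the non-marked critical data, equivalently remembering $f$ modulo its marked critical orbit; the ambient moduli variety being rational, the target may be taken to be affine space. Then $\Sigma_{k,1,p}$ is irreducible precisely when the monodromy of $\pi$ on the complement of its branch locus is transitive on the sheets. I would identify the branch divisor with the degenerations of the marked critical orbit — collisions $x_i=x_j$, the marked point becoming multiply critical (entering the deeper ramification strata $\widehat{\Sigma}_k^{(r)}$, $r\ge 3$), the marked point meeting a non-marked critical point, or the orbit hitting $0$ prematurely — and show that over a generic point of the codimension-one component cut out by a single simple collision $x_i=x_j$ exactly two sheets of $\pi$ come together transversally. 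The transversality is an Epstein-type computation with the derivative of the iterated map; it produces a transposition in the local monodromy. Since a permutation group generated by transpositions whose support graph on the sheets is connected is the full symmetric group, transitivity is reduced to the combinatorial assertion that the collisions coming from branch points connect all the sheets; running the same analysis inside each stratum gives, in addition, irreducibility of the deeper strata, the deepest being the unicritical locus.

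The crux — and the step I expect to be hardest — is exactly this combinatorial connectivity: that any two critical-orbit portraits of a strictly $(k,1)$-preperiodic marked degree-$p$ polynomial are joined by a chain of elementary collisions, i.e.\ the relevant portrait/Hubbard-tree space is connected, with no proper subset of sheets left stable by the monodromy. Here the primality of $p$ is essential: in prime degree no polynomial is a nontrivial composition $g\circ h$, which excludes the imprimitivity blocks and hidden symmetries that could obstruct transitivity in composite degree, and it also underpins the irreducibility of the ramification strata (where, at the top, one recovers Goksel's theorem). Once transitive monodromy is established, $\Sigma_{k,1,p}$ is irreducible over $\C$; feeding the same transitivity — the cover being defined over $\Q$ — into the fibre over the $\Q$-point of the base corresponding to the unicritical direction then shows, exactly as in the final corollary, that the unicritical $(k,1)$-preperiodic points form a single orbit under $\operatorname{Gal}(\overline{\Q}/\Q)$.
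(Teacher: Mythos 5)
There is a genuine gap. Your argument splits into two regimes, and only the first is actually carried out. For $k\le p-2$ the linear-algebra observation is sound: with the fixed point normalized to $0$ and $f(X)=X^p+c_{p-1}X^{p-1}+\cdots+c_1X$, the conditions $f'(x_0)=0$, $f(x_i)=x_{i+1}$, $f(x_{k-1})=0$ are affine-linear in $(c_1,\dots,c_{p-1})$ with a confluent Vandermonde coefficient matrix of full rank $k+1$ at distinct nonzero nodes, so the strict locus is an affine-space bundle over an irreducible base and its image $\Sigma_{k,1,p}$ is irreducible. But for $k\ge p-1$ — which the theorem must cover for every $k$ — your proof reduces to the assertion that the monodromy of a generically finite projection $\pi$ is transitive, generated by transpositions at simple collision divisors whose support graph is connected. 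You explicitly flag this connectivity as the step you expect to be hardest, and you do not prove it; nor do you carry out the local transversality computation showing that a simple collision $x_i=x_j$ produces exactly a transposition. That connectivity statement \emph{is} the content of Milnor's conjecture in this range, so the proposal does not close the loop. The appeal to primality via indecomposability of degree-$p$ polynomials rules out certain imprimitive monodromy groups but does not yield transitivity, which is the property actually needed.

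For contrast, the paper takes an entirely arithmetic route that covers all $k$ uniformly: in the normal form parametrized by critical points and one critical value, $f\equiv z^p-\alpha_1^p+\beta \pmod p$, hence $\hat f_{k,1,p}\equiv(\beta-\alpha_1)^{p^k}\pmod p$; a generalised Eisenstein criterion (with the resultant hypothesis verified via an elementary mod-$p$ version of Thurston rigidity and a computation of the lowest-degree homogeneous part of $h_{k,1,p}$) gives irreducibility of $h_{k,1,p}$ over $\Q$, and the fact that this lowest-degree part splits into distinct linear factors over $\Q$ upgrades irreducibility to $\C$. Your bundle argument for small $k$ is a genuinely different and more elementary proof in that range, but as written the theorem is not proved.
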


We use Theorem \ref{theorem1} to give a new and short proof of conjecture \ref{uniconj} for odd prime degree polynomials. In particular, we prove that (Theorem \ref{uni}):
\begin{theorem}\label{theorem2}
    Fix $n=1$. Conjecture \ref{uniconj} is true for any odd prime degree $d$.
\end{theorem}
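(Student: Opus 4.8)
The plan is to combine the geometric irreducibility of $\Sigma_{k,1,p}$ from Theorem~\ref{theorem1} with an analysis of this space along its unicritical stratum. First I would observe that $\Sigma_{k,1,p}\subseteq\mathcal{M}_p$ is defined over $\Q$ — it is carved out of the $\Q$-scheme $\mathcal{M}_p$ by Gleason--Milnor-type equations with coefficients in $\Z$ — so by Theorem~\ref{theorem1} it is a geometrically integral quasi-affine $\Q$-variety of dimension $p-2$. Let $G_{k,1,p}\in\Q[c]$ be the monic polynomial whose roots are exactly the parameters $c$ for which $0$ is strictly $(k,1)$-preperiodic under $z\mapsto z^{p}+c$, so that Conjecture~\ref{uniconj} for $d=p$ is the assertion that $G_{k,1,p}$ has a single irreducible factor over $\Q$. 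Writing $\mathcal{U}=\mathcal{U}_{k,1,p}\subseteq\Sigma_{k,1,p}$ for the unicritical stratum, a closed $0$-dimensional $\Q$-subscheme, and keeping track of the $\mu_{p-1}$-action of affine conjugacy that relates parameters $c$ to moduli points, this conjecture is equivalent to the statement that $\mathcal{U}$ is \emph{connected} as a $\Q$-scheme, i.e.\ that $\mathcal{U}_{\mathrm{red}}$ is the spectrum of a single number field.

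Next I would reframe this connectedness. Recording the elementary symmetric functions $(\sigma_{1},\dots,\sigma_{p-2})$ of the non-marked critical points (measured relative to the marked one) defines a $\Q$-morphism $\phi\colon\Sigma_{k,1,p}\to\mathbb{A}^{p-2}$ with $\phi^{-1}(0)=\mathcal{U}$; since $\dim\Sigma_{k,1,p}=p-2$ and the non-marked critical points vary freely over a dense open subset of $\Sigma_{k,1,p}$ — which I would check by exhibiting one degree-$p$, $(k,1)$-preperiodic polynomial with critical points in general position and then deforming the marked critical value — the morphism $\phi$ is dominant and generically finite, and $\mathcal{U}$ is $0$-dimensional. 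By idempotent lifting, connectedness of $\mathcal{U}$ is equivalent to $\Sigma_{k,1,p}$ being \emph{formally connected along} $\mathcal{U}$ (no nontrivial idempotents in the $(\sigma_{1},\dots,\sigma_{p-2})$-adic completion of its coordinate ring), equivalently to the place $\sigma_{1}=\dots=\sigma_{p-2}=0$ of $\mathbb{A}^{p-2}$ having a unique extension to the function field $\Q(\Sigma_{k,1,p})$; passing to a proper model and taking the Stein factorisation of the extended morphism, this is the statement that the finite part of that factorisation has a single point over the origin.

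The hard part will be exactly this ``non-splitting over the unicritical point'', and it does \emph{not} follow from the irreducibility of $\Sigma_{k,1,p}$ alone: a dominant, generically finite morphism out of an irreducible $\Q$-variety can have a reducible fibre over a $\Q$-point (e.g.\ $\mathbb{A}^{1}\to\mathbb{A}^{1}$, $x\mapsto x^{3}-1$). What has to be used is the precise way $\Sigma_{k,1,p}$ degenerates to the unicritical case, and here I expect the argument to lean on the structure of the intermediate ramification strata $\Sigma^{(r)}_{k,1,p}$ ($2\le r\le p$) and on the construction underlying the proof of Theorem~\ref{theorem1}: as the non-marked critical points collide with the marked one the $(k,1)$-preperiodic polynomials pass through the strata $\Sigma^{(p-1)}_{k,1,p},\Sigma^{(p-2)}_{k,1,p},\dots$ in a controlled manner, which should show that $\Sigma_{k,1,p}$ is analytically irreducible (unibranch) along $\mathcal{U}$; combining this with the $\mu_{p-1}$- and $\Z/p$-symmetries of unicritical dynamics and with the global geometric irreducibility of $\Sigma_{k,1,p}$ should force the local monodromy of $\phi$ about the origin to act transitively on the relevant sheets, which is the desired non-splitting. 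Equivalently, one rules out a nontrivial idempotent of $\mathcal{O}(\mathcal{U})$ by showing it cannot be matched by a clopen decomposition of a neighbourhood of $\mathcal{U}$ inside the irreducible variety $\Sigma_{k,1,p}$. Once this is in place $\mathcal{U}$ is connected, hence Conjecture~\ref{uniconj}, and therefore Theorem~\ref{theorem2}, holds for odd prime degree; recording in addition that $G_{k,1,p}$ is separable (by the transversality of Misiurewicz parameters, or as a by-product of the local analysis) upgrades this to the irreducibility of $G_{k,1,p}$, so that the Galois orbit has the full expected size.
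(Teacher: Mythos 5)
Your proposal correctly diagnoses the central difficulty --- that geometric irreducibility of $\Sigma_{k,1,p}$ alone cannot force the fibre over the unicritical locus to be a single Galois orbit (your $x\mapsto x^{3}-1$ example is exactly the right cautionary one) --- but it then leaves that difficulty unresolved. The entire weight of the argument rests on the claims that $\Sigma_{k,1,p}$ is ``analytically irreducible (unibranch) along $\mathcal{U}$'' and that the local monodromy of $\phi$ about the origin ``should'' act transitively on the sheets; these are introduced with ``I expect'' and ``should show'' and are never established. Nothing in the structure of the ramification strata or in the $\mu_{p-1}$-symmetry is actually shown to produce this unibranch property, so the proof has a hole precisely at its crux. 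There is also a secondary gap: the asserted equivalence between Conjecture \ref{uniconj} (a statement about the $c$-values) and connectedness of the unicritical stratum $\mathcal{U}\subset\mathcal{M}_{p}$ (a statement about $\mu_{p-1}$-orbits of $c$-values) needs an argument in the direction ``$\mathcal{U}$ connected $\Rightarrow$ the $c$'s form one orbit,'' since a single Galois orbit of moduli points could a priori come from several Galois orbits of parameters permuted by $\mu_{p-1}$.

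The paper's route is entirely different and much more elementary, and it is worth seeing why: the extra input beyond irreducibility of $\Sigma_{k,1,p}$ is not a local geometric statement but the \emph{arithmetic} structure of $h_{k,1,p}$ already established in Section \ref{t'''}, which survives specialization. Concretely, one sets $\alpha_{1}=\cdots=\alpha_{p-1}=0$ in $h_{k,1,p}$. By Lemma \ref{Eisen1}, $h_{k,1,p}\equiv(\beta-\alpha_{1})^{N_{k,p}}\pmod{p}$, so the specialization is $\equiv\beta^{N_{k,p}}\pmod{p}$; by Lemma \ref{Eisen2} its lowest degree term is $p\beta^{p-2}$. Hence $h_{k,1,p}(0,\dots,0,\beta)/\beta^{p-2}$ is a $p$-Eisenstein polynomial in $\beta$. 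The polynomial $R_{k,1,p}$ whose roots are the strictly $(k,1)$-preperiodic parameters divides this and is coprime to $\beta$ for $k\geq 1$, so it is itself $p$-Eisenstein, hence constant or irreducible over $\Q$ --- no monodromy, completion, or Stein factorisation is needed. If you want to salvage your approach, you would have to supply an actual proof of the unibranch/transitivity claim, and it is not clear that one exists independently of the Eisenstein computation; I would recommend instead tracking the mod-$p$ congruence and the lowest-degree homogeneous part of $h_{k,1,p}$ through the specialization.
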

\subsection{Mixed critical cases:} 
In Conjecture \ref{conj}, the ramification index of the marked critical point has been allowed to vary freely between $2$ and $d$. Fixing a lower bound $j$ for the ramification index and varying $j$ between $2$ and $d$, one obtains a finite decreasing sequence of quasi-affine varieties, with strictly decreasing dimensions. We call these sets mixed critical cases. The advantage of studying $\Sigma_{k,n,d}$ without any dependence on the unicritical case, is that it allows us to prove irreducibility of mixed critical subfamilies of $\mathcal{M}_{d}$. The mixed critical cases have not been studied much so far.  In \cite{R06}, Roesch studied the topology of the connectedness locus of a particular mixed critical family of maps. Here,  we consider algebraic properties of mixed critical subfamilies of $\Sigma_{k,1,p}$ and show that these are irreducible under Zariski topology on $\mathcal{M}_{p}$.
\begin{theorem}\label{theorem3}
    Fix $k\geq 0, p \text{ prime },p>3,\; 2<j<p$. The family of conjugacy classes of degree $p$ polynomials for which the marked critical point is strictly $(k,1)$-preperiodic and has ramification index $\geq j$, is an irreducible quasi-affine subvariety of $\mathcal{M}_{p}$.  
\end{theorem}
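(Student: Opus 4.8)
The plan is to realize the family in the statement as a linear slice of $\Sigma_{k,1,p}$ and then to run the irreducibility argument behind Theorem~\ref{theorem1} with the ramification threshold $j$ carried as a parameter. In Milnor's normal form every class of $\mathcal{M}_{p}$ is represented, uniquely up to the residual $\mu_{p-1}$-action, by $f_a(X)=X^{p}+a_{p-1}X^{p-1}+\dots+a_2X^2+a_0$ with the marked critical point at $0$, so $\mathcal{M}_{p}\cong\A^{p-1}/\mu_{p-1}$ with coordinates $a=(a_{p-1},\dots,a_2,a_0)$. Since $f_a'(X)=pX^{p-1}+(p-1)a_{p-1}X^{p-2}+\dots+2a_2X$ vanishes at $0$ to order $\ge j-1$ precisely when $a_2=a_3=\dots=a_{j-1}=0$, the family in question is the image in $\mathcal{M}_{p}$ of $\Sigma^{(j)}:=\Sigma_{k,1,p}\cap V_j$, where $V_j=\{a_2=\dots=a_{j-1}=0\}\cong\A^{p-j+1}$; being open in its closure, which is closed in the affine variety $\mathcal{M}_{p}$, $\Sigma^{(j)}$ is automatically quasi-affine. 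The hypothesis $2<j<p$ merely isolates the genuinely new thresholds: $j=2$ is Theorem~\ref{theorem1} itself, and $j=p$ is the ($0$-dimensional, hence legitimately excluded) unicritical locus.

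Following the setup behind Theorem~\ref{theorem1}, the closure $\overline{\Sigma_{k,1,p}}$ is cut out by one polynomial $\Phi_{k,p}\in\C[a_{p-1},\dots,a_2,a_0]$, namely the (by Theorem~\ref{theorem1}, irreducible) component of $\{f_a^{\,k+1}(0)-f_a^{\,k}(0)=0\}$ not contained in the smaller-preperiod locus $\{f_a^{\,k}(0)-f_a^{\,k-1}(0)=0\}$; one may take $\Phi_{k,p}$ monic in $a_0$, since $f_a^{\,k+1}(0)-f_a^{\,k}(0)$ is monic in $a_0$, and for $k\ge1$ it is not divisible by $a_0$ (on $\Sigma_{k,1,p}$ with $k\ge1$ the point $0$ is not fixed, so $a_0=f_a(0)\ne0$). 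Restricting the critical-orbit relations to $V_j$ replaces $\Phi_{k,p}$ by $\Phi_{k,p}^{(j)}:=\Phi_{k,p}(a_{p-1},\dots,a_j,0,\dots,0,a_0)$, still monic in $a_0$, and a routine check shows that, away from the open strictness conditions and a proper closed excess locus, $V(\Phi_{k,p}^{(j)})$ descends under the $\mu_{p-1}$-quotient to $\overline{\Sigma^{(j)}}$, that $\Sigma^{(j)}$ is nonempty for $3\le j\le p-1$ (the preperiod-$k$ condition is not forced by smaller preperiod inside this $(p-j+1)$-parameter family), and that the case $k=0$ is immediate ($\Phi_{0,p}^{(j)}=a_0$ and $\Sigma^{(j)}$ is a linear subspace). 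Since taking the finite quotient preserves irreducibility, since deleting proper closed subsets preserves irreducibility, and since $\Phi_{k,p}^{(j)}$ is monic in $a_0$ and hence primitive over $\C[a_{p-1},\dots,a_j]$, the whole theorem reduces, for $k\ge1$, to the single assertion that $\Phi_{k,p}^{(j)}$ is irreducible over $\C(a_{p-1},\dots,a_j)$.

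The delicate point is that irreducibility of $\Phi_{k,p}$ does not formally imply irreducibility of its specialization $\Phi_{k,p}^{(j)}$ to the very special slice $V_j$: the irreducibility mechanism of Theorem~\ref{theorem1} — which I expect to be of Eisenstein / Newton-polygon type, in the spirit of Goksel's and Buff--Epstein--Koch's arithmetic treatments — must be rerun using only structure available on $V_j$. My proposal is a Newton-polygon argument with respect to the valuation $v=v_{a_j}$ given by order of vanishing along the hyperplane $\{a_j=0\}$ (rather than along $\{a_2=0\}$, which is killed by the slicing). Two facts drive it. First, passing from $\Phi_{k,p}$ to $\Phi_{k,p}^{(j)}$, i.e.\ setting $a_2=\dots=a_{j-1}=0$, can only raise the $v$-valuation of each coefficient of the polynomial when it is regarded as an element of $\C[a_{p-1},\dots,a_j][a_0]$. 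Second, the leading coefficient is $1$ and the constant term $\Phi_{k,p}^{(j)}(\,\cdot\,,a_0=0)$ has some $v$-valuation $V$; if $V$ equals the $v$-valuation of $\Phi_{k,p}(\,\cdot\,,a_0=0)$ then the $v$-adic Newton polygon of $\Phi_{k,p}^{(j)}$ in $a_0$ is forced to be the single segment from $(0,V)$ to $(D,0)$, with $D=\deg_{a_0}\Phi_{k,p}^{(j)}$, because every other vertex can only lie on or above it. Provided $\gcd(V,D)=1$, this segment is pure, so $\Phi_{k,p}^{(j)}$ is irreducible already over the completion of $\C(a_{p-1},\dots,a_j)$ at $v$, and a fortiori over $\C(a_{p-1},\dots,a_j)$.

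What then remains is exactly the arithmetic of the marked critical orbit $0\mapsto a_0\mapsto f_a(a_0)\mapsto\cdots$ for the restricted family $X^{p}+a_{p-1}X^{p-1}+\dots+a_jX^{j}+a_0$: one must compute $D$ and the $v$-adic Newton polygon of $\Phi_{k,p}^{(j)}$ in $a_0$, identify $V$ — the shadow of how $\overline{\Sigma^{(j)}}$ meets the hyperplane $V_{j+1}=\{a_j=0\}$, where $f_a$ degenerates to a polynomial of strictly higher ramification at $0$ — and verify $\gcd(V,D)=1$, all uniformly in $k$ and in the full range $3\le j\le p-1$. This is the step I expect to be the main obstacle: the $v$-valuations of the intermediate coefficients and of the constant term vary with $j$, and keeping the Newton polygon a single pure segment across the whole range is where the real work lies. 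Should a clean Newton polygon fail for some $j$, the fallback is to establish transitivity of the monodromy of the generically finite projection $\overline{\Sigma^{(j)}}\to\A^{p-j}$, $a\mapsto(a_{p-1},\dots,a_j)$ — e.g.\ by restricting to a generic line and tracking how the sheets are permuted around simple branch points — which once more reduces to the same explicit arithmetic of the critical orbit on $V_j$.
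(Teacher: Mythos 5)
There is a genuine gap: the heart of your argument --- that the $v_{a_j}$-adic Newton polygon of $\Phi_{k,p}^{(j)}$ in $a_0$ is a single segment from $(0,V)$ to $(D,0)$ with $\gcd(V,D)=1$ --- is never established, and you say so yourself (``this is the step I expect to be the main obstacle \dots where the real work lies''). Worse, the heuristic you offer for the single-segment shape does not follow from the two facts you cite. Knowing that the specialization $a_2=\dots=a_{j-1}=0$ can only raise $v_{a_j}$-valuations of coefficients tells you nothing about whether the intermediate coefficients of $\Phi_{k,p}$ \emph{already} lie on or above the segment joining $(0,V)$ to $(D,0)$; there is no reason they should, because the irreducibility of $\Phi_{k,p}$ (Theorem \ref{irrc}) is driven by a $p$-adic Eisenstein mechanism at the rational prime $p$, not by a geometric valuation along a coordinate hyperplane in parameter space. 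In addition, raising valuations could just as well raise $V$ itself, destroying your identification of the constant-term vertex, and the coprimality $\gcd(V,D)=1$ is a further unverified arithmetic condition (with $D$ close to a power of $p$, it amounts to $p\nmid V$, which must be checked uniformly in $k$ and $j$). So what you have is a plausible research plan whose decisive computation is missing, plus a fallback (monodromy of the projection) that is likewise only named, not executed.

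For contrast, the paper's proof (Theorem \ref{inbetween}) avoids any new Newton-polygon computation. Working in the critical-point normal form, the mixed-critical slice is the identification $\alpha_1=\alpha_2=\dots=\alpha_j$, and the specialized polynomial $h_{k,1,p}^{j}$ inherits the generalised Eisenstein structure of $h_{k,1,p}$ with respect to $\beta-\alpha_1$ at the prime $p$ (the mod-$p$ reduction is still a power of $\beta-\alpha_1$, and the resultant condition survives), so irreducibility over $\Q$ comes essentially for free from Theorem \ref{irrq}. The passage from $\Q$ to $\C$ then uses Lemma \ref{Eisen2}: as long as $j<p-1$, the lowest-degree homogeneous part of $h_{k,1,p}^{j}$ retains at least one \emph{simple} linear factor defined over $\Q$ (namely $\beta+j\alpha_1+\alpha_{j+1}+\dots+\alpha_{p-2}$), and the Galois-orbit argument of Theorem \ref{irrc} forces any $\C$-factor to be rational. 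Your slice $V_j=\{a_2=\dots=a_{j-1}=0\}$ in the coefficient normal form is the correct locus, but by discarding the paper's critical-point coordinates you also discard the two structures (the $p$-adic Eisenstein datum and the factored lowest-degree homogeneous part) that make the specialization argument go through without further computation.
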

Theorem \ref{theorem3} has been restated and proved under Theorem \ref{inbetween}, in section \ref{mixcritical}. As a corollary of Theorem \ref{theorem3}, we get (Corollary \ref{mixedcoro}),
\begin{corollary}
    Let $k\geq 0, p$ be an odd prime and $2\leq j \leq p-1$. The set of all points of $\mathcal{M}_{p}$ for which the marked critical point is strictly $(k,1)$-preperiodic and has ramification index $j$, is an irreducible quasi-affine variety of dimension $p-j$.
\end{corollary}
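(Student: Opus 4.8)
The plan is to read the corollary off from Theorem \ref{inbetween} by a purely topological argument, with no new dynamical input. For $2 \leq i \leq p$, let $X_i \subseteq \mathcal{M}_{p}$ denote the set of conjugacy classes of degree $p$ polynomials whose marked critical point is strictly $(k,1)$-preperiodic and has ramification index $\geq i$; thus $X_2 \supseteq X_3 \supseteq \cdots \supseteq X_p$, with $X_2 = \Sigma_{k,1,p}$, with $X_p$ the classical unicritical parameter set of strictly $(k,1)$-preperiodic points, and with the set named in the corollary --- those points at which the marked critical point has ramification index exactly $j$ --- equal to $W_j := X_j \setminus X_{j+1}$. By Theorem \ref{irrc} for $i = 2$ and by Theorem \ref{inbetween} for $2 < i < p$, each such $X_i$ is an irreducible quasi-affine subvariety of $\mathcal{M}_{p}$, and I will take from the proof of Theorem \ref{inbetween} (and, for $i=2$, from the proof of Theorem \ref{irrc}) that $\dim X_i = p - i$; the set $X_p$ is finite and nonempty (nonemptiness, e.g., by Theorem \ref{uni}), hence $\dim X_p = 0 = p - p$ as well.

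The one thing to verify is that, for each $2 \leq j \leq p-1$, the set $X_{j+1}$ is a proper Zariski-closed subset of $X_j$. For closedness, note that the ramification index of the marked critical point is Zariski upper-semicontinuous on $\mathcal{M}_{p}$: the condition "ramification index $\geq j+1$" is precisely that the marked point be a common zero of $f', f'', \dots, f^{(j)}$, which is closed; so $X_{j+1} = X_j \cap \{\,\text{ramification index} \geq j+1\,\}$ is closed in $X_j$. For properness, $\dim X_{j+1} = p-(j+1) < p-j = \dim X_j$, and since $X_j$ is irreducible and nonempty this forces $X_{j+1} \subsetneq X_j$, indeed $X_{j+1}$ is nowhere dense in $X_j$. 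Hence $W_j = X_j \setminus X_{j+1}$ is a nonempty Zariski-open subset of the irreducible quasi-affine variety $X_j$. A nonempty open subset of an irreducible space is irreducible and dense, a Zariski-open subset of a quasi-affine variety is again quasi-affine, and dimension is unchanged on passing to a dense open subset; so $W_j$ is an irreducible quasi-affine variety of dimension $\dim X_j = p-j$, which is the assertion.

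There is no genuine obstacle: the content of the corollary sits in Theorem \ref{inbetween}, and the deduction above is formal. The points that deserve a line of care are the upper-semicontinuity of the ramification index (immediate from its description via vanishing of successive derivatives of $f$), the input dimension formula $\dim X_i = p-i$ (which should be recorded alongside Theorem \ref{inbetween} and Theorem \ref{irrc}), and the two boundary situations. When $j = p-1$, the subset $X_{j+1} = X_p$ is the $0$-dimensional unicritical locus rather than a variety supplied by Theorem \ref{inbetween}, but it is still a proper closed subset of the curve $X_{p-1}$ simply because it is finite. When $p = 3$, the only admissible value is $j = 2$, which is handled by Theorem \ref{irrc} (Theorem \ref{inbetween} assumes $p>3$), with $X_3$ again the finite unicritical locus.
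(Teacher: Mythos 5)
Your proof is correct and follows essentially the same route as the paper: the paper deduces the corollary from the chain $\Sigma_{k,1,p} \supset \Sigma_{2} \supset \cdots \supset \Sigma_{p-2} \supset \{\text{unicritical points}\}$ of irreducible quasi-affine sets of strictly decreasing dimension, observing that the locus of exact ramification index $j$ is a nonempty Zariski-open subset of the corresponding irreducible set, exactly as you do with your $W_j = X_j \setminus X_{j+1}$. The only quibble is your side remark that $X_p$ is nonempty ``by Theorem \ref{uni}'' --- that theorem gives constancy or irreducibility of $R_{k,1,p}$, not nonemptiness of its root set (and indeed $X_p = \emptyset$ when $k=1$, since $R_{1,1,p}$ is constant) --- but this is immaterial, since properness of $X_p$ in $X_{p-1}$ follows from finiteness alone.
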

Theorem \ref{theorem3} provides evidence for the following conjecture (Conjecture \ref{mixedconj1}):
\begin{conjecture}\label{mixconj}
    Fix $k\geq 0,\;n>0,\; d>2,\; 2 < j < d$. The family of conjugacy classes of degree $d$ polynomials for which the marked critical point is strictly $(k,n)$-preperiodic, and has ramification index $\geq j$, is an irreducible quasi-affine subvariety of $\mathcal{M}_{d}$.
\end{conjecture}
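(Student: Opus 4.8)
The plan is to realize the stratum as an explicit hypersurface and to induct on the preperiod $k$, keeping the construction independent of the unicritical arithmetic (in the spirit of Theorem \ref{theorem1}) so that the period and the degree can be treated separately. Conjugating the marked critical point to $0$ and normalizing, every degree-$d$ map with ramification index $\geq j$ there is
\[
 f(x) = c + b_j x^j + b_{j+1}x^{j+1} + \dots + b_{d-1}x^{d-1} + x^{d},
\]
so the family carrying no dynamical constraint is the quotient of the affine space with coordinates $(c,b_j,\dots,b_{d-1})$ by the residual $\Z/(d-1)$ of scaling conjugacies; call this irreducible ambient variety $\mathcal{P}_j$, of dimension $d-j+1$. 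When $n=1$ one may trade the coordinate $c$ for the value $w$ of the terminal fixed point via $f(w)=w$, after which $f$ is determined by $(w,b_j,\dots,b_{d-1})$ and the strictly $(k,1)$-preperiodic stratum becomes the single hypersurface $\{\,f^{k}(0)=w\,\}$ in this affine space, with the strictness conditions ($z_{k-1}\neq w$, no earlier return) removing a proper closed subset. The whole task is then to prove this hypersurface is irreducible, and to do the analogous thing for $n>1$.

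For general $n$ I would replace the fixed point by an $n$-cycle $w_1,\dots,w_n$ and record exact period by the $n$-th dynatomic polynomial $\Phi_{f,n}$, forming the incidence variety of pairs $(f\in\mathcal{P}_j,\text{ cycle } + \text{ orbit portrait of type }(k,n))$. Because a strictly $(k,n)$-preperiodic map determines its critical orbit and terminal cycle canonically, the forgetful map from this incidence variety to the genuine stratum is birational, so it suffices to prove the incidence variety irreducible. Its defining data are the cycle equations $f(w_i)=w_{i+1}$, the exact-period condition $\Phi_{f,n}(w_1)=0$, and the landing relation $f^{k}(0)=w_1$; the strictness removes the loci of smaller period dividing $n$ and of preperiod below $k$, which is what makes the result quasi-affine rather than affine.

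The engine of the proof would be an induction on the preperiod $k$, which is geometric and does not invoke any single-Galois-orbit input. The inductive step writes $z_k=f^{k-1}(0)$'s successor through $f(z_{k-1})=z_k$ and adjoins one preimage at a time: irreducibility of the type-$(k,n)$ incidence variety should follow from irreducibility of the type-$(k-1,n)$ one together with the fact that adjoining the new preimage equation is, generically over the already-constructed irreducible variety, an irreducible extension of the relevant function field. The base case $k=0$ is the strictly periodic stratum, where $0$ itself lies on the $n$-cycle; for $n=1$ and prime $d$ the full induction is exactly Theorem \ref{theorem3}, while for $n>1$ the strictly periodic base case is essentially open, settled in the unicritical cubic situation by Arfeux--Kiwi but not in the generality needed here.

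I expect two steps to be the real obstacles, one for each direction of generalization. The primality of $d$ is what keeps the function-field extension at each inductive step irreducible: with $d$ prime the degree of the governing one-variable polynomial is pinned and cannot split along an intermediate divisor of $d$, whereas for composite $d$ the dynatomic and preimage polynomials factor according to the divisor lattice of $d$ and the extension can genuinely become reducible, so a new device --- presumably a monodromy computation showing the relevant braid or Hurwitz monodromy group still acts transitively --- is required to replace the primality shortcut. The second obstacle is the strictly periodic base case for $n>1$: one must both establish irreducibility of the period-$n$ stratum of $\mathcal{P}_j$ directly and verify, by a clean dimension count, that subtracting all proper-period substrata leaves a single top-dimensional component. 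Controlling this interaction between the dynatomic factorization and the strictness conditions is, I believe, the technical heart of any complete proof, and is exactly why the statement remains a conjecture rather than a theorem.
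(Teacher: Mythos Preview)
The statement you are addressing is Conjecture~\ref{mixconj}, and the paper does \emph{not} prove it; it is stated as an open conjecture, with Theorem~\ref{theorem3} (equivalently Theorem~\ref{inbetween}) establishing only the special case $n=1$, $d=p$ an odd prime. So there is no paper proof to compare against, and your final paragraph correctly recognizes that the general statement remains open. What you have written is a strategy outline with explicitly flagged gaps, not a proof, and should be read as such.

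That said, it is worth contrasting your proposed route with the method the paper uses in the cases it does settle. The paper's argument for $\Sigma_j$ when $n=1$ and $d=p$ is entirely arithmetic: one specializes the polynomial $h_{k,1,p}$ by setting $\alpha_1=\cdots=\alpha_j$, observes that the resulting polynomial $h_{k,1,p}^{j}$ is still $p$-Eisenstein with respect to $\beta-\alpha_1$ (hence irreducible over $\Q$), and then upgrades to irreducibility over $\C$ by noting that the lowest-degree homogeneous part retains a simple $\Q$-rational linear factor. There is no induction on $k$, no incidence variety, and no function-field extension argument. Your approach is genuinely different: a geometric induction on the preperiod, with irreducibility propagated through one-preimage extensions and the base case being the strictly periodic locus. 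This has the conceptual advantage of separating the roles of $k$, $n$, and $d$, but it trades the paper's concrete Eisenstein computation for two hard inputs you correctly isolate: transitivity of monodromy on preimages when $d$ is composite, and irreducibility of the period-$n$ base for $n>1$. Both are open in the required generality (the periodic base case is exactly Arfeux--Kiwi for cubics, and nothing comparable exists for higher $d$ or for the mixed-critical strata), so your outline does not close the conjecture, nor do you claim it does.

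One technical caution on your inductive step: even granting irreducibility at level $k-1$, the claim that adjoining a single preimage gives an irreducible function-field extension is not automatic. The preimage polynomial $f(x)-z_{k-1}$ has degree $d$ but, over the generic point of the level-$(k-1)$ variety, its Galois group need not act transitively once the ramification-index constraint $\operatorname{ord}_0 f \geq j$ is imposed; the marked critical point at $0$ forces a factor of $x^{j}$ when $z_{k-1}$ specializes to the critical value, and controlling how this degeneration interacts with the generic factorization is precisely where primality of $d$ enters in the paper's argument and where your monodromy substitute would have to do real work.
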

Observe that, allowing $j$ to be $2$ in Conjecture \ref{mixconj}, we obtain Conjecture \ref{conj}. Similarly in Conjecture \ref{mixconj}, allowing $j$ to be $d$ and restricting the irreducibility to only over $\Q$, we arrive at Conjecture \ref{uniconj}. Thus, Conjecture \ref{mixconj}, along with Theorem \ref{theorem3}, provides a bridge between Conjectures \ref{conj} and \ref{uniconj}.

We continue this study of irreducibility for $\Sigma_{k,2,3}, k \geq 0$ curves in a follow up article 

\subsection{Sketch of Proof.} \subsubsection{Normal form.} To study the families mentioned in Theorem \ref{theorem1}, we first require a concrete description of $\mathcal{M}_{d}$ as an affine variety. By Thurston's work, the dynamical behaviour of any polynomial is dictated by the dynamical behaviour of its critical points. This suggests representing polynomials by their finite critical points and one finite critical value, modulo affine conjugation. We introduce a normal form for monic, reduced degree $d\geq 2$ polynomials, which is a modification of \emph{Branner-Hubbard normal form} appearing in (\cite{article2}). This normal form also generalizes the unicritical normal form $f(X)= X^{d}+c$  used in (\cite{Milnor+2014+15+24}, \cite{HT15}, \cite{10.7169/facm/1799}, \cite{G20}, \cite{BGok22}, \cite{BG23}) and each of the various other normal forms considered in (\cite{article2}, \cite{inbook}, \cite{LQ07}, \cite{FG15}). 

This has two applications. First, we use this normal form to express $\mathcal{M}_{d}$ explicitly as an affine variety, $(d-1)$-dimensional affine space quotiented by the action of the group of $(d-1)$th roots of unity, and a permutation group. Secondly, we use the normal form to construct polynomials $h_{k,n,d},~ k\geq 0, n>0, d>2,$ such that the families $\Sigma_{k,n,d}$ appearing in Conjecture \ref{conj} are Zariski dense in the algebraic sets of these polynomials in $\mathcal{M}_{d}$.

\subsubsection{Proof of Theorem \ref{theorem1}: Irreducibility of $\Sigma_{k,1,p}$.} Let $k\geq 0, p$ be an odd prime. To show the irreducibility of $\Sigma_{k,1,p}$, it suffices to show that the polynomials $h_{k,1,p}$ are irreducible over $\C$. 

The proof of irreducibility uses arithmetical methods. We first observe that the coefficients of these polynomials lie in the ring $\Z_{(p)}$, the localization of the rational integers $\Z$ at the prime $p$. We show that the polynomials $h_{k,1,p}$ are generalised Eisenstein-type polynomials with respect to the prime $p$. For the resultant condition of the generalised Eisenstein criterion, we need a weaker version of Thurston's rigidity, of which an elementary algebraic proof is given in Subsection \ref{lemmasthurs}. This shows that $h_{k,1,p}$ polynomials are irreducible over $\Q$.

To prove irreducibility over the complex numbers, we observe that the lowest degree homogeneous part of $h_{k,1,p}$ factorizes into linear factors over $\Q$, for any $k\geq 2,\; p$ an odd prime. As $h_{k,1,p}$ polynomials are irreducible over $\Q$, this forces their irreducibility over $\C$, for any $k\geq 2$. The case $k=0$ is a trivial case, as $h_{0,1,p}$ is linear. For $k=1$, a little modification of the above proof shows that $h_{1,1,p}$ is also irreducible over $\C$. 

\subsubsection{Proof of Theorem \ref{theorem3}: Mixed critical case.} The variables of the polynomials $h_{k,1,p}$ stand for the critical points and one critical value. The polynomials defining the mixed critical locus arise by identifying some of the variables (critical points). The resulting polynomials are once again generalised Eisenstein polynomials, and hence are irreducible over $\Q$. As long as we do not identify all the critical points,  the lowest degree homogeneous part of the resulting polynomials mentioned above always has at least one simple linear factor defined over $\Q$. This enforces their irreducibility over $\C$, completing the proof of irreducibility of all the mixed critical loci. 

\subsubsection{Proof of Theorem \ref{theorem2}: Unicritical case.} Identifying all the finite critical points to $0$, we reach at the unicritical case. The resulting polynomial we get from $h_{k,1,p}$ is a univariate polynomial, which is an Eisenstein-type polynomial multiplied with some power of the variable $\beta$ (critical value). Throwing away this power of $\beta$, we see that the roots of these Eisentstein-type polynomials are the values of $c$ mentioned in Conjecture \ref{uniconj}. This allows us to complete the proof of Theorem \ref{theorem2}.

\subsection{Outline of the paper.} Section \ref{prelim} contains construction of normal form, moduli space $\mathcal{M}_{d}$ and the polynomials $h_{k,n,d}$. All the notations are collected in Section \ref{notations}. We gather some basic lemmas and tools, such as generalised Eisenstein criterion, in Section \ref{lemmas}. A short algebraic proof of a weaker version of Thurston's rigidity theorem is also included in Section \ref{lemmas}. Proof of Theorem \ref{theorem1} is the content of Section \ref{t'''}. In Sections \ref{mixcritical} and \ref{unicritical}, we prove Theorem \ref{theorem3} (mixed critical case), and Theorem \ref{theorem2} (unicritical case), respectively. Obstructions to direct generalization of our techniques to some other cases of Conjecture \ref{conj} are studied in Sections \ref{obs1} and \ref{obs2}.

\subsection{Acknowledgement.} The author would like to thank C. S. Rajan, Sagar Shrivastava and Manodeep Raha, for their valuable inputs and many discussions. The author expresses his gratitude to Ashoka University for providing a productive workspace, where this work was done. The author is also thankful to Kerala School of Mathematics for his visit, where he had many helpful discussions with Plawan Das, Subham Sarkar and M. M. Radhika.

\section{Preliminaries}\label{prelim}
Let $f$ be a polynomial over $\C$. Let $f^{m}, m\in \N$, denote the iteration $f^{m}:=f\circ f^{m-1}$, and $f^{0}$ is defined to be the identity map. For any point $x\in \C$, the \emph{forward orbit} of $x$ is defined to be the set, $\{x,f(x),f^{2}(x),\cdot \cdot \cdot,f^{m}(x),\cdot \cdot \cdot\}=\{f^{m}(x)|m\in \N\cup \{0\}\}$. Recall that the roots of the derivative $f'$ of $f$, are called the \emph{finite critical points} or \emph{ramified points} of $f$. Since the time of Fatou and Julia, it has been known that behaviour of a complex dynamical system (which means forward orbits of $x$, where $x$ varies over $\C$) is largely governed by the forward orbits of the critical points of the map. Hence, in order to understand the dynamical system generated by $f$, one needs to study the forward orbits of critical points of $f$.
\subsection{Periodic and preperiodic points} Let $g$ be a polynomial defined over $\C$. Let $k \geq 0,n >0$ be integers. A point $x\in \C$ is called a \emph{periodic point of period $n$} (or, \emph{$n$-periodic point}) if and only if $g^{n}(x)=x$, and called \emph{strictly $n$-periodic point} (or, \emph{periodic point of exact period} $n$) if and only if $n$ is the smallest natural number such that $g^{n}(x)=x$.

 A point $x\in \C$ is called a $(k,n)$-\emph{preperiodic point} if and only if $g^{k+n}(x)=g^{k}(x)$, and \emph{strictly $(k,n)$-preperiodic point} if and only if $g^{k+n}(x)=g^{k}(x)$ and for all $0\leq l\leq k,\; 0\leq m\leq n$, $(l,m)\neq (k,n)\in \Z^{2} $, we have $g^{l+m}(x)\neq g^{l}(x)$.
\subsection{Affine conjugation} Consider $Aut(\C)=\{h\in \C[z]~|~h(z)=az+b, a\in \C\backslash \{0\}, b\in \C\}$, the group of holomorphic automorphisms of $\C$. Two maps $g,\tilde{g}\in \C[z]$ are called \emph{affine conjugate} to each other if and only if there exists a map $h\in Aut(\C)$, such that $\tilde{g}=h\circ g\circ h^{-1}$.
\subsection{Normal form and moduli spaces of polynomials with a marked critical point.} For an arbitrary but fixed integer $d>2$, let $S_{d}$ be the set of all polynomials of degree $d$ with a marked critical point. Consider the \emph{moduli space} $\mathcal{M}_{d}$, which is defined to be the quotient space of $S_{d}$, obtained by identifying polynomials that are affine conjugate to each other and the affine conjugation map sends the marked critical point of the first to the marked critical point of the latter. We say a polynomial is \emph{reduced} if the sum of its roots is zero, which is also equivalent to saying that the coefficient of the second highest degree term of the polynomial is zero. Any polynomial over $\C$ is affine conjugate to a monic reduced polynomial over $\C$. Thus, the moduli space $\mathcal{M}_{d}$ is the set of equivalence classes of monic reduced polynomials of degree $d$ over $\C$ with a marked critical point, where the equivalence relation is given by affine conjugation.

Let $f$ be a monic, reduced, degree $d$ polynomial. We define \emph{normal form} of $f$ to be a form of the polynomial $f$ in which each coefficient of $f$ is expressed as polynomials in its finite critical points and one finite critical value. Note that, any polynomial of degree $d$ has $(d-1)$ number of finite critical points, counted with multiplicity. 
\begin{proposition}\label{propnormal}
    Let $f$ be a monic and reduced element of $\C[z]$ of degree $d$. Let $\alpha_{1},\alpha_{2},\cdot \cdot \cdot,\alpha_{d-1}$ be the finite critical points of $f$, not necessarily distinct. Let $f(\alpha_{1})= \beta,\; \bar{\alpha}:= (\alpha_{1},\alpha_{2},\cdot \cdot \cdot,\alpha_{d-1})$ and $s_{i}(\bar{\alpha})$ denote the $i$-th elementary symmetric polynomial in $\alpha_{1},\cdot \cdot \cdot,\alpha_{d-1}$. Then, the polynomial $f(z)$ can be written in a normal form as,
    $$ z^{d} +  \sum_{i=2}^{d-1} (-1)^{i}\frac{d}{d-i}\cdot s_{i}(\bar{\alpha})\cdot z^{d-i} - \alpha_{1}^{d} - \sum_{i=2}^{d-1}(-1)^{i}\frac{d}{d-i}\cdot s_{i}(\bar{\alpha})\cdot \alpha_{1}^{d-i} + \beta,$$
    with $\sum_{i=1}^{d-1} \alpha_{i}=0$.
\end{proposition}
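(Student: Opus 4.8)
The plan is to reconstruct $f$ from its derivative. Since $f$ is monic of degree $d$, the derivative $f'$ is a polynomial of degree $d-1$ with leading coefficient $d$ whose roots, counted with multiplicity, are exactly $\alpha_{1},\dots,\alpha_{d-1}$; hence
$$f'(z) \;=\; d\prod_{i=1}^{d-1}(z-\alpha_{i}) \;=\; d\sum_{i=0}^{d-1}(-1)^{i}\,s_{i}(\bar{\alpha})\,z^{d-1-i},$$
with the convention $s_{0}(\bar{\alpha})=1$. From this I would first read off the reducedness condition: the coefficient of $z^{d-2}$ in $f'$ equals $-d\,s_{1}(\bar{\alpha})$, while it also equals $(d-1)$ times the coefficient of $z^{d-1}$ in $f$, which is $0$ because $f$ is reduced. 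Therefore $s_{1}(\bar{\alpha})=\sum_{i}\alpha_{i}=0$, which is the asserted side condition.

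Next, since we are in characteristic zero, each integer $d-i$ for $2\le i\le d-1$ is nonzero, so I can antidifferentiate $f'$ term by term to obtain
$$f(z) \;=\; \sum_{i=0}^{d-1}(-1)^{i}\frac{d}{d-i}\,s_{i}(\bar{\alpha})\,z^{d-i} \;+\; C$$
for a unique constant $C\in\C$ (antidifferentiation being the only ambiguity). The $i=0$ term is $z^{d}$, and the $i=1$ term vanishes by the previous step, so
$$f(z) \;=\; z^{d} + \sum_{i=2}^{d-1}(-1)^{i}\frac{d}{d-i}\,s_{i}(\bar{\alpha})\,z^{d-i} + C.$$

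Finally I would pin down $C$ using the marked critical value: evaluating the last display at $z=\alpha_{1}$ and using $f(\alpha_{1})=\beta$ gives
$$C \;=\; \beta - \alpha_{1}^{d} - \sum_{i=2}^{d-1}(-1)^{i}\frac{d}{d-i}\,s_{i}(\bar{\alpha})\,\alpha_{1}^{d-i},$$
and substituting this back yields precisely the claimed normal form. I do not expect a genuine obstacle: the argument is just "$f$ is determined by $f'$ up to an additive constant, and the constant is fixed by one critical value," so the only steps needing (routine) care are the sign and index bookkeeping in expanding $\prod_{i}(z-\alpha_{i})$ and the verification that the $i=0$ and $i=1$ terms behave as stated.
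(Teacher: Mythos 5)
Your proposal is correct and follows essentially the same route as the paper: factor $f'$ over its critical points, deduce $\sum_i\alpha_i=0$ from reducedness, antidifferentiate, and fix the constant of integration via $f(\alpha_1)=\beta$. The only cosmetic differences are that you identify the leading coefficient $d$ of $f'$ from monicity at the outset (the paper carries an undetermined constant $c$ and fixes it after integrating) and you obtain $s_1(\bar\alpha)=0$ by comparing coefficients rather than by noting that $f$ reduced implies $f'$ reduced; these are equivalent.
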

\begin{proof}
The derivative of $f$ can be written as,
\begin{center}
    $f'(z)=c(z-\alpha_{1})(z-\alpha_{2})\cdot \cdot \cdot(z-\alpha_{d-1}),$
\end{center}
where $c\in \C,c$ constant.
Since $f$ is reduced, $f'$ is also reduced. Hence, $\sum_{i=1}^{d-1} \alpha_{i} = 0$. So,
\begin{equation}\label{a}
     f'(z)=c\left( z^{d-1} + \sum_{i=2}^{d-1} (-1)^{i} s_{i}(\bar{\alpha})z^{d-i-1} \right),
\end{equation}
 where $\bar{\alpha}=(\alpha_{1},\alpha_{2},\cdot \cdot \cdot,\alpha_{d-1})$ and $s_{i}(\bar{\alpha})$ is the $i$-th elementary symmetric polynomial in $\alpha_{1},\alpha_{2},\cdot \cdot \cdot,\alpha_{d-1}$. By integration,
\begin{equation*}
    f(z)= c\left( \frac{z^{d}}{d} + \sum_{i=2}^{d-1}(-1)^{i}\frac{s_{i}(\bar{\alpha})}{d-i}z^{d-i} \right) +c',
\end{equation*}
where $c'\in \C$, $c'$ constant. The polynomial $f$ being monic, we get $c=d$. Hence,
\begin{equation*}
    f(z)= z^{d} + \sum_{i=2}^{d-1}(-1)^{i}\frac{d}{d-i}\cdot s_{i}(\bar{\alpha})\cdot z^{d-i} +c'.
\end{equation*}
Equating $f(\alpha_{1})$ with $\beta$, 
 \begin{equation*}
     \beta=f(\alpha_{1})= \alpha_{1}^{d} + \sum_{i=2}^{d-1}(-1)^{i}\frac{d}{d-i}\cdot s_{i}(\bar{\alpha})\cdot \alpha_{1}^{d-i} +c',
 \end{equation*}
 \begin{equation*}
     \implies c'= - \alpha_{1}^{d} - \sum_{i=2}^{d-1}(-1)^{i}\frac{d}{d-i}\cdot s_{i}(\bar{\alpha})\cdot \alpha_{1}^{d-i} + \beta.
 \end{equation*}
Hence,
\begin{equation}\label{b}
    f(z)= z^{d} + \sum_{i=2}^{d-1}(-1)^{i}\frac{d}{d-i}\cdot s_{i}(\bar{\alpha})\cdot z^{d-i} - \alpha_{1}^{d} - \sum_{i=2}^{d-1}(-1)^{i}\frac{d}{d-i}\cdot s_{i}(\bar{\alpha})\cdot \alpha_{1}^{d-i} + \beta.
\end{equation}
\end{proof}
In this article, we will represent each point of $\mathcal{M}_{d}$ by a monic reduced polynomial written in the normal form as in Equation \eqref{b}, with $\alpha_{1}$ as its marked critical point.
\begin{remark}
    The normal form mentioned above is a modification of the \emph{Branner-Hubbard normal form} appearing in \cite{article2}. This also generalises various other normal forms considered in \cite{inbook}, \cite{LQ07}, \cite{FG15}, \cite{Milnor+2014+15+24}.
\end{remark}
\subsection{Moduli space $\mathcal{M}_{d}$ is an affine variety}
As $f$ varies over all monic reduced polynomials over $\C$ of degree $d$, $\alpha_{i}, 1\leq i \leq p-1,$ and $\beta$ vary over $\C$, such that $\sum_{i=1}^{d-1} \alpha_{i}=0$. So, we can parametrize the set of all monic, reduced polynomials over $\C$ of degree $d$ with a marked critical point, using $\alpha_{1}, \alpha_{2},\cdot \cdot \cdot,\alpha_{d-2}$ as $(d-2)$ finite critical points with $\alpha_{1}$ as its marked critical point and $\beta$ such that $f(\alpha_{1})=\beta$. From here on in this article, we will consider $\alpha_{1},\alpha_{2},\cdot \cdot \cdot,\alpha_{d-2}, \beta$ as variables over $\C$, unless otherwise mentioned. For the rest of this subsection, we will denote the polynomial $f$ in Equation \eqref{b} as $f_{\bar{\alpha},\beta}$, where $\bar{\alpha}=(\alpha_{1},\alpha_{2},\cdot \cdot \cdot,\alpha_{d-1})$. 

Following the previous subsection and the above paragraph, $\mathcal{M}_{d}$ can be considered as a quotient space of $\C^{d-1}$, quotiented by affine conjugation relation, where the coordinates of $\C^{d-1}$ are $\alpha_{1},\alpha_{2},\cdot \cdot \cdot,\alpha_{d-2},\beta$. Next, we will show that $\mathcal{M}_{d}$ is an affine variety.
\begin{proposition}\label{affvar}
    For any $d\in \N,\; d > 2$, the space $\mathcal{M}_{d}$ is an affine variety.
\end{proposition}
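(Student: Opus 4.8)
The plan is to exhibit $\mathcal{M}_d$ explicitly as a geometric quotient of the affine space $\C^{d-1}$ (with coordinates $\alpha_1,\dots,\alpha_{d-2},\beta$) by a finite group action, and then invoke the standard fact that a quotient of an affine variety by a finite group is again an affine variety. So the first step is to identify precisely which affine conjugations preserve the normal form of Equation \eqref{b}. If $f_{\bar\alpha,\beta}$ is monic, reduced, with marked critical point $\alpha_1$, and $h(z)=az+b$ conjugates it to another monic reduced polynomial with the conjugate of $\alpha_1$ still marked, then monicity forces $a^{d-1}=1$ and reducedness forces $b=0$. Thus the only affine conjugations in play are the scalings $z\mapsto \zeta z$ with $\zeta^{d-1}=1$. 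Under such a scaling the critical points transform as $\alpha_i\mapsto \zeta^{-1}\alpha_i$ (or $\zeta\alpha_i$, depending on convention) and the critical value $\beta=f(\alpha_1)$ transforms as $\beta\mapsto \zeta^{d}\beta=\zeta\beta$ (since $\zeta^{d-1}=1$). I would record this as a linear action of $\mu_{d-1}$ on $\C^{d-1}$.

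Next I would account for the marking: the normal form singles out $\alpha_1$ as the marked critical point, but the remaining critical points $\alpha_2,\dots,\alpha_{d-1}$ (recall $\alpha_{d-1}=-\sum_{i=1}^{d-2}\alpha_i$ is determined) carry no ordering in the moduli problem, so two tuples that differ by a permutation of the unmarked critical points give the same point of $\mathcal{M}_d$. Hence there is an additional action of the symmetric group $S_{d-2}$ permuting $\alpha_2,\dots,\alpha_{d-1}$ — realized as an action on $\C^{d-1}$ once we solve $\alpha_{d-1}=-\alpha_1-\dots-\alpha_{d-2}$, i.e. $S_{d-2}$ acts on the coordinates $\alpha_2,\dots,\alpha_{d-2},\beta$ by permuting the $\alpha_j$'s among themselves and the last "virtual" coordinate $\alpha_{d-1}$. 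Combining, I get an action of the finite group $G:=\mu_{d-1}\rtimes S_{d-2}$ (the two actions commute, or at worst form a semidirect product, since scaling and permutation of the $\alpha_i$'s are compatible) on $\C^{d-1}$, and I would check that $\mathcal{M}_d$, as defined via affine conjugation respecting the marking, is exactly the set of $G$-orbits.

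With that identification in hand, the conclusion is immediate: $\C^{d-1}=\operatorname{Spec}\C[\alpha_1,\dots,\alpha_{d-2},\beta]$ is affine, $G$ is finite, and the quotient $\C^{d-1}/G=\operatorname{Spec}\big(\C[\alpha_1,\dots,\alpha_{d-2},\beta]^{G}\big)$ is an affine variety by the standard theorem on invariants of finite group actions on affine varieties (the invariant ring is a finitely generated $\C$-algebra, and it is an integral domain since $\C^{d-1}$ is irreducible, so the quotient is an irreducible affine variety). I would then note that this quotient is the geometric quotient, so its closed points are in bijection with $G$-orbits, hence with $\mathcal{M}_d$.

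The main obstacle — and the only place real care is needed — is the bookkeeping of step one and two: verifying that the group of affine conjugations preserving the normal form and the marking is \emph{exactly} $\mu_{d-1}\rtimes S_{d-2}$ (no more, no less), in particular that the reducedness normalization genuinely kills the translation part $b$ and that monicity pins the multiplier $a$ down to a $(d-1)$st root of unity, and then that the induced action on the parameters $(\alpha_1,\dots,\alpha_{d-2},\beta)$ is well-defined and linear after eliminating $\alpha_{d-1}$. Once the group action is correctly pinned down, invoking finiteness of invariants finishes the proof with essentially no further work; I would not expect to need anything beyond classical invariant theory here.
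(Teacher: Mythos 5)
Your proposal is correct and follows essentially the same route as the paper: identify the residual affine conjugations of the normal form as the scalings by $(d-1)$th roots of unity, add the permutations of the unmarked critical points, and realize $\mathcal{M}_d$ as the quotient of $\C^{d-1}$ (the paper works with the isomorphic hyperplane $V=\{\sum\alpha_i=0\}\subset\C^d$) by the resulting finite group $\langle P,\mu_{d-1}\rangle$, which is affine by finiteness of invariants. The only wobble is your transformation rule $\alpha_i\mapsto\zeta^{-1}\alpha_i$, $\beta\mapsto\zeta\beta$ — with a fixed convention both scale by the same root of unity, as in the paper's $f_{\zeta^{-1}\bar\alpha,\zeta^{-1}\beta}$ — but this does not affect finiteness of the group or the conclusion.
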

\begin{proof}
Consider the affine space $\C^{d}$ with $\alpha_{1},\cdot \cdot \cdot,\alpha_{d-1},\beta$ as coordinates. Consider the hyperplane $V$ which is the zero set of $\sum_{i=1}^{d-1} \alpha_{i}$ in $\C^{d}$. We have an affine isomorphism,
\begin{equation}\label{firstmap}
    \C^{d-1} \rightarrow V,\;\;\; (\alpha_{1},\cdot \cdot \cdot,\alpha_{d-2},\beta) \mapsto (\alpha_{1},\cdot \cdot \cdot,\alpha_{d-2},-\sum_{i=1}^{d-2} \alpha_{i}, \beta).
\end{equation}
As we have seen above, we have a surjective map,
\begin{equation}\label{secondmap}
    V \twoheadrightarrow \mathcal{M}_{d},\;\;\; \; (\alpha_{1},\alpha_{2},\cdot \cdot \cdot,\alpha_{d-2},\alpha_{d-1},\beta) \mapsto f_{\bar{\alpha},\beta}.
\end{equation}
Now, two points $(\alpha_{1},\alpha_{2},\cdot \cdot \cdot,\alpha_{d-2},\alpha_{d-1},\beta)$ and $(\alpha'_{1},\alpha'_{2},\cdot \cdot \cdot,\alpha'_{d-2},\alpha'_{d-1},\beta')$ in $V$ maps to the same point in $\mathcal{M}_{d}$ iff there is an affine conjugation map $h$ such that $h\circ f_{\bar{\alpha},\beta}\circ h^{-1} = f_{\bar{\alpha'}, \beta'}$ and $h(\alpha_{1})=\alpha'_{1}$.

If $h$ is the identity map, then $f_{\bar{\alpha},\beta}=f_{\bar{\alpha'},\beta'}, \alpha_{1}=\alpha'_{1}$ and $\beta=\beta'$. Hence, the ordered set $\{\alpha'_{2},\cdot \cdot \cdot,\alpha'_{d-2},\alpha'_{d-1}\}$ is a permutation of $\{\alpha_{2},\cdot \cdot \cdot,\alpha_{d-2},\alpha_{d-1}\}$, and every such permutation will map to the same point in $\mathcal{M}_{d}$.

Let $h$ be a non-identity map. As $f_{\bar{\alpha},\beta},f_{\bar{\alpha'},\beta'}$ are both reduced, monic and of degree $d$, the affine conjugation map between them has to be $h(z)=\zeta \cdot z$, for some $(d-1)$th root of unity $\zeta$. A brief computation shows that $h^{-1}\circ f_{\bar{\alpha},\beta} \circ h = f_{\zeta^{-1}\cdot \bar{\alpha},\zeta^{-1}\cdot \beta}$.  

Consider the following two finite subgroups of $GL_{d}(\C)$, 
\begin{equation*}
    \mu_{d-1}:= \{\zeta I\;|\; I \text{ is the identity matrix}, \zeta \text{ is a } (d-1)\text{th root of unity}\}.
\end{equation*}
\begin{equation*}
    P:= \text{Group of all } d\times d \text{ permutation matrices which fix the first and last coordinate.}
\end{equation*}
The hyperplane $V$ is closed under the action of both $P$ and $\mu_{d-1}$. From Equation \eqref{secondmap} and last four paragraphs, we get that there is a bijective map, 
\begin{equation}\label{thirdmap}
    V/\langle P,\mu_{d-1} \rangle\;\;\; \longleftrightarrow \;\;\; \mathcal{M}_{d}.
\end{equation}
As $\mu_{d-1}$ and $P$ commute with each other, the group $\langle P,\mu_{d-1} \rangle$ is a finite group. So, the quotient space $V/\langle P,\mu_{d-1} \rangle$ is an affine variety. Putting Equations \eqref{firstmap}, \eqref{thirdmap} together,
\begin{equation}\label{finalmap}
    \C^{d-1}\;\; \xrightarrow{\simeq} \;\; V \;\; \twoheadrightarrow \;\; V/\langle P,\mu_{d-1} \rangle\;\; \longleftrightarrow \;\;\mathcal{M}_{d},
\end{equation}
we see that $\mathcal{M}_{d}$ is an affine variety.
\end{proof}

\subsection{The reduced polynomial condition: $\sum_{i=1}^{d-1} \alpha_{i} = 0$}
The polynomial $f$ in Equation \eqref{b} lies in $\Q[z,\alpha_{1},\alpha_{2},\cdot \cdot \cdot,\alpha_{d-1},\beta]$. In the later sections, we would want to apply the relation $\sum_{i=1}^{d-1} \alpha_{i} = 0$ on the polynomial $f$ and the multivariate polynomials we form using $f$. Here we will study a ring homomorphism between multivariate polynomial rings whose kernel is the ideal generated by $\sum_{i=1}^{d-1} \alpha_{i}$.

Fix an integer $d>2$. Let $R$ be a subring of $\C$ containing $1$. Consider the ring homomorphism,
\begin{equation} \label{f'}
    \Psi_{d,R}: R[z,\alpha_{1},\alpha_{2},\cdot \cdot \cdot,\alpha_{d-1},\beta] \longrightarrow R[z,\alpha_{1},\alpha_{2},\cdot \cdot \cdot,\alpha_{d-2},\beta],
\end{equation}
which is identity map on $R[z,\alpha_{1},\alpha_{2},\cdot \cdot \cdot,\alpha_{d-2},\beta]$ and maps $\alpha_{d-1}$ to $-(\sum_{i=1}^{d-2} \alpha_{i})$. Kernel of this map is the ideal generated by $\sum_{i=1}^{d-1} \alpha_{i}$ in $R[z,\alpha_{1},\alpha_{2},\cdot \cdot \cdot,\alpha_{d-1},\beta]$. For any $f \in R[z,\alpha_{1},\alpha_{2},\cdot \cdot \cdot,\alpha_{d-1},\beta]$, we denote the element $\Psi_{d,R}(f)$ as $\hat{f}$.

For two polynomials $g,h\in R[z,\alpha_{1},\alpha_{2},\cdot \cdot \cdot,\alpha_{d-1},\beta]$, we define $\circ$ to be the composition with respect to $z$, i.e. $g\circ h:= g(h(z,\alpha_{1},\cdot \cdot \cdot,\alpha_{d-1},\beta), \alpha_{1},\cdot \cdot \cdot,\alpha_{d-1},\beta)$ and $\hat{g} \circ \hat{h}:= \hat{g}(\hat{h}(z,\alpha_{1},\cdot \cdot \cdot,\alpha_{d-2},\beta),\alpha_{1},\cdot \cdot \cdot,\alpha_{d-2},\beta)$. Note that, $\Psi_{d,R}$ being a ring homomorphism,
\begin{equation}\label{hatcommcomp}
    \widehat{g \circ h}= \hat{g} \circ \hat{h}.
\end{equation}

\subsection{The polynomials $h_{k,n,d}$}
In this subsection, we will form a polynomial $h_{k,n,d}$ in $\Q[\alpha_{1},\alpha_{2},\cdot \cdot \cdot,\alpha_{d-2},\beta]$. Then we will show that to prove irreducibility of $\Sigma_{k,n,d}$ in $\mathcal{M}_{d}$, it is enough to show that $h_{k,n,d}$ is irreducible over $\C$.

From Equation \eqref{b}, let 
\begin{equation*}
    f(z)= z^{d} + \sum_{i=2}^{d-1}(-1)^{i}\frac{d}{d-i}\cdot s_{i}(\bar{\alpha})\cdot z^{d-i} - \alpha_{1}^{d} - \sum_{i=2}^{d-1}(-1)^{i}\frac{d}{d-i}\cdot s_{i}(\bar{\alpha})\cdot \alpha_{1}^{d-i} + \beta,
\end{equation*}
 with $\sum_{i=1}^{d-1} \alpha_{i} = 0$. Let us assume that $\alpha_{1}$ is strictly $(k,n)$-preperiodic under $f$. Then, we get $f^{k+n}(\alpha_{1})-f^{k}(\alpha_{1})=0$. From the definition of $f$, the polynomial
\begin{equation*}
    f_{k,n,d}:= f^{k+n}(\alpha_{1})-f^{k}(\alpha_{1}),
\end{equation*}
 lies in $\Q[\alpha_{1},\alpha_{2},\cdot \cdot \cdot,\alpha_{d-1},\beta]$. Consider $ \widehat{f_{k,n,d}} $ and $ \hat{f}_{k,n,d} := (\hat{f})^{k+n}(\alpha_{1}) - (\hat{f})^{k}(\alpha_{1})$, where $\; \hat{} \;$ is as defined in the previous subsection. From Equation \eqref{hatcommcomp}, we get
 \begin{equation}\label{fhatcommcomp}
     \widehat{f_{k,n,d}} =  \hat{f}_{k,n,d}.
 \end{equation}
 The set of solutions of the polynomial $\hat{f}_{k,n,d}$ in $\C^{d-1}$ contains all the points in $\C^{d-1}$ for which $\alpha_{1}$ is (not necessarily strictly) $(k,n)$-preperiodic under $\hat{f}$. Consider $\hat{f}_{l,m,d}$, defined similarly as $\hat{f}_{k,n,d}$, for any non-negative integers $l,m$ such that $l\leq k, m|n, (l,m)\neq (k,n) \in \Z^{2}$. We will show in Lemma \ref{l} that for any such $(l,m)$, the polynomial $\hat{f}_{l,m,d}$ divides $\hat{f}_{k,n,d}$ in $\Q[\alpha_{1},\alpha_{2},\cdot \cdot \cdot,\alpha_{d-2},\beta]$.

Let,
\begin{equation}\label{g'}
    h_{k,n,d}:= \frac{\hat{f}_{k,n,d}}{\prod_{i} g_{i}^{a_{i}}} =  \frac{\widehat{f_{k,n,d}}}{\prod_{i} g_{i}^{a_{i}}},
\end{equation}
where $g_{i}$ varies over all irreducible factors of $\hat{f}_{l,m,d}= \widehat{f_{l,m,d}}$ in $\Q[\alpha_{1},\alpha_{2},\cdot \cdot \cdot,\alpha_{d-2},\beta]$, for all $0\leq l \leq k, 1\leq m \leq n, m|n, (l,m) \neq (k,n) \in \Z^{2}$, and $a_{i}$ is the highest power of $g_{i}$ that divides $\widehat{f_{k,n,d}}$. 

\begin{lemma}\label{h}
Let $k\in \N \cup \{0\},n,d\in \N, d>2$. Let us assume that $h_{k,n,d}$ is irreducible over $\C$. Then, the set of points of the form $(\alpha_{1},\alpha_{2},\cdot \cdot \cdot,\alpha_{d-2},\beta)$ in $\C^{d-1}$ for which $\alpha_{1}$ is strictly $(k,n)$-preperiodic under $\hat{f}$, is open and Zariski dense in the algebraic set of $h_{k,n,d}$. 
\end{lemma}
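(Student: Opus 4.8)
The plan is to describe the locus $T$ of the statement --- the set of $(\alpha_{1},\dots,\alpha_{d-2},\beta)\in\C^{d-1}$ for which $\alpha_{1}$ is strictly $(k,n)$-preperiodic under $\hat{f}$ --- explicitly as the hypersurface $Z(h_{k,n,d})$ with a Zariski-closed subset removed; here and below $Z(g)\subseteq\C^{d-1}$ denotes the zero set of a polynomial $g$. First I would unwind the definition \eqref{g'} with the help of Lemma \ref{l}. A point $p$ lies in $Z(\hat{f}_{l,m,d})$ exactly when $(\hat{f})^{l+m}(\alpha_{1})=(\hat{f})^{l}(\alpha_{1})$ at $p$, i.e.\ when $\alpha_{1}$ is $(l,m)$-preperiodic (not necessarily strictly) under $\hat{f}$. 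By Lemma \ref{l}, $\hat{f}_{l,m,d}\mid\hat{f}_{k,n,d}$ for every \emph{admissible} pair $(l,m)$, meaning $0\le l\le k$, $1\le m\le n$, $m\mid n$, $(l,m)\ne(k,n)$; hence every irreducible factor $g_{i}$ occurring in \eqref{g'} divides $\hat{f}_{k,n,d}$ with $a_{i}\ge 1$. Since the $g_{i}$ are pairwise coprime, $\hat{f}_{k,n,d}=h_{k,n,d}\cdot\prod_{i}g_{i}^{a_{i}}$ in $\Q[\alpha_{1},\dots,\alpha_{d-2},\beta]$, and as $a_{i}$ is the exact power of $g_{i}$ dividing $\hat{f}_{k,n,d}$ we also get $g_{i}\nmid h_{k,n,d}$. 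In particular
\[
Z(\hat{f}_{k,n,d})\;=\;Z(h_{k,n,d})\,\cup\,\bigcup_{i}Z(g_{i}),
\qquad
\bigcup_{i}Z(g_{i})\;=\;\bigcup_{(l,m)\ \mathrm{admissible}}Z(\hat{f}_{l,m,d}).
\]

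Next I would show that $T=Z(\hat{f}_{k,n,d})\setminus\bigcup_{i}Z(g_{i})$. The inclusion ``$\subseteq$'' is immediate from the definition of strictness: if $\alpha_{1}$ is strictly $(k,n)$-preperiodic at $p$ then $\hat{f}_{k,n,d}(p)=0$, while $(\hat{f})^{l+m}(\alpha_{1})\ne(\hat{f})^{l}(\alpha_{1})$ at $p$ for every admissible $(l,m)$, so $\hat{f}_{l,m,d}(p)\ne 0$ and hence $g_{i}(p)\ne 0$ for all $i$. For ``$\supseteq$'', let $p\in Z(\hat{f}_{k,n,d})\setminus\bigcup_{i}Z(g_{i})$, so $\alpha_{1}$ is $(k,n)$-preperiodic at $p$; if it were not strictly so, the elementary orbit structure of a preperiodic point --- its first periodic iterate is reached after some $t\le k$ steps and has exact period $s$ with $s\mid n$, and $(t,s)\ne(k,n)$ --- provides an admissible pair $(t,s)$ with $\hat{f}_{t,s,d}(p)=0$, hence $g_{i}(p)=0$ for some $i$, a contradiction. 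Combining with the first paragraph, $T=Z(h_{k,n,d})\setminus\bigcup_{i}Z(g_{i})$, which is Zariski-open in $Z(h_{k,n,d})$.

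It remains to see that $T$ is dense in $Z(h_{k,n,d})$. As $h_{k,n,d}$ is irreducible over the algebraically closed field $\C$, it is non-constant, $Z(h_{k,n,d})$ is a nonempty irreducible hypersurface, and its Zariski-open subset $T$ is dense as soon as $T\ne\emptyset$, i.e.\ as soon as $Z(h_{k,n,d})\not\subseteq\bigcup_{i}Z(g_{i})$. If that inclusion held, irreducibility would force $Z(h_{k,n,d})\subseteq Z(g_{i})$ for a single $i$, whence $h_{k,n,d}\mid g_{i}$ in $\C[\alpha_{1},\dots,\alpha_{d-2},\beta]$; but divisibility of one $\Q$-polynomial by another is detected by linear algebra over $\Q$, hence is unchanged by the base extension $\Q\subseteq\C$, so $h_{k,n,d}\mid g_{i}$ already over $\Q$. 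Since $h_{k,n,d}$ is irreducible over $\Q$ (being irreducible even over $\C$) and $g_{i}$ is irreducible over $\Q$, the two would be associates, contradicting $g_{i}\nmid h_{k,n,d}$. Thus $T$ is open and dense in $Z(h_{k,n,d})$, as claimed.

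The crux of the argument is the identification $T=Z(h_{k,n,d})\setminus\bigcup_{i}Z(g_{i})$ in the second step: it rests on Lemma \ref{l} and on a careful bookkeeping of preperiods and periods, to make sure that strict $(k,n)$-preperiodicity at $p$ amounts \emph{precisely} to $h_{k,n,d}(p)=0$ together with the non-vanishing of all the lower factors $g_{i}$ at $p$. Once that is in place, the hypothesis that $h_{k,n,d}$ is irreducible over $\C$ does the rest essentially formally.
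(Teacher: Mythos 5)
Your proposal is correct and takes essentially the same route as the paper: both identify the strictly preperiodic locus as $V(h_{k,n,d})$ minus the zero sets of the $g_{i}$, and both deduce nonemptiness from the coprimality of $h_{k,n,d}$ with each $g_{i}$ (which you justify more explicitly via the Nullstellensatz and descent of divisibility from $\C$ to $\Q$) and density from the irreducibility hypothesis. The only difference is that you spell out the orbit-structure bookkeeping for the reverse inclusion, which the paper asserts without detail.
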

\begin{proof}
Let us denote the algebraic set of $h_{k,n,d}$ in $\C^{d-1}$ as $V(h_{k,n,d})$. Any point $(\alpha_{1},\alpha_{2},\cdot \cdot \cdot,\alpha_{d-2},\beta) \in \C^{d-1}$ for which $\alpha_{1}$ is strictly $(k,n)$-preperiodic under $\hat{f}$, is a solution to the equation $\hat{f}_{k,n,d}$ but not a solution of any of the $g_{i}$'s in Equation \eqref{g'}. Hence, all such points lie in $V(h_{k,n,d})$. Set of all these points are precisely the set $U:=V(h_{k,n,d})\setminus \cup_{g_{i}} V(h_{k,n,d},g_{i})$, where $g_{i}$ varies over all irreducible factors of $\hat{f}_{l,m,d},$ for all $0\leq l\leq k, 1\leq m\leq n, m|n, (l,m)\neq (k,n)\in \Z^{2}$. By definition of $h_{k,n,d}$, the polynomial $h_{k,n,d}$ is coprime to $g_{i}$ over $\Q$, for every $g_{i}$. Hence, they are coprime over $\C$ too. So, $U$ is open and non-empty. As $h_{k,n,d}$ is irreducible, $U$ is Zariski dense in $V(h_{k,n,d})$.
\end{proof}

\begin{remark}
    Observe that, the proof of Lemma~\ref{h} shows that $\Sigma_{k,n,d}$ is non-empty for arbitrary $k,n,d$.
\end{remark}

Let us define $\Sigma_{k,n,d}$ to be the set of all points in $\mathcal{M}_{d}$ for which the marked critical point is strictly $(k,n)$-preperiodic. From Lemma \ref{h}, we get the following corollary, 
\begin{corollary} \label{coro1}
    If $h_{k,n,d}$ is irreducible over $\C$, then $\Sigma_{k,n,d}$ is an irreducible quasi-affine subvariety of $\mathcal{M}_{d}$.\hspace{240pt} \qedsymbol
\end{corollary}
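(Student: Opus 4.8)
The plan is to push the conclusion of Lemma~\ref{h} forward along the finite quotient map of Proposition~\ref{affvar}. Write $G:=\langle P,\mu_{d-1}\rangle$ for the finite group appearing there, $\pi\colon \C^{d-1}\xrightarrow{\simeq} V\twoheadrightarrow V/G\longleftrightarrow \mathcal{M}_d$ for the composite of \eqref{finalmap}, and $U\subseteq\C^{d-1}$ for the locus of points $(\alpha_1,\dots,\alpha_{d-2},\beta)$ at which $\alpha_1$ is strictly $(k,n)$-preperiodic under $\hat f$, as in Lemma~\ref{h}. The first observation is that, since being strictly $(k,n)$-preperiodic is an affine-conjugacy invariant and the generators of $G$ act on $V$ by affine conjugations fixing the marked critical point (the scalings $z\mapsto\zeta z$ for $\mu_{d-1}$, and permutations of the unmarked critical points for $P$), a class in $\mathcal{M}_d$ lies in $\Sigma_{k,n,d}$ precisely when one — equivalently every — of its representatives in $V$ lies in $U$. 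Hence $\pi^{-1}(\Sigma_{k,n,d})=U$ and $\pi(U)=\Sigma_{k,n,d}$; in particular $U$ is $G$-invariant.

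Next I would dispose of the source side. As $h_{k,n,d}$ is irreducible over $\C$, it is in particular nonconstant, so $V(h_{k,n,d})\subseteq\C^{d-1}$ is an irreducible affine hypersurface, and Lemma~\ref{h} says $U$ is a nonempty Zariski-open dense subset of it; thus $U$ is irreducible. Irreducibility of $\Sigma_{k,n,d}$ is then immediate, being the image $\pi(U)$ of an irreducible space under a continuous map.

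The remaining point is that $\Sigma_{k,n,d}$ is quasi-affine, i.e.\ locally closed in the affine variety $\mathcal{M}_d$. Here I would exploit that $\pi$, as the quotient by the finite group $G$, is both closed (finite morphisms are closed) and a topological quotient map. Set $\widetilde W:=\bigcup_{g\in G} g\cdot V(h_{k,n,d})$, a closed $G$-invariant subset of $\C^{d-1}$, and $Z:=\pi(V(h_{k,n,d}))=\widetilde W/G$, which is then closed in $\mathcal{M}_d$. Since $U$ is $G$-invariant and open in $V(h_{k,n,d})$, it is in fact open in $\widetilde W$: writing $U=O\cap V(h_{k,n,d})$ with $O\subseteq\C^{d-1}$ open, $G$-invariance forces $u\in g\cdot V(h_{k,n,d})$ for \emph{every} $g\in G$ whenever $u\in U$, so $O^\ast:=\bigcap_{g\in G} g\cdot O$ is an open set with $U=O^\ast\cap\widetilde W$. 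Thus $U$ is locally closed in $\C^{d-1}$, and $\bigl(\pi|_{\widetilde W}\bigr)^{-1}(\pi(U))=U$ being open forces $\pi(U)=\Sigma_{k,n,d}$ to be open in $Z$, hence locally closed in $\mathcal{M}_d$. This completes the proof.

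The only real obstacle I foresee is the bookkeeping around the finite-group quotient: one must carefully track whether each ``open'' or ``closed'' assertion is relative to $\C^{d-1}$, to $\widetilde W$, to $Z$, or to $\mathcal{M}_d$, and in particular verify that $U$ descends to a genuinely locally closed — not merely constructible — subset of $\mathcal{M}_d$. Everything beyond this is formal manipulation with the Zariski and quotient topologies.
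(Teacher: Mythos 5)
Your proposal is correct and follows essentially the same route as the paper: Lemma~\ref{h} gives that the strict-preperiodicity locus $U$ is nonempty, open and dense in the irreducible hypersurface $V(h_{k,n,d})$, hence irreducible, and $\Sigma_{k,n,d}$ is its image under the finite quotient map of Proposition~\ref{affvar}, so it is irreducible as the continuous image of an irreducible set. The only difference is that you spell out the $G$-invariance of $U$ and the local closedness of $\pi(U)$ in $\mathcal{M}_d$, details the paper leaves implicit in the remark following the corollary; this is a welcome tightening but not a different argument.
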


\begin{remark}
Our first goal in this article is to prove the irreducibility of $\Sigma_{k,1,p}, k\geq 0, p$ odd prime. Let us summarize our method. Consider the diagram below. Let $\C[d]:= \C[\alpha_{1}, \alpha_{2},\cdot \cdot \cdot, \alpha_{d}, \beta],$ for any $d\in \N$. The polynomial $f_{k,n,d} \in \C[d-1]$ kills all $(\alpha_{1}, \alpha_{2}, \cdot \cdot \cdot, \alpha_{d-1}, \beta)$ for which $\alpha_{1}$ is (not necessarily strictly) $(k,n)$-preperiodic under $f$ as in Equation \eqref{b}. We take the image of $f_{k,n,d}$ under $\; \hat{} \;$ map. From $\hat{f}_{k,n,d} \in \C[d-2]$, we factor out all common irreducible factors of $\hat{f}_{k,n,d}$ and $\hat{f}_{l,m,d}$, $0\leq l\leq k, 1\leq m\leq n, m|n, (l,m)\neq (k,n) \in \Z^{2}$, with each factor raised to the highest power that divides $\hat{f}_{k,n,d}$. We call the resulting polynomial $h_{k,n,d} \in \C[d-2]$. Assuming $h_{k,n,d}$ is irreducible, we get that $V(h_{k,n,d})\subseteq \C^{d-1}$ is irreducible. The algebraic set $V(h_{k,n,d})$ is irreducible implies the set $S$ of all points of $\C^{d-1}$ for which $\alpha_{1}$ is strictly $(k,n)$-preperiodic, is non-empty and open in $V(h_{k,n,d})$. Hence, $S$ is irreducible. So the image of $S$ under the horizontal sequence of arrows in the diagram is irreducible, which is $\Sigma_{k,n,d}$.

\begin{tikzcd}
   & \C[d-1] \arrow[ld, "\widehat{}"] \arrow[d, twoheadrightarrow] &  & \\
   \C[d-2] \arrow[d, dashed, leftrightarrow] & \sfrac{\C[d-1]}{(\sum_{i=1}^{d-1} \alpha_{i})} \arrow[l, "\simeq"] \arrow[d, dashed, leftrightarrow] &  & \\
   \C^{d-1} \arrow[r, "\simeq"] & V = V(\sum_{i=1}^{d-1} \alpha_{i}) \arrow[r, twoheadrightarrow] & V/\langle P,\mu_{d-1} \rangle \arrow[r, leftrightarrow] & \mathcal{M}_{d}\\
\end{tikzcd}

In Section \ref{t'''}, we will show that $h_{k,1,p}$ polynomials are irreducible over $\C$, for any $k\geq 0$ and odd prime $p$. By Corollary \ref{coro1}, this shows that $\Sigma_{k,1,p}, k\geq 0, p$ odd prime, is irreducible, completing the proof of Theorem \ref{theorem1} stated in the introduction.
\end{remark}

\section{Notations}\label{notations}
 We will use the following notations for the rest of the article. Let $R$ be a ring and $d\in \N$.
 \begin{itemize}
     \item $R[d]:= R[\alpha_{1}, \alpha_{2}, \cdot \cdot \cdot, \alpha_{d}, \beta],$ the multivariate polynomial ring over $R$ in variables $\alpha_{1}, \alpha_{2},\cdot \cdot \cdot,\alpha_{d}, \beta$.
 \end{itemize}
 Let $d>2$. Let $g,h$ be elements of $R[d-1].$
\begin{itemize}
    \item By $\hat{g}$, we denote the image of $g$ under the map $\Psi_{d}: R[d-1] \rightarrow R[d-2]$, that is the identity map on $R[\alpha_{1},\alpha_{2},\cdot \cdot \cdot,\alpha_{d-2},\beta]$ and maps $\alpha_{d-1}$ to $-(\sum_{i=1}^{d-2} \alpha_{i})$.
    \item By saying $g$ is monic in $R[\alpha_{1},\alpha_{2},\cdot \cdot \cdot,\alpha_{d-1}][\beta]$, we mean $g$ is monic as a polynomial in $\beta$ over the ring $R[\alpha_{1},\alpha_{2},\cdot \cdot \cdot,\alpha_{d-1}]$.
    \item By $\mbox{Res}(g,h)$, we denote the resultant of $g$ and $h$, both considered as polynomials in $\beta$ with coefficients coming from the integral domain $R[\alpha_{1},\alpha_{2},\cdot \cdot \cdot,\alpha_{d-1}]$. So, $\mbox{Res}(g,h)\in R[\alpha_{1},\alpha_{2},\cdot \cdot \cdot,\alpha_{d-1}]$.
\end{itemize}
Consider the polynomial $f$ as defined in Equation \eqref{b}. For any non-negative integers $k,n$, with $n>0$,  
\begin{itemize}
    \item $f^{0}:=$ identity map, $f^{1}:=f, f^{n}:=f^{n-1} \circ f, $ for all $ n\in \N$.
    \item $f'$ denote the derivative of $f$ w.r.t $z$.
    \item $f_{k,n,d}=f_{k,n,d}(\alpha_{1},\alpha_{2},\cdot \cdot \cdot,\alpha_{d-1},\beta):= f^{k+n}(\alpha_{1})-f^{k}(\alpha_{1})$.
    \item $\hat{f}_{k,n,d}= \widehat{f_{k,n,d}} := \Psi_{d,\C}(f_{k,n,d}),$ where $\Psi_{d,\C}$ is as defined in Equation \eqref{f'}. 
    \item $h_{k,n,d}=h_{k,n,d}(\alpha_{1},\alpha_{2},\cdot \cdot \cdot,\alpha_{d-1},\beta):= \hat{f}_{k,n,d}/\prod_{i} g_{i}^{a_{i}}$, where $g_{i}$'s vary over all distinct irreducible factors of $\hat{f}_{l,m,d}$, where $l\leq k, m|n, (l,m)\neq (k,n)\in \Z^{2}$, and for each $i$, $a_{i}$ is the highest power of $g_{i}$ that divides $\hat{f}_{k,n,d}$ over $\Q$.
    \item $\C^{n}:=$ complex affine space of dimension $n$.
    \item $\mathcal{M}_{d}:=$ the moduli space of degree $d$ polynomials with a marked critical point.
    \item $\Sigma_{k,n,d}:=$ the set of all points of $\mathcal{M}_{d}$ for which the marked critical point is strictly $(k,n)$-preperiodic.
    \item $\Phi_{n}(z,w):= \frac{z^{n}-w^{n}}{z-w}$, for all $n\in \N$.
    \item $G_{\Q}$ denote the absolute Galois group of $\Q$.
\end{itemize}
\section{Basic lemmas and Tools}\label{lemmas}
\subsection{Divisibility properties of $\hat{f}_{k,n,d}$}

Let $d$ be a natural number, $d>2$. In this subsection, we will study some divisibility properties of $\hat{f}_{k,n,d}$, for non-negative integers $k,n$, with $n>0$.
\begin{lemma}\label{l}
Let $k,l \in \N \cup \{0\},n,m \in \N$ such that $ l\leq k$ and $m|n$, the polynomial $f_{l,m,d}$ divides $f_{k,n,d}$ in $\Q[d-1]$. Hence, $\hat{f}_{l,m,d}$ divides $\hat{f}_{k,n,d}$ in $\Q[d-2]$.
\end{lemma}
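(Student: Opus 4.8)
The plan is to reduce everything to an elementary statement about iterates of a single polynomial map and then invoke the ring homomorphism $\Psi_{d,\Q}$. First I would fix the polynomial $f \in \Q[d-1]$ from Equation \eqref{b} and work in the ring $\Q[\alpha_1,\dots,\alpha_{d-1}][z]$, treating $f$ as a one-variable polynomial in $z$ over the coefficient domain $A := \Q[\alpha_1,\dots,\alpha_{d-1}]$. The key observation is the purely formal identity: for any polynomial $F(z) \in A[z]$ and any integers $a \le b$, the difference $F^{b}(z) - F^{a}(z)$ is divisible in $A[z]$ by $F^{b-a}(z) - z$ evaluated appropriately — more precisely, $F^{a+c}(z) - F^{a}(z) = G(F^{a}(z))$ where $G(w) = F^{c}(w) - w$, and then $G(w)$ is divisible by $F^{c}(w) - w$ trivially; the real content is that $F^{c}(w) - w$ divides $F^{mc}(w) - w$ whenever $m \in \N$. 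This last fact is the classical telescoping identity
\[
 F^{mc}(w) - w = \sum_{j=0}^{m-1}\bigl(F^{(j+1)c}(w) - F^{jc}(w)\bigr) = \sum_{j=0}^{m-1}\bigl(F^{c}(F^{jc}(w)) - F^{jc}(w)\bigr),
\]
and since $w' \mapsto F^{c}(w') - w'$ vanishes on $w' = w$ modulo $F^{c}(w) - w$ (substituting $w' = F^{jc}(w)$, each summand is divisible by $F^{c}(w)-w$ because $F^{jc}(w) \equiv w$, hence $F^{c}(F^{jc}(w)) \equiv F^{c}(w) \equiv w \equiv F^{jc}(w) \pmod{F^{c}(w)-w}$). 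So $F^{c}(w)-w \mid F^{mc}(w)-w$ in $A[w]$.

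With that in hand, I would assemble the divisibility for $f_{l,m,d} \mid f_{k,n,d}$ as follows. Write $n = m t$ and $k = l + s$ with $t \in \N$, $s \ge 0$ (using $l \le k$ and $m \mid n$). Then $f_{k,n,d} = f^{k+n}(\alpha_1) - f^{k}(\alpha_1) = \bigl(f^{n}\bigr)\!\bigl(f^{k}(\alpha_1)\bigr) - f^{k}(\alpha_1)$, which equals $H(f^{l}(\alpha_1))$ where $H(w) := f^{n}(f^{s}(w)) - f^{s}(w)$. Setting $G(w) := f^{m}(w) - w$, the identity above (applied with $F = f$, $c = m$, and composed with the substitution $w \mapsto f^{s}(w)$) gives that $G(w) \mid f^{n+s}(w) - f^{s}(w) = H(w)$ in $A[w]$, hence $f_{l,m,d} = G(f^{l}(\alpha_1)) \mid H(f^{l}(\alpha_1)) = f_{k,n,d}$ in $A[z] = \Q[d-1]$ after the specialization $z \mapsto \alpha_1$ (which is an $A$-algebra map, so preserves divisibility). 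The second assertion is then immediate: $\Psi_{d,\Q}: \Q[d-1] \to \Q[d-2]$ is a ring homomorphism, so applying it to a divisibility relation $f_{l,m,d} \cdot q = f_{k,n,d}$ yields $\hat f_{l,m,d} \cdot \hat q = \hat f_{k,n,d}$, giving $\hat f_{l,m,d} \mid \hat f_{k,n,d}$ in $\Q[d-2]$, using Equation \eqref{hatcommcomp} to identify $\widehat{f_{k,n,d}}$ with the expression built from $\hat f$.

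The main obstacle — really the only place requiring care — is getting the bookkeeping of the indices $l, m, k, n, s, t$ exactly right, and confirming that the two reductions (first "peel off $f^{l}$ from the outside", then "use periodicity of period $m$ dividing $n$") compose correctly; one must be careful that $f^{k+n}(\alpha_1) - f^{k}(\alpha_1) = G(f^{l}(\cdot))$ composed in the right order, since $f^{k+n} = f^{n} \circ f^{s} \circ f^{l}$. Everything else is formal manipulation in a polynomial ring over an integral domain, and no analytic or dynamical input is needed. I would present the one-variable telescoping identity as a short sub-claim, then deduce the lemma in two lines.
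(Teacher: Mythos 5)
Your proof is correct and follows essentially the same route as the paper's: both arguments rest on the telescoping sum over the period combined with the divisibility $(a-b)\mid\bigl(P(a)-P(b)\bigr)$ for a polynomial $P$, and both transfer the conclusion to the hatted polynomials via the ring homomorphism $\Psi_{d,\Q}$. The only slip is that your coefficient ring $A$ should be $\Q[\alpha_{1},\dots,\alpha_{d-1},\beta]=\Q[d-1]$ rather than $\Q[\alpha_{1},\dots,\alpha_{d-1}]$, since $f$ involves $\beta$; this does not affect the argument.
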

\begin{proof}
To prove the lemma, it is enough to show that $f_{k-1,n,d}$ divides $f_{k,n,d}$ and $f_{k,m,d}$ divides $f_{k,n,d}$ in $\Q[d-1]$, for any $k,n,m\in \N,$ such that $m|n$.

Consider the polynomial $f$ in Equation \eqref{b} as a polynomial in a single variable, $z$, over the integral domain $\Q[d-1]$. Observe that, $(z-w)$ divides $f(z)-f(w)$. Putting $z=f^{k+n-1}(\alpha_{1})$, $w=f^{k-1}(\alpha_{1})$, one gets that $f_{k-1,n,d}$ divides $f_{k,n,d}$ in $\Q[d-1]$.

Hence, for any non-negative integers $l,k,n,$ such that $l\leq k, n>0$, the polynomial $f_{l,n,d}$ divides $f_{k,n,d}$ in $\Q[d-1]$. Let $n=mt, t\in \N$. Then,
\begin{align*}
    f_{k,n,d}& =f_{k,mt,d}\\
    & =f_{k+m(t-1),m,d} + f_{k+m(t-2),m,d} + \cdot \cdot \cdot + f_{k,m,d}\\
    & = \sum_{i=0}^{t-1} f_{k+mi,m,d}.
\end{align*}
As $f_{k,m,d}$ divides $f_{k+mi,m,d}, $ for $i \geq 0$, the polynomial $f_{k,m,d}$ divides $f_{k,n,d}$, whenever $m$ divides $n$. The lemma is proved.  
\end{proof}
\begin{lemma}\label{k}
 Let $g$ be an irreducible element of $\Q[d-2]$, monic as a polynomial in  $\Q[\alpha_{1},\alpha_{2},\cdot \cdot \cdot,\alpha_{d-2}][\beta]$. Let $k,l,m,n$ be non-negative integers with $m,n$ non-zero, $l\leq k, $ g.c.d $(m,n)=r$. If $g$ divides both $\hat{f}_{k,n,d}$ and $\hat{f}_{l,m,d}$ in $\Q[d-2]$, then $g$ divides $\widehat{f_{l,r,d}}$ in $\Q[d-2]$.
\end{lemma}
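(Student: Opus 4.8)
The plan is to reduce to an arithmetic statement about the exponents by exploiting the divisibility structure established in Lemma \ref{l} together with the hypothesis that $g$ is irreducible and monic in $\beta$. Since $r = \gcd(m,n)$, by Bézout there exist integers $u,v \geq 0$ with $um - vn = r$ (after shifting by a multiple of $mn$ we may take both nonnegative). I would like to say that $g$ divides $\widehat{f_{l',r,d}}$ for $l'$ large enough and then push $l'$ down to $r$'s companion index using the first part of Lemma \ref{l}; but to get to index $r$ at all I first need to combine the periods $m$ and $n$.

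First I would record the key elementary identity for the forward orbit: for nonnegative integers $a,b$ with $b>0$, one has
\begin{equation*}
    f_{l,a+b,d} = f_{l+a,b,d} + f_{l,a,d},
\end{equation*}
which follows by telescoping $\hat{f}^{\,l+a+b}(\alpha_1) - \hat{f}^{\,l}(\alpha_1)$; this is exactly the manipulation used in the proof of Lemma \ref{l}. Applying $\widehat{\,\cdot\,}$ (a ring homomorphism, by \eqref{hatcommcomp}) this identity descends to $\Q[d-2]$. Now, $g \mid \hat{f}_{k,n,d}$ and $g \mid \hat{f}_{l,m,d}$, and by Lemma \ref{l} (with $l \leq k$, $m \mid m$, etc., after also using $g \mid \hat f_{l,m,d} \Rightarrow g \mid \hat f_{l',m,d}$ for $l' \geq l$ since $f_{l,m,d} \mid f_{l',m,d}$... careful: actually $f_{l,m,d}\mid f_{l',m,d}$ requires $l \le l'$, which is what Lemma \ref{l} gives with the roles arranged correctly) I can arrange that $g$ divides $\hat f_{l'',n,d}$ and $\hat f_{l'',m,d}$ for a common index $l''$. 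Then, iterating the telescoping identity, $g$ divides $\hat f_{l'',\,um,d}$ and $\hat f_{l'',\,vn,d}$ for all $u,v\ge 1$, and also $g \mid \hat f_{l'',\,um - vn + (\text{shift}),\,\cdot}$-type combinations; chasing the identity $f_{l'',um,d} = f_{l''+vn,\,um-vn,d} + f_{l'',vn,d}$ and using $um - vn = r$ together with $g \mid \hat f_{l''+vn,\,\cdot}$ (again by Lemma \ref{l}, since $l'' \le l''+vn$), I conclude $g \mid \widehat{f_{l''+vn,\,r,d}}$. Finally, to bring the preperiodic index down from $l''+vn$ to $l$: here is where I expect to need the monic-in-$\beta$ and irreducibility hypotheses, via the resultant language set up in Section \ref{notations} — the point being that $g \mid \widehat{f_{l_1,r,d}}$ and $g \nmid \widehat{f_{l_1-1,r,d}}$ cannot both persist indefinitely, so the minimal such index is forced to be $\le l$, and then $f_{l,r,d} \mid f_{l_1,r,d}$ closes the argument; more precisely I would show directly that the minimal preperiod index at which $g$ appears must divide into $l$ via the structure of the $f_{\cdot,r,d}$ tower.

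Actually, the cleanest route, which I would pursue instead, is: let $l_0$ be the \emph{least} nonnegative integer such that $g \mid \widehat{f_{l_0,r,d}}$ (this exists by the argument above, which produced \emph{some} such index). I claim $l_0 \le l$. Suppose not. Using $f_{l_0-1,r,d} \mid f_{l_0-1, r,d}$... rather, use that $\widehat{f_{l_0,r,d}} = \widehat{f}^{\,l_0+r}(\alpha_1) - \widehat f^{\,l_0}(\alpha_1)$ and $\widehat{f_{l_0-1,r,d}} = \widehat f^{\,l_0-1+r}(\alpha_1) - \widehat f^{\,l_0 - 1}(\alpha_1)$ are related by composing with $\widehat f$: substituting $z \mapsto \widehat f(z)$ into $\widehat{f_{l_0-1,r,d}}(z,\ldots)$ at $z = \widehat f^{\,l_0-1}(\alpha_1)$ gives $\widehat{f_{l_0,r,d}}$ up to the factor $(z - w)$ structure; so $g \mid \widehat{f_{l_0,r,d}}$ forces, by irreducibility of $g$, that $g$ divides $\widehat{f_{l_0-1,r,d}}$ unless $g$ divides a certain "primitive" factor. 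Carrying this induction down contradicts minimality once $l_0 > 0$ is incompatible with the hypothesis $g \mid \hat f_{l,m,d}$ with $l$ possibly $0$ — I will need to treat the base case $l=0$ (strict preperiodicity vs. periodicity) with a little care. Then $l_0 \le l$ gives $f_{l_0,r,d} \mid f_{l,r,d}$ in $\Q[d-1]$ by Lemma \ref{l}, hence $\widehat{f_{l_0,r,d}} \mid \widehat{f_{l,r,d}}$, and since $g \mid \widehat{f_{l_0,r,d}}$ we get $g \mid \widehat{f_{l,r,d}}$, as desired.

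\textbf{Main obstacle.} The delicate point is the passage from "$g$ divides $\hat f_{k,n,d}$ and $\hat f_{l,m,d}$" to "$g$ divides $\hat f$ at the $\gcd$ period $r$", i.e. decoupling the period from the preperiod: naively combining periods via the telescoping identity only yields divisibility of $\hat f_{l',r,d}$ at some inflated preperiod index $l'$, and I must then descend the preperiod index back down to $l$ without losing track of which irreducible factor is being tracked — this is precisely where the monic-in-$\beta$ hypothesis on $g$ (so that divisibility is stable under the $\widehat{\,\cdot\,}$ specialization and behaves well with respect to $\mathrm{Res}(\cdot,\cdot)$) and the irreducibility of $g$ are essential, and where I expect the bulk of the bookkeeping to lie.
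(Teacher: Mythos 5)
Your first half is sound: combining Lemma \ref{l} with the telescoping identity $f_{l,a+b,d}=f_{l+a,b,d}+f_{l,a,d}$ and a B\'ezout relation $um-vn=r$ does yield $g\mid \widehat{f_{L,r,d}}$ for some inflated preperiod index $L$ (e.g.\ $L=k+vn$). The genuine gap is the descent from $L$ down to $l$, which you correctly identify as the main obstacle but never close. Your proposed mechanism --- factor $\widehat{f_{l_{0},r,d}}=\widehat{f_{l_{0}-1,r,d}}\cdot \widehat{Q}$, where $Q=(f(z)-f(w))/(z-w)$ is evaluated along the orbit, and use irreducibility of $g$ to push divisibility down --- stalls exactly at the alternative you yourself mention: if $l_{0}$ is minimal with $g\mid\widehat{f_{l_{0},r,d}}$ and $l_{0}>l$, then $g$ divides the cofactor $\widehat{Q}$, and nothing in your outline rules this out or converts it into a contradiction with $g\mid\hat{f}_{l,m,d}$. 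The appeal to ``the resultant language'' and the monic-in-$\beta$ hypothesis is not an argument here; showing that $g$ is coprime to such a multiplier-type cofactor is essentially a transversality statement, and it is not available at this point in the paper (the weak Thurston rigidity of Theorem \ref{thurs} concerns a \emph{different} critical point, not consecutive iterates of $\alpha_{1}$).

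The paper sidesteps the descent entirely by arguing pointwise. At any point of $V(g)\subseteq\C^{d-1}$ the marked point is simultaneously $(k,n)$- and $(l,m)$-preperiodic; since $\hat{f}^{k}(\alpha_{1})$ lies on the forward orbit of the periodic point $\hat{f}^{l}(\alpha_{1})$, the two lie on the same cycle, whose exact period therefore divides both $m$ and $n$, hence divides $r$; so the point is $(l,r)$-preperiodic and $V(g)\subseteq V(\hat{f}_{l,r,d})$. The Nullstellensatz then gives $\hat{f}_{l,r,d}\in\mathrm{rad}(g)=(g)$, the last equality because a polynomial irreducible over $\Q$ is squarefree over $\C$. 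The dynamical fact ``same cycle implies same exact period'' is precisely what your purely divisibility-theoretic bookkeeping fails to capture; to salvage your route you should replace the descent by this pointwise orbit argument (or prove the needed coprimality of $g$ with the cofactors, which is harder than the lemma itself).
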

\begin{proof}
Let us consider the algebraic set $V(g)$ in $\C^{d-1}$. Let $(\alpha_{1}^{0},\alpha_{2}^{0},\cdot \cdot \cdot,\alpha_{d-2}^{0},\beta^{0})\in V(g)$. As $g$ divides both $\hat{f}_{k,n,d}$ and $\hat{f}_{l,m,d}$, $\alpha_{1}^{0}$ is both $(k,n)$-preperiodic and $(l,m)$-preperiodic, for any polynomial corresponding to the point $(\alpha_{1}^{0},\alpha_{2}^{0},\cdot \cdot \cdot,\alpha_{d-1}^{0},\beta^{0})$. So, $\alpha_{1}^{0}$ is $(l,r)$-preperiodic. Hence, $V(g)\subseteq V(\hat{f}_{l,r,d})$. So in $\C[d-2]$,
\begin{center}
    $rad(g)=I(V(g))\supseteq I(V(\hat{f}_{l,r,d}))= rad(\hat{f}_{l,r,d})\supseteq (\hat{f}_{l,r,d})$.
\end{center}
Now, $g$ being irreducible in $\Q[d-2]$, $g$ is seperable over $\C$. So, we have $(g)=rad(g)\supseteq (\hat{f}_{l,r,d})$. So, $g$ divides $\hat{f}_{l,r,d}$ in $\C[d-2]$. Hence $g$ divides $\hat{f}_{l,r,d}$ in $\Q[d-2]$, as both $g$ and $\hat{f}_{l,r,d}$ are in $\Q[d-2]$.
\end{proof}
From Lemmas \ref{l} and \ref{k}, one directly obtains the following corollary,
\begin{corollary}
    Let $k,l,m,n$ be non-negative integers with $m,n$ non-zero, $l\leq k,$ g.c.d $(m,n)=r$. Then,  $\hat{f}_{l,r,d}$ divides g.c.d $(\hat{f}_{k,n,d},\hat{f}_{l,m,d})$ in $\Q[d-2]$. Moreover, the radical ideals of the ideal generated by $\hat{f}_{l,r,d}$ and the ideal generated by g.c.d $(\hat{f}_{k,n,d},\hat{f}_{l,m,d})$ in $\C[d-2]$ are same. \hspace{190pt} \qedsymbol
\end{corollary}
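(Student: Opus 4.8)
The statement combines two assertions: a divisibility, $\hat f_{l,r,d}\mid \gcd(\hat f_{k,n,d},\hat f_{l,m,d})$ in $\Q[d-2]$, and an equality of radicals of the two ideals they generate in $\C[d-2]$. The plan is to extract both from Lemmas \ref{l} and \ref{k} with essentially no extra work. For the divisibility: since $r=\gcd(m,n)$ divides $n$, Lemma \ref{l} gives $\hat f_{l,r,d}\mid \hat f_{l,n,d}$, and since $l\le k$ it gives $\hat f_{l,n,d}\mid \hat f_{k,n,d}$; likewise $r\mid m$ gives $\hat f_{l,r,d}\mid \hat f_{l,m,d}$. Hence $\hat f_{l,r,d}$ is a common divisor of $\hat f_{k,n,d}$ and $\hat f_{l,m,d}$ in $\Q[d-2]$, so it divides their gcd. (One should note $\Q[d-2]$ is a UFD, so $\gcd$ makes sense up to units; this is already implicitly used in the definition of $h_{k,n,d}$.)

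For the radical equality, I would argue both inclusions of the corresponding varieties in $\C^{d-1}$ and invoke the Nullstellensatz. One inclusion is immediate from the divisibility just proved: $\hat f_{l,r,d}\mid \gcd(\hat f_{k,n,d},\hat f_{l,m,d})$ implies $V(\gcd(\hat f_{k,n,d},\hat f_{l,m,d}))\subseteq V(\hat f_{l,r,d})$, hence $\operatorname{rad}(\hat f_{l,r,d})\subseteq \operatorname{rad}(\gcd(\hat f_{k,n,d},\hat f_{l,m,d}))$. For the reverse inclusion, take a point $(\alpha_1^0,\dots,\alpha_{d-2}^0,\beta^0)$ of $V(\gcd(\hat f_{k,n,d},\hat f_{l,m,d}))$; since every irreducible (monic in $\beta$) factor $g$ of $\gcd(\hat f_{k,n,d},\hat f_{l,m,d})$ divides both $\hat f_{k,n,d}$ and $\hat f_{l,m,d}$, Lemma \ref{k} forces $g\mid \hat f_{l,r,d}$, so $\gcd(\hat f_{k,n,d},\hat f_{l,m,d})\mid \big(\hat f_{l,r,d}\big)^{N}$ for $N$ large, giving $V(\gcd(\hat f_{k,n,d},\hat f_{l,m,d}))\subseteq V(\hat f_{l,r,d})$ again — in fact this shows the two varieties coincide, hence their radical ideals agree by the Nullstellensatz. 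Alternatively, and more cleanly, one can skip the geometry: Lemma \ref{k} says each irreducible factor of $\gcd(\hat f_{k,n,d},\hat f_{l,m,d})$ already divides $\hat f_{l,r,d}$, and the divisibility from the first part says each irreducible factor of $\hat f_{l,r,d}$ divides $\gcd(\hat f_{k,n,d},\hat f_{l,m,d})$; so $\hat f_{l,r,d}$ and $\gcd(\hat f_{k,n,d},\hat f_{l,m,d})$ have the same set of irreducible factors up to units, which is exactly the statement that their radicals coincide (over $\Q$, hence over $\C$ since these polynomials are separable, being squarefree-radical of products of distinct irreducibles).

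There is essentially no serious obstacle here; the only point requiring a little care is that Lemma \ref{k} is stated for a single irreducible $g$ that is \emph{monic in $\beta$}, so to apply it to every irreducible factor of $\gcd(\hat f_{k,n,d},\hat f_{l,m,d})$ one must first check that this gcd — equivalently $\hat f_{k,n,d}$ and $\hat f_{l,m,d}$ themselves — is monic (up to a unit) as a polynomial in $\beta$ over $\Q[\alpha_1,\dots,\alpha_{d-2}]$, so that all its irreducible factors can be taken monic in $\beta$. This monicity is visible directly from Equation \eqref{b}: iterating $\hat f$ at $\alpha_1$ produces a polynomial in $\beta$ whose top-degree term in $\beta$ has constant coefficient, so $\hat f_{k,n,d}$ is monic in $\beta$; I would record this once and then the corollary follows by combining the two lemmas as above.
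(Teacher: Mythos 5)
Your proposal is correct and follows exactly the route the paper intends: the paper gives no written proof, asserting the corollary follows directly from Lemmas \ref{l} and \ref{k}, and your argument (Lemma \ref{l} for the divisibility, Lemma \ref{k} applied to each irreducible factor of the g.c.d.\ for the reverse containment of radicals) is precisely that deduction. Your added remark that $\hat{f}_{k,n,d}$ is monic in $\beta$, so that Lemma \ref{k} applies to its irreducible factors, is a sensible point of care that the paper leaves implicit.
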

From here on, we will consider $d$ to be an odd prime power. Let $p$ be an odd prime and $e\in \N$, both chosen arbitrarily but fixed. Consider the polynomial expression in Equation $\eqref{b}$ with $d=p^{e}$, 
\begin{equation}\label{i}
    f(z)= z^{p^{e}} + \sum_{i=2}^{p^{e}-1}(-1)^{i}\frac{p^{e}}{p^{e}-i}\cdot s_{i}(\bar{\alpha})\cdot z^{p^{e}-i} - \alpha_{1}^{p^{e}} - \sum_{i=2}^{p^{e}-1}(-1)^{i}\frac{p^{e}}{p^{e}-i}\cdot s_{i}(\bar{\alpha})\cdot \alpha_{1}^{p^{e}-i} + \beta.
\end{equation}

Considering $f$ as a polynomial in variables $z,\alpha_{1},\alpha_{2},\cdot \cdot \cdot,\alpha_{p^{e}-1},\beta$, the coeffiecients of $f$ are not necessarily integers. However the coefficients of $f$ belongs to $\Z_{(p)}$, the localisation of $\Z$ at the prime $p$. The polynomial $f$ is monic as a polynomial in  $\Z_{(p)}[z,\alpha_{1},\alpha_{2},\cdot \cdot \cdot,\alpha_{p^{e}-1}][\beta]$.
Similarly, the polynomials $f_{k,n,p^{e}}, h_{k,n,p^{e}}$ lie in the polynomial ring $\Z_{(p)}[p^{e}-1]:= \Z_{(p)}[\alpha_{1},\cdot \cdot \cdot,\alpha_{p^{e}-1},\beta]$,\;$\Z_{(p)}[p^{e}-2]$, respectively and both are monic as polynomials in $\beta$.

Note that,
\begin{align}\label{j}
    f(z)-f(w) & = (z^{p^{e}}-w^{p^{e}}) + \sum_{i=2}^{p^{e}-1} (-1)^{i} \frac{p^{e}}{p^{e}-i}\cdot s_{i}(\bar{\alpha})\cdot (z^{p^{e}-i}-w^{p^{e}-i}) \notag \\
    & =(z-w)\left(\Phi_{p^{e}}(z,w)+ \sum_{i=2}^{p^{e}-1} (-1)^{i}\frac{p^{e}}{p^{e}-i}\cdot s_{i}(\bar{\alpha})\cdot \Phi_{p^{e}-i}(z,w)\right),
\end{align}
where $\Phi_{d}(z,w)= (z^{d}-w^{d})/(z-w), d\in \N$.

\subsection{A weak version of Thurston's rigidity theorem for $\mathcal{M}_{p^{e}}$}\label{lemmasthurs}
Thurston's rigidity theorem studies transversality of the intersection of equations for two different critical points to be preperiodic, in moduli spaces of rational maps (see \cite{10.1007/BF02392534}). In this article, we need a much weaker version of Thurston's rigidity for moduli spaces of polynomials with a marked critical point. While the proof of the general Thurston's rigidity theorem requires deep tools, the version we need here (Theorem \ref{thurs}) can be proved using elementary algebraic methods. Let $f$ be the polynomial as defined in Equation \eqref{i}.
\begin{theorem}\label{thurs}
    Fix $i\in \{2,\cdot \cdot \cdot,p^{e}-1\}$. Fix $k_{1},k_{i}\in \N \cup \{0\}$, and $n_{1},n_{i}\in \N$. Then, the polynomials 
    \begin{equation*}
        f^{k_{1}+n_{1}}(\alpha_{1}) - f^{k_{1}}(\alpha_{1}) \text{ and } f^{k_{i}+n_{i}}(\alpha_{i}) - f^{k_{i}}(\alpha_{i})
    \end{equation*}
    are coprime in $\C[p^{e}-1]:= \C[\alpha_{1},\alpha_{2},\cdot \cdot \cdot,\alpha_{p^{e}-1},\beta]$. Moreover, the polynomials 
    \begin{equation*}
    \hat{f}^{k_{1}+n_{1}}(\hat{\alpha_{1}})-\hat{f}^{k_{1}}(\hat{\alpha_{1}}) \text{ and }\hat{f}^{k_{i}+n_{i}}(\hat{\alpha_{i}})- \hat{f}^{k_{i}}(\hat{\alpha_{i}})    
    \end{equation*}
    are coprime in $\C[p^{e}-2]$. 
\end{theorem}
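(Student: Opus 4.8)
The plan is to avoid iterating $f$ over $\C$ (which is intractable) and instead work modulo $p$, where $f$ degenerates into a translate of a unicritical polynomial. First I would reduce the problem to a resultant computation over $\overline{\F}_{p}$: for $x$ any of the variables $\alpha_{1},\dots,\alpha_{p^{e}-1}$, and for $\hat x$ its image under the hat map (none of these involves $\beta$), the polynomials $f^{k+n}(x)-f^{k}(x)$ and $\hat f^{k+n}(\hat x)-\hat f^{k}(\hat x)$ are monic in $\beta$ — of $\beta$-degree $(p^{e})^{k+n-1}$, or $(p^{e})^{n-1}$ when $k=0$ — since $f$ and $\hat f$ are monic in $\beta$ over $\Z_{(p)}[\alpha_{1},\dots,\alpha_{p^{e}-1}]$ and $\Z_{(p)}[\alpha_{1},\dots,\alpha_{p^{e}-2}]$ respectively. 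Hence the resultant in $\beta$ of either pair of polynomials in the statement lies in $\Z_{(p)}[\alpha_{1},\dots]$, it commutes with reduction modulo $p$, and (the polynomials being monic in $\beta$) coprimality in $\C[p^{e}-1]$, resp.\ $\C[p^{e}-2]$, is equivalent to the non-vanishing of this resultant. So it suffices to prove that the reductions modulo $p$ of the two polynomials are coprime over $\overline{\F}_{p}$.

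Next I would compute that reduction. For $2\le i\le p^{e}-1$ the $p$-adic valuation of $\tfrac{p^{e}}{p^{e}-i}$ equals $e-v_{p}(i)\ge 1$, so every intermediate coefficient of $f$ in \eqref{i} vanishes modulo $p$ and $\bar f(z)=z^{p^{e}}-\alpha_{1}^{p^{e}}+\beta=(z-\alpha_{1})^{p^{e}}+\beta$ over $\F_{p}[\alpha_{1},\dots,\alpha_{p^{e}-1},\beta]$; the same formula holds for $\overline{\hat f}$, because the hat map fixes $\alpha_{1}$ and $\beta$. Conjugating $\bar f$ by the translation $z\mapsto z+\alpha_{1}$ produces the unicritical polynomial $g(w)=w^{p^{e}}+u$ with $u:=\beta-\alpha_{1}$, and $\bar f^{\,m}(x)=g^{m}(x-\alpha_{1})+\alpha_{1}$ for all $m$. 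Therefore the first polynomial reduces modulo $p$ to $A(u):=g^{k_{1}+n_{1}}(0)-g^{k_{1}}(0)$, a polynomial in the single variable $u$ of degree $(p^{e})^{k_{1}+n_{1}-1}\ge 1$, and the second reduces to $B:=g^{k_{i}+n_{i}}(v)-g^{k_{i}}(v)$, where $v:=\hat\alpha_{i}-\alpha_{1}$ — so $v=\alpha_{i}-\alpha_{1}$, except in the case $i=p^{e}-1$ of the second assertion, where $v=-2\alpha_{1}-\alpha_{2}-\cdots-\alpha_{p^{e}-2}$ — and $B$ is monic in $v$ of degree $(p^{e})^{k_{i}+n_{i}}\ge p^{e}\ge 3$.

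To conclude I would note that $u$ and $v$ are linearly independent linear forms (only $u$ has a nonzero $\beta$-coefficient, and $v\ne 0$ since $p$ is odd), so they extend to a system of linear coordinates on the ambient affine space; in these coordinates $A$ involves only $u$ while $B$ involves only $u$ and $v$. Over $\overline{\F}_{p}$ we have $A(u)=c\prod_{j}(u-r_{j})$, so any common factor of $A$ and $B$ is a product of certain $(u-r_{j})$; but for each root $r_{j}$ of $A$ the specialization $B(v,r_{j})$ is still monic in $v$, hence nonzero, so $(u-r_{j})\nmid B$. Thus $A$ and $B$ are coprime over $\overline{\F}_{p}$, and by the first step both coprimality statements of the theorem follow. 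The single piece of real content here is the mod-$p$ collapse $\bar f(z)=(z-\alpha_{1})^{p^{e}}+\beta$, available precisely because $d=p^{e}$ is a prime power (every $\tfrac{p^{e}}{p^{e}-i}$, $2\le i\le p^{e}-1$, is then divisible by $p$); once that is in hand, the remaining steps are formal and reduce the statement to the essentially trivial unicritical computation above.
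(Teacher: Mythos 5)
Your proof is correct and follows essentially the same route as the paper: both arguments reduce coprimality over $\C$ to coprimality modulo $p$ by exploiting monicity in $\beta$, and both rest on the collapse $f(z)\equiv (z-\alpha_{1})^{p^{e}}+\beta \pmod{p}$ available because $d$ is a $p$-power. The only cosmetic difference is the final coprimality check mod $p$: the paper compares highest-degree homogeneous parts, which are powers of the two coprime linear forms $\beta-\alpha_{1}$ and $\alpha_{i}-\alpha_{1}$ (resp.\ $\hat{\alpha_{i}}-\alpha_{1}$), whereas you pass to the coordinates $u=\beta-\alpha_{1}$, $v=\alpha_{i}-\alpha_{1}$ and specialize $u$ at the roots of $A$; the two verifications are interchangeable.
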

\begin{proof}
    As all the polynomials in the statement of the theorem are defined over $\Q$, it is enough to show that they are coprime over $\Q$.

    More precisely, all the polynomials mentioned above are defined over $\Z_{(p)}$. Let $l\in \N$. Consider $f^{l}(\alpha_{j})$, where $j \in \{1,2,\cdot \cdot \cdot,p^{e}-1\}$. The term in $f^{l}(\alpha_{j})$ containing the highest power of $\beta$ is $\beta^{p^{(l-1)e}}$. As $n_{1},n_{i} \in \N$, we have $k_{1}+n_{1}>k_{1}\geq 0$ and $k_{i}+n_{i}>k_{i}\geq 0$. Hence, $f^{k_{1}+n_{1}}(\alpha_{1}) - f^{k_{1}}(\alpha_{1})$ and $f^{k_{i}+n_{i}}(\alpha_{i}) - f^{k_{i}}(\alpha_{i})$ are both monic polynomials in $\beta$ over the integral domain $\Z_{(p)}[\alpha_{1},\alpha_{2},\cdot \cdot \cdot,\alpha_{p^{e}-1}]$. Hence, to show that $f^{k_{1}+n_{1}}(\alpha_{1}) - f^{k_{1}}(\alpha_{1})$ and $f^{k_{i}+n_{i}}(\alpha_{i}) - f^{k_{i}}(\alpha_{i})$ are coprime over $\Q$, it is enough to show that they are coprime over $\Z_{(p)}$. 

    Observe that $\;\hat{}\;$ being a ring homomorphism, 
    \begin{align*}
        \hat{f}^{k_{1}+n_{1}}(\hat{\alpha_{1}})-\hat{f}^{k_{1}}(\hat{\alpha_{1}})& = \widehat{f^{k_{1}+n_{1}}(\alpha_{1}) - f^{k_{1}}(\alpha_{1})}\\ 
    \mbox{and} \quad  \hat{f}^{k_{i}+n_{i}}(\hat{\alpha_{i}})- \hat{f}^{k_{i}}(\hat{\alpha_{i}}) &= \widehat{f^{k_{i}+n_{i}}(\alpha_{i}) - f^{k_{i}}(\alpha_{i})}.
    \end{align*} As $\hat{\beta}=\beta$, we get that $\hat{f}^{k_{1}+n_{1}}(\hat{\alpha_{1}})-\hat{f}^{k_{1}}(\hat{\alpha_{1}})$ and $\hat{f}^{k_{i}+n_{i}}(\hat{\alpha_{i}})- \hat{f}^{k_{i}}(\hat{\alpha_{i}})$ are both monic polynomials in $\beta$ over $\Z_{(p)}[\alpha_{1},\alpha_{2},\cdot \cdot \cdot,\alpha_{p^{e}-2}]$. Similarly as above, to show that $\hat{f}^{k_{1}+n_{1}}(\hat{\alpha_{1}})-\hat{f}^{k_{1}}(\hat{\alpha_{1}})$ and $\hat{f}^{k_{i}+n_{i}}(\hat{\alpha_{i}})- \hat{f}^{k_{i}}(\hat{\alpha_{i}})$ are coprime over $Q$, it is enough to show that they are coprime over $\Z_{(p)}$.

    Now, we will show that $f^{k_{1}+n_{1}}(\alpha_{1}) - f^{k_{1}}(\alpha_{1})$ and $f^{k_{i}+n_{i}}(\alpha_{i}) - f^{k_{i}}(\alpha_{i})$ are coprime over $\Z_{(p)}$. As they are both monic polynomials in $\beta$ over $\Z_{(p)}[\alpha_{1},\alpha_{2},\cdot \cdot \cdot,\alpha_{p^{e}-1}]$, their g.c.d. must be monic polynomial in $\beta$ over $\Z_{(p)}[\alpha_{1},\alpha_{2},\cdot \cdot \cdot,\alpha_{p^{e}-1}]$ too, upto associates. Hence, $f^{k_{1}+n_{1}}(\alpha_{1}) - f^{k_{1}}(\alpha_{1})$ and $f^{k_{i}+n_{i}}(\alpha_{i}) - f^{k_{i}}(\alpha_{i})$ are coprime over $\Z_{(p)}$ if they are coprime modulo $p\Z_{(p)}$. Observe that,
    \begin{equation*}
        f(z)\equiv z^{p^{e}} - \alpha_{1}^{p^{e}} + \beta \;(\text{mod } p).
    \end{equation*}
    So,  $f^{l}(\alpha_{1})= \alpha_{1}\;(\text{or, }\beta)$ if $l=0\;(\text{or, }1)$, and for $l>1$,
    \begin{align*}
        f^{l}(\alpha_{1}) & \equiv (\beta^{p^{(l-1)e}} - \alpha_{1}^{p^{(l-1)e}}) + \cdot \cdot \cdot + (\beta^{p^{e}} - \alpha_{1}^{p^{e}}) + \beta\\
        & \equiv \beta + \sum_{j=1}^{l-1} (\beta- \alpha_{1})^{p^{je}} \;(\text{mod } p).
    \end{align*}
    Hence, 
    \begin{equation*}
        f^{k_{1}+n_{1}}(\alpha_{1}) - f^{k_{1}}(\alpha_{1}) \equiv \sum_{j=k_{1}}^{k_{1}+n_{1}-1} (\beta - \alpha_{1})^{p^{je}} \;(\text{mod }p).
    \end{equation*}
    Similarly, $f^{0}(\alpha_{i})= \alpha_{i}$, and for $l\in \N$
    \begin{equation*}
        f^{l}(\alpha_{i}) \equiv (\alpha_{i}-\alpha_{1})^{p^{le}} + (\beta - \alpha_{1})^{p^{(l-1)e}} + \cdot \cdot \cdot + (\beta - \alpha_{1})^{p^{e}} + \beta \;(\text{mod }p).
    \end{equation*}
    So modulo $p$, 
    \begin{equation*}
        f^{k_{i}+n_{i}}(\alpha_{i}) - f^{k_{i}}(\alpha_{i}) = \left( (\alpha_{i}-\alpha_{1})^{p^{(k_{i}+n_{i})e}} - (\alpha_{i} - \alpha_{1})^{p^{k_{i}e}}\right) + \left( \sum_{j=k_{i}}^{k_{i}+n_{i}-1} (\beta - \alpha_{1})^{p^{je}} \right).
    \end{equation*}
    Modulo $p$, the highest degree homogeneous part of $f^{k_{1}+n_{1}}(\alpha_{1}) - f^{k_{1}}(\alpha_{1})$ is 
    $$(\beta - \alpha_{1})^{p^{(k_{1}+n_{1}-1)e}}$$
    and the highest degree homogeneous part of $f^{k_{i}+n_{i}}(\alpha_{i}) - f^{k_{i}}(\alpha_{i})$ is $$(\alpha_{i}-\alpha_{1})^{p^{(k_{i}+n_{i})e}}.$$ The highest degree homogeneous part of g.c.d. of two polynomials must divide the highest degree homogeneous part of each of them. As $(\beta- \alpha_{1})$ and $(\alpha_{i}-\alpha_{1})$ are two distinct irreducible polynomials over $\F_{p}$, the polynomials $f^{k_{1}+n_{1}}(\alpha_{1}) - f^{k_{1}}(\alpha_{1})$ and $f^{k_{i}+n_{i}}(\alpha_{i}) - f^{k_{i}}(\alpha_{i})$ are coprime modulo $p\Z_{(p)}$. Hence, They are coprime over $\Z_{(p)}$ too. 

    In the last paragraph, replacing $f^{k_{1}+n_{1}}(\alpha_{1}) - f^{k_{1}}(\alpha_{1})$ and $f^{k_{i}+n_{i}}(\alpha_{i}) - f^{k_{i}}(\alpha_{i})$ with $\hat{f}^{k_{1}+n_{1}}(\hat{\alpha_{1}})-\hat{f}^{k_{1}}(\hat{\alpha_{1}})$ and $\hat{f}^{k_{i}+n_{i}}(\hat{\alpha_{i}})- \hat{f}^{k_{i}}(\hat{\alpha_{i}})$, respectively, every argument follows verbatim and one obtains the second part of the theorem.
\end{proof}
\begin{remark}
    We will require the above theorem in later sections. Whenever we mention ``Thurston's rigidity" from here on, we will mean the above theorem (Theorem \ref{thurs}).
\end{remark}
\subsection{Generalised Eisenstein Irreducibility criterion}
\begin{theorem}\label{m}
 Let $p,e\in \N, p$ odd prime. Let $\Z_{(p)}$ be the localisation of $\Z$ at the prime $p$. Let $g,h$ be non-constant elements of $\Z_{(p)}[p^{e}-2]$, both monic as elements of $\Z_{(p)}[\alpha_{1},\alpha_{2},\cdot \cdot \cdot,\alpha_{p^{e}-2}][\beta]$. Let $\mbox{Res}(g,h)$ denote the resultant of $g,h$ both considered as polynomials in $\beta$ over the integral domain $\Z_{(p)}[\alpha_{1},\alpha_{2},\cdot \cdot \cdot,\alpha_{p^{e}-2}]$. Suppose the following conditions hold,\\
 1) $g\equiv h^{n}\;(\text{mod }p)$, for some $n\in \N$.\\
2) $h\;(\text{mod } p)$ is irreducible in $\F_{p}[p^{e}-2]$.\\
3) $\mbox{Res}(g,h)\not\equiv 0\;(\text{mod }p^{2\cdot deg(h)}\Z_{(p)})$, where $deg(h)$ is the degree of $h$ as a polynomial in $\beta$ over $\Z_{(p)}[\alpha_{1},\alpha_{2},\cdot \cdot \cdot,\alpha_{p^{e}-2}]$.

Then, $g$ is irreducible in $\Q[p^{e}-2]$.
\end{theorem}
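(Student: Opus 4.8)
The plan is to argue by contradiction, extending the proof of the classical Eisenstein criterion with the linear factor ``$\beta$'' there replaced by $h$. Throughout, set $R := \Z_{(p)}[\alpha_{1},\ldots,\alpha_{p^{e}-2}]$, so that $\Z_{(p)}[p^{e}-2] = R[\beta]$, and recall that $R$, $\Z_{(p)}[p^{e}-2]$ and $\Q[p^{e}-2]$ are all unique factorization domains. Since $g$ is monic in $\beta$, it is primitive as an element of $R[\beta]$, so by Gauss's lemma $g$ is irreducible in $\Q[p^{e}-2]$ if and only if it is irreducible in $\Z_{(p)}[p^{e}-2]$. Moreover, if $g = G_{1}G_{2}$ is any nontrivial factorization in $\Q[p^{e}-2]$, then comparing the leading coefficients in $\beta$ shows that a factor of $\beta$-degree $0$ would have to be a unit; hence both $G_{i}$ have positive $\beta$-degree, and after clearing denominators and rescaling by units of $\Z_{(p)}$ one may assume $g = g_{1}g_{2}$ with $g_{1},g_{2}\in\Z_{(p)}[p^{e}-2]$, both monic in $\beta$ and of positive $\beta$-degree. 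It thus suffices to exclude such a factorization.

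Next I would reduce modulo $p$. Denoting by a bar the reduction $\Z_{(p)}[p^{e}-2]\to\F_{p}[p^{e}-2]$, condition (1) gives $\bar{g}_{1}\bar{g}_{2} = \bar{g} = \bar{h}^{\,n}$, while condition (2) says $\bar{h}$ is irreducible, hence prime, in the UFD $\F_{p}[p^{e}-2]$. Unique factorization forces $\bar{g}_{i} = \bar{h}^{\,a_{i}}$ with $a_{1}+a_{2} = n$ (there is no unit factor, since $g_{i}$ and $h$ are monic in $\beta$). Because each $g_{i}$ is monic and nonconstant in $\beta$, its reduction $\bar{g}_{i} = \bar{h}^{\,a_{i}}$ has positive $\beta$-degree, so $a_{i}\geq 1$; in particular $\bar{h}\mid\bar{g}_{i}$, and lifting we may write $g_{i} = h^{\,a_{i}} + p\,r_{i}$ with $r_{i}\in\Z_{(p)}[p^{e}-2]$ and $a_{i}\geq 1$ for $i = 1,2$.

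The heart of the argument is then the resultant estimate. By multiplicativity of the resultant, $\mbox{Res}(g,h) = \mbox{Res}(g_{1},h)\cdot\mbox{Res}(g_{2},h)$. Since $h$ is monic in $\beta$, writing $\theta_{1},\ldots,\theta_{m}$ (with $m = \deg(h)$) for the roots of $h$ in an algebraic closure of $\mbox{Frac}(R)$, one has $\mbox{Res}(g_{i},h) = \pm\prod_{j=1}^{m} g_{i}(\theta_{j})$. As $h(\theta_{j}) = 0$ and $a_{i}\geq 1$, we get $g_{i}(\theta_{j}) = h(\theta_{j})^{a_{i}} + p\,r_{i}(\theta_{j}) = p\,r_{i}(\theta_{j})$, so that $\mbox{Res}(g_{i},h) = \pm p^{m}\prod_{j}r_{i}(\theta_{j}) = \pm p^{\deg(h)}\,\mbox{Res}(h,r_{i}) \in p^{\deg(h)}R$ (the last equality because $h$ is monic, so $\prod_{j}r_{i}(\theta_{j}) = \mbox{Res}(h,r_{i})\in R$). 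Multiplying the two, $\mbox{Res}(g,h)\in p^{2\deg(h)}R$, that is $\mbox{Res}(g,h)\equiv 0\pmod{p^{2\deg(h)}\Z_{(p)}}$, contradicting condition (3). Hence $g$ admits no such factorization and is irreducible in $\Q[p^{e}-2]$.

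I expect the only delicate points to be the bookkeeping in the Gauss's lemma step — ensuring both factors can be taken monic in $\beta$ and of positive $\beta$-degree, which uses precisely that $g$ is monic (hence primitive) in $\beta$ — and the verification that $\prod_{j}r_{i}(\theta_{j})$ lies in $R$, which holds since it is $\mbox{Res}(h,r_{i})$, the leading-coefficient factor being trivial as $h$ is monic. Both are routine. The one genuinely new input compared with the classical Eisenstein criterion is that $\bar{h}\mid\bar{g}_{i}$ forces a full factor $p^{\deg(h)}$ — not merely $p$ — into $\mbox{Res}(g_{i},h)$, which is exactly what makes the exponent $2\deg(h)$ in condition (3) the right threshold.
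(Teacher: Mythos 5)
Your proof is correct and follows essentially the same route as the paper's: reduce to a factorization $g=g_{1}g_{2}$ into monic factors over $\Z_{(p)}$ via Gauss's lemma, use conditions (1) and (2) to write $g_{i}=h^{a_{i}}+p\,r_{i}$ with $a_{i}\geq 1$, and then derive $\mbox{Res}(g,h)\in p^{2\deg(h)}\Z_{(p)}[\alpha_{1},\ldots,\alpha_{p^{e}-2}]$ from multiplicativity of the resultant, contradicting condition (3). Your treatment is in fact slightly more careful than the paper's at two points — ruling out factors of $\beta$-degree zero and justifying $\mbox{Res}(g_{i},h)=\pm p^{\deg(h)}\mbox{Res}(h,r_{i})$ via the root-product formula — but these are the same steps the paper performs implicitly.
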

\begin{proof}
We will prove by contradiction. Let us assume that $g$ is reducible in\\ $\Q[p^{e}-2]$. As $g$ is monic in $\beta$, by Gauss lemma, let $g=g_{1}\cdot g_{2},$ such that $ g_{1},g_{2}\in \Z_{(p)}[p^{e}-2]$ both monic as elements of $\Z_{(p)}[\alpha_{1},\alpha_{2},\cdot \cdot \cdot,\alpha_{p^{e}-2}][\beta]$ and none of them is a unit. By conditions $1$ and $2$ of the theorem, $g_{1}\equiv h^{n_{1}}\;\modp$, and $g_{2}\equiv h^{n_{2}}\;\modp$, such that $n_{1},n_{2}\in \N, n_{1}+n_{2}=n$. So, there exist polynomials $x_{1},x_{2}\in \Z_{(p)}[p^{e}-2]$, such that $g_{1}=h^{n_{1}}+p\cdot x_{1}$ and $g_{2}=h^{n_{2}}+p\cdot x_{2}$. Then,
\begin{align*}
    \mbox{Res}(g,h) & =\mbox{Res}(g_{1},h)\cdot \mbox{Res}(g_{2},h)\\
    & =\mbox{Res}(p\cdot x_{1},h)\cdot \mbox{Res}(p\cdot x_{2},h)\\
     & = p^{2\cdot deg(h)}\cdot \mbox{Res}(x_{1},h)\cdot \mbox{Res}(x_{2},h).
\end{align*}
 But by the third assumption of the theorem, $p^{2\cdot deg(h)}$ does not divide the $\mbox{Res}(g,h)$. Hence, we arrive at a contradiction. 
\end{proof}

\section{Irreducibility of $\Sigma_{k,1,p}$}\label{t'''}
Let the degree of the polynomial $f$ in Equation \eqref{b} be an odd prime $p$. Then,
\begin{equation}\label{fdegp}
    f(z)=z^{p} + \sum_{i=2}^{p-1}(-1)^{i}\frac{p\cdot s_{i}(\bar{\alpha})}{p-i} z^{p-i} - \alpha_{1}^{p} - \sum_{i=2}^{p-1}(-1)^{i}\frac{p\cdot s_{i}(\bar{\alpha})}{p-i} \alpha_{1}^{p-i} + \beta.
\end{equation}
 We recall a few notations and a fact from previous sections.  
\begin{itemize}
    \item For any ring $R$ and $d\in \N$, we define $R[d]:= R[\alpha_{1}, \alpha_{2},.., \alpha_{d}, \beta]$.
    \item $f_{k,1,p}$ lies in $ \Z_{(p)}[p-1],\hspace{5pt} \hat{f}_{k,1,p}, h_{k,1,p}$ lie in $\Z_{(p)}[p-2]$.
    \item for $g,h\in \Z_{(p)}[p-1]$,\hspace{5pt} $g\equiv h\;\modp$ implies that $\hat{g} \equiv \hat{h}\;\modp$.
\end{itemize}

Let $k\in \N$. We will show that $h_{k,1,p}$ is a generalised Eisenstein polynomial. First, we study the polynomial $h_{0,1,p}$.
\begin{lemma}\label{h01p}
The polynomial $h_{0,1,p}$ is $\beta - \alpha_{1}$.
\end{lemma}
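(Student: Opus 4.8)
The plan is to compute $h_{0,1,p}$ directly from its definition, using the fact that for $k=0, n=1$ there are no proper ``divisor'' factors to remove. First I would observe that $f_{0,1,p} = f^{1}(\alpha_{1}) - f^{0}(\alpha_{1}) = f(\alpha_{1}) - \alpha_{1}$. But by the very construction of the normal form in Equation \eqref{b}, we have $f(\alpha_{1}) = \beta$; this is precisely the relation $\beta = f(\alpha_{1})$ that was used to determine the constant term $c'$ in Proposition \ref{propnormal}. Hence $f_{0,1,p} = \beta - \alpha_{1}$ as an element of $\Z_{(p)}[p-1]$, and since this polynomial does not involve $\alpha_{p-1}$, applying the map $\Psi_{p,\Z_{(p)}}$ leaves it unchanged, so $\hat{f}_{0,1,p} = \beta - \alpha_{1}$.

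Next I would check that the denominator $\prod_{i} g_{i}^{a_{i}}$ in the definition \eqref{g'} of $h_{0,1,p}$ is empty, i.e. equal to $1$. The $g_{i}$ range over the irreducible factors of $\hat{f}_{l,m,p}$ for all pairs $(l,m)$ with $0 \leq l \leq k = 0$, $1 \leq m \leq n = 1$, $m \mid n$, and $(l,m) \neq (k,n) = (0,1)$. The only candidate pair is $(l,m) = (0,1)$ itself, which is explicitly excluded. Therefore there are no such $g_{i}$, the product is the empty product $1$, and $h_{0,1,p} = \hat{f}_{0,1,p} / 1 = \beta - \alpha_{1}$.

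Finally I would note that $\beta - \alpha_{1}$ is indeed irreducible (it is linear and monic in $\beta$), which is consistent with the role this lemma plays: by Corollary \ref{coro1} it already recovers the (trivial) irreducibility of $\Sigma_{0,1,p}$, the locus where the marked critical point is fixed, i.e. where $\alpha_{1} = \beta = f(\alpha_{1})$. I do not anticipate any real obstacle here; the only point requiring care is making explicit that the exclusion $(l,m) \neq (k,n)$ in the definition of $h_{k,n,d}$ forces the normalizing product to be trivial in this base case, so that $h_{0,1,p}$ is literally $f_{0,1,p}$ with the redundant variable eliminated.
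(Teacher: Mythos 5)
Your proposal is correct and follows essentially the same route as the paper, which simply notes that $h_{0,1,p}=\hat{f}_{0,1,p}=\hat{f}(\alpha_{1})-\alpha_{1}=\beta-\alpha_{1}$; you merely make explicit the two points the paper leaves implicit, namely that $f(\alpha_{1})=\beta$ by the normal form and that the normalizing product in \eqref{g'} is empty for $(k,n)=(0,1)$.
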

\begin{proof}
By definition of $h_{0,1,p}$, we have $h_{0,1,p}=\hat{f}_{0,1,p}= \hat{f}(\alpha_{1})-\alpha_{1}=\beta - \alpha_{1}$.
\end{proof}
Now we will show $h_{k,1,p}$ satisfies condition $1$ of generalised Eisenstein irreducibility criterion (Theorem \ref{m}), with respect to the polynomial $h_{0,1,p}$.
\begin{lemma}\label{Eisen1}
For any $k\in \N$, $h_{k,1,p}\equiv h_{0,1,p}^{N_{k,p}}\:\modp$, for some $N_{k,p}\in \N$.
\end{lemma}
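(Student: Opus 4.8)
The plan is to reduce the whole statement to an explicit congruence modulo $p$. Write $\F_p[p-2]$ for $\F_p[\alpha_1,\dots,\alpha_{p-2},\beta]$. The proof of Theorem~\ref{thurs} already records (take $e=1$) that $f(z)\equiv z^{p}-\alpha_1^{p}+\beta\ \modp$, and iterating this gives, for every $l\geq 1$,
\[
f^{l}(\alpha_1)\equiv \beta+\sum_{j=1}^{l-1}(\beta-\alpha_1)^{p^{j}}\ \modp .
\]
Subtracting the $l=k$ term from the $l=k+1$ term yields, for every $k\in\N$,
\[
f_{k,1,p}=f^{k+1}(\alpha_1)-f^{k}(\alpha_1)\equiv(\beta-\alpha_1)^{p^{k}}\ \modp .
\]
Since $\;\widehat{}\;$ is a ring homomorphism that is compatible with reduction modulo $p$ and fixes $\beta$ and $\alpha_1$, applying it gives
\[
\hat f_{k,1,p}=\widehat{f_{k,1,p}}\equiv(\beta-\alpha_1)^{p^{k}}=h_{0,1,p}^{\,p^{k}}\ \modp ,
\]
where the last equality is Lemma~\ref{h01p}.

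Next I would feed this into the defining factorisation $\hat f_{k,1,p}=h_{k,1,p}\cdot\prod_i g_i^{a_i}$ in $\Z_{(p)}[p-2]$. Here $h_{k,1,p}$ is monic in $\beta$, and the $g_i$ (irreducible factors of the $\hat f_{l,1,p}$, $l<k$) may be taken monic in $\beta$ as well, so all factors have unit leading coefficient and hence remain monic, in particular nonzero, after reduction modulo $p$. Reducing the factorisation modulo $p$ and working in $\F_p[p-2]$ therefore gives an equality $(\beta-\alpha_1)^{p^{k}}=\overline{h_{k,1,p}}\cdot\prod_i\overline{g_i}^{\,a_i}$. Since $\F_p[p-2]$ is a UFD and $\beta-\alpha_1$ is irreducible in it, $\overline{h_{k,1,p}}$ divides $(\beta-\alpha_1)^{p^{k}}$ and so is a unit times a power of $\beta-\alpha_1$; monicity in $\beta$ forces the unit to be $1$. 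Hence $h_{k,1,p}\equiv(\beta-\alpha_1)^{N_{k,p}}=h_{0,1,p}^{N_{k,p}}\ \modp$ for some integer $N_{k,p}\geq 0$.

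Finally I would check $N_{k,p}\geq 1$, i.e.\ that $h_{k,1,p}$ is non-constant; equivalently that $\hat f_{k,1,p}$ has an irreducible factor dividing none of the $\hat f_{l,1,p}$ with $l<k$. This is the non-emptiness of the strict $(k,1)$-preperiodic locus in $\C^{p-1}$ (cf.\ Lemma~\ref{h}), which is classical — it already occurs inside the unicritical slice $X^{p}+c$ — and which can also be seen by comparing $\beta$-degrees. Granting it, $N_{k,p}\in\N$, which proves the lemma. I do not expect a serious obstacle here: the engine is the mod-$p$ iteration formula above. The only point requiring care is bookkeeping — making sure $\;\widehat{}\;$ and reduction modulo $p$ are applied compatibly, and that the UFD argument upgrades "$\overline{h_{k,1,p}}$ is supported on $\beta-\alpha_1$" to "$\overline{h_{k,1,p}}$ is an actual power of $\beta-\alpha_1$" — together with pinning down $N_{k,p}\geq 1$.
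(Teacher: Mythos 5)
Your proposal is correct and follows essentially the same route as the paper: the identical mod-$p$ iteration computation giving $\hat f_{k,1,p}\equiv(\beta-\alpha_1)^{p^{k}}\ \modp$, followed by the observation that $h_{k,1,p}$ divides this and $\beta-\alpha_1$ is irreducible over $\F_p$. The only difference is that you spell out the UFD/monicity bookkeeping and the verification that $N_{k,p}\geq 1$, which the paper leaves implicit.
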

\begin{proof}
Let $k\in \N$. From Equation \eqref{fdegp}, we have $f(z)\equiv z^{p}-\alpha_{1}^{p}+\beta$ $\modp$. So, 
\begin{equation*}
    f^{k}(z)\equiv z^{p^{k}}-\alpha_{1}^{p^{k}}+\beta^{p^{k-1}}-\alpha_{1}^{p^{k-1}}+\cdot \cdot \cdot+\beta^{p}-\alpha_{1}^{p}+\beta\;\; \modp,
\end{equation*}
\begin{align*}
     f_{k,1,p} & = f^{k+1}(\alpha_{1})-f^{k}(\alpha_{1})\\
     & = f^{k}(\beta) -f^{k-1}(\beta)\\ & \equiv \beta^{p^{k}}-\alpha_{1}^{p^{k}} \modp\\
     & \equiv (\beta-\alpha_{1})^{p^{k}} \modp.
\end{align*}
Hence,  $\hat{f}_{k,1,p} = \widehat{f_{k,1,p}} \equiv (\beta - \alpha_{1})^{p^{k}} \;\modp $. As $h_{k,1,p}$ divides $\hat{f}_{k,1,p}$ and $h_{0,1,p}= \beta-\alpha_{1}$ is irreducible over $\F_{p}$, we get $h_{k,1,p}\equiv h_{0,1,p}^{N_{k,p}}\; \modp$, for some $N_{k,p}\in \N$. This proves the lemma.   
\end{proof}
We see from Lemma \ref{h01p} that $h_{0,1,p}$ satisfies condition $2$ of generalised Eisenstein irreducibility criterion (Theorem \ref{m}). For the resultant criterion (condition $3$) of Theorem \ref{m}, we need to study the lowest degree homogeneous part of $h_{k,1,p}$. 
\begin{lemma}\label{Eisen2}
For any natural number $k\geq 2$, the polynomial $h_{k,1,p}$ is of the form  
\begin{equation*}
    h_{k,1,p}= \frac{\hat{f}_{k,1,p}}{\hat{f}_{k-1,1,p}\cdot (\beta-\alpha_{1})},
\end{equation*}
 and the lowest degree homogeneous part of $h_{k,1,p}$ is $p(\beta + \sum_{i=1}^{p-2} \alpha_{i})\prod_{i=2}^{p-2} (\beta-\alpha_{i})$.
\end{lemma}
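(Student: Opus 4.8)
The plan is to prove the two claims in the order they are stated. For the first claim, I would argue that the only pairs $(l,m)$ with $l \le k$, $m \mid 1$, $(l,m) \ne (k,1)$ are the pairs $(l,1)$ with $0 \le l \le k-1$. By Lemma \ref{l}, each $\hat f_{l,1,p}$ divides $\hat f_{k,1,p}$, and moreover $\hat f_{l,1,p}$ divides $\hat f_{k-1,1,p}$ for all $l \le k-1$. So the product of all irreducible factors of all the $\hat f_{l,1,p}$ (each to the highest power dividing $\hat f_{k,1,p}$) is, up to the exponents, captured by $\hat f_{k-1,1,p}$ together with $h_{0,1,p} = \hat f_{0,1,p} = \beta - \alpha_1$. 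The content of the argument is that no irreducible factor of $\hat f_{k-1,1,p}$ occurs in $\hat f_{k,1,p}$ with multiplicity strictly larger than its multiplicity in $\hat f_{k-1,1,p}$, except possibly $\beta-\alpha_1$, whose extra multiplicity is exactly one; equivalently $\hat f_{k,1,p}/\big(\hat f_{k-1,1,p}(\beta-\alpha_1)\big)$ is coprime to every $\hat f_{l,1,p}$. Here I would invoke the mod $p$ reduction from Lemma \ref{Eisen1}: $\hat f_{k,1,p} \equiv (\beta-\alpha_1)^{p^k}$, so $\hat f_{k,1,p}$ is, mod $p$, a pure power of the single irreducible $\beta - \alpha_1$; combined with the degree bookkeeping in $\beta$ (each $f^{j}(\alpha_1)$ has $\beta$-degree $p^{j-1}$), this pins down the exponent $a_i$ for $\beta-\alpha_1$ and shows all other $g_i$ contribute through $\hat f_{k-1,1,p}$ exactly, giving the displayed formula for $h_{k,1,p}$.

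For the second claim I would compute the lowest-degree homogeneous part (i.e., the initial form at the origin) of $\hat f_{k,1,p}$, $\hat f_{k-1,1,p}$, and $\beta - \alpha_1$, and divide. Write $\bar f = \hat f$. Working in the graded ring and keeping only lowest-order terms: from Equation \eqref{j}, $f(z) - f(w) = (z-w)\big(\Phi_p(z,w) + \sum_{i=2}^{p-1}(-1)^i \tfrac{p}{p-i} s_i(\bar\alpha)\,\Phi_{p-i}(z,w)\big)$, and since $\Phi_m(z,w)$ has lowest-degree part of degree $m-1$ while $s_i(\bar\alpha)$ already has degree $i$, the dominant low-order contribution inside the bracket as $z,w \to 0$ comes from the $i = p-1$ term, giving lowest part $(-1)^{p-1}\tfrac{p}{1} s_{p-1}(\bar\alpha) \cdot \Phi_1(z,w) = p\, s_{p-1}(\bar\alpha)$ (using $p$ odd so $(-1)^{p-1}=1$, and $\Phi_1 = 1$). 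Iterating $f$, one shows by induction that $f^{j}(\alpha_1)$ has vanishing constant and linear part in a controlled way, so that $f^{k+1}(\alpha_1) - f^{k}(\alpha_1) = f^{k}(\beta) - f^{k-1}(\beta)$ has initial form equal to $p\,s_{p-1}(\bar\alpha)$ evaluated along the orbit, which at lowest order is just $p\, s_{p-1}(\bar\alpha)$ with $\alpha_{p-1}$ replaced by $-(\alpha_1 + \cdots + \alpha_{p-2})$ via the hat map; and similarly the initial form of $\hat f_{k-1,1,p}$. Dividing these initial forms (initial form of a product is the product of initial forms, valid since we are in an integral domain and no cancellation occurs) and then dividing by the initial form $\beta - \alpha_1$, one is left with the initial form of $h_{k,1,p}$. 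After substituting $\alpha_{p-1} = -\sum_{i=1}^{p-2}\alpha_i$ in $s_{p-1}(\bar\alpha) = \prod_{i=1}^{p-1}\alpha_i$ and tracking which factors are cancelled by $\hat f_{k-1,1,p}$ and by $\beta-\alpha_1$, the surviving factor is $p\big(\beta + \sum_{i=1}^{p-2}\alpha_i\big)\prod_{i=2}^{p-2}(\beta - \alpha_i)$.

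I expect the main obstacle to be the second claim: correctly identifying the lowest-degree homogeneous part through the iteration. The subtlety is that "lowest degree" refers to total degree in all of $\alpha_1,\dots,\alpha_{p-2},\beta$ simultaneously, and one must be sure that iterating $f$ does not produce cancellation among the lowest-order terms and that composition does not lower the total degree below what the naive estimate predicts. The cleanest way to control this is to set up a clean inductive statement describing the initial form of $f^{j}(\alpha_1)$ and of $f^{j}(\beta)$ (for instance, that the initial form of $f^{j}(\beta)$ is $\beta$ itself up to higher order, while the difference $f^{j}(\beta) - f^{j-1}(\beta)$ has initial form a nonzero multiple of $s_{p-1}(\bar\alpha)$), prove it by induction using the chain rule on initial forms, and only then apply the hat substitution and the division by $\hat f_{k-1,1,p}(\beta-\alpha_1)$. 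The bookkeeping of exactly which linear factors survive the two divisions — yielding $\prod_{i=2}^{p-2}(\beta-\alpha_i)$ rather than, say, $\prod_{i=2}^{p-1}$ or $\prod_{i=1}^{p-2}$ — is where I would be most careful, using Thurston's rigidity (Theorem \ref{thurs}) if needed to rule out spurious common factors between $\hat f_{k,1,p}$ and the polynomials cut out by the other critical points.
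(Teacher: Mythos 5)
Your proposal correctly identifies the two things that must be established, but both of the mechanisms you propose for establishing them break down. For the first claim, the assertion that the mod $p$ reduction $\hat f_{k,1,p}\equiv(\beta-\alpha_1)^{p^k}$ together with $\beta$-degree bookkeeping ``pins down'' the multiplicities is not a proof: reduction mod $p$ collapses every irreducible factor onto a power of $\beta-\alpha_1$ and therefore carries no information about the characteristic-zero multiplicity with which an irreducible factor $g$ of $\hat f_{k-1,1,p}$ divides $\hat f_{k,1,p}$; nor does the total degree in $\beta$ distinguish between the many ways those multiplicities could be distributed. The paper's argument here is genuinely different and is the heart of the lemma: from $\frac{f(z)-f(w)}{z-w}\equiv f'(w)\ (\mathrm{mod}\ z-w)$ one gets $\hat f_{k,1,p}/\hat f_{k-1,1,p}\equiv p\prod_{i=1}^{p-1}\bigl(\hat f^{k-1}(\alpha_1)-\hat\alpha_i\bigr)\ (\mathrm{mod}\ \hat f_{k-1,1,p})$, so any common irreducible factor of $\hat f_{k-1,1,p}$ and the quotient divides one of the $\hat f^{k-1}(\alpha_1)-\hat\alpha_i$; Thurston's rigidity (Theorem \ref{thurs}) kills the cases $i\geq 2$, and Lemma \ref{k} applied to $\hat f_{0,k-1,p}$ and $\hat f_{k-1,1,p}$ reduces the case $i=1$ to $\beta-\alpha_1$. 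You also never verify that $\beta-\alpha_1$ divides $\hat f_{k,1,p}/\hat f_{k-1,1,p}$ at all (the paper gets this from $f^k(\alpha_1)\equiv\alpha_1\ (\mathrm{mod}\ \beta-\alpha_1)$ and $f'(\alpha_1)=0$), which is needed for the displayed formula to make sense.

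For the second claim your initial-form computation is incorrect. The bracket $\Phi_p(z,w)+\sum_{i=2}^{p-1}(-1)^i\frac{p}{p-i}s_i(\bar\alpha)\Phi_{p-i}(z,w)$ is homogeneous of total degree $p-1$ in $(z,w,\bar\alpha)$ jointly, so there is no single ``dominant'' term as $z,w\to 0$; what matters is that $z=f^k(\alpha_1)$ and $w=f^{k-1}(\alpha_1)$ each have initial form $\beta$ (degree one, the same as the $\alpha_i$) when $k\geq 2$. Substituting $\beta$ for both $z$ and $w$ gives the initial form $f'(\beta)=p\prod_{i=1}^{p-1}(\beta-\alpha_i)$, not $p\,s_{p-1}(\bar\alpha)=f'(0)$. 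Your answer $p\,s_{p-1}(\bar\alpha)$ contains no $\beta$, so no amount of ``tracking cancelled factors'' can turn it into $p\bigl(\beta+\sum_{i=1}^{p-2}\alpha_i\bigr)\prod_{i=2}^{p-2}(\beta-\alpha_i)$; the final step of your sketch is therefore not coherent. Once the initial form is correctly computed as $f'(\beta)$, dividing by the factor $\beta-\alpha_1$ coming from $\hat f_{0,1,p}$ and applying the hat substitution gives the stated expression, and the fact that $\beta-\alpha_1$ does not divide it is exactly what shows the extra multiplicity of $\beta-\alpha_1$ is one.
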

\begin{proof}
To obtain $h_{k,1,p}$, we need to factor out all the irreducible factors of $\hat{f}_{k-1,1,p}$ from $\hat{f}_{k,1,p}$, each raised to the highest power that divides $\hat{f}_{k,1,p}$. First, let us consider the polynomial $$\frac{\hat{f}_{k,1,p}}{\hat{f}_{k-1,1,p}} = \frac{\widehat{f_{k,1,p}}}{\widehat{f_{k-1,1,p}}}.$$

From Equation \eqref{j},
\begin{equation}\label{frac}
    \frac{f(z)-f(w)}{z-w}=\Phi_{p}(z,w)+ \sum_{i=2}^{p-1} (-1)^{i}\frac{p}{p-i}\cdot s_{i}(\bar{\alpha})\cdot \Phi_{p-i}(z,w).
\end{equation}
Putting $z=f^{k}(\alpha_{1}), w=f^{k-1}(\alpha_{1})$ in above, we get,
\begin{equation*}
    \frac{\hat{f}_{k,1,p}}{\hat{f}_{k-1,1,p}}=\Phi_{p}\left(\hat{f}^{k}(\alpha_{1}),\hat{f}^{k-1}(\alpha_{1})\right)+ \sum_{i=2}^{p-1} (-1)^{i}\frac{p}{p-i}\cdot \widehat{s_{i}(\bar{\alpha})}\cdot \Phi_{p-i}\left(\hat{f}^{k}(\alpha_{1}),\hat{f}^{k-1}(\alpha_{1})\right).
\end{equation*}

Next, we need to figure out what are the irreducible polynomials in $\Z_{(p)}[p-2]$ that divide both $\hat{f}_{k-1,1,p}$ and $\hat{f}_{k,1,p}/\hat{f}_{k-1,1,p}$. Note that,
\begin{equation}\label{der}
    \frac{f(z)-f(w)}{z-w} \equiv f'(w) \;(\text{mod } z-w),
\end{equation}
where $f'(w)$ is the derivative of $f(w)$ with respect to $w$. Replacing $z$ with $f^{k}(\alpha_{1})$ and $w$ with $f^{k-1}(\alpha_{1})$ in Equation \eqref{der}, we get
\begin{equation*}
    \frac{f_{k,1,p}}{f_{k-1,1,p}}\equiv f'\left(f^{k-1}(\alpha_{1})\right) \equiv p\left(\prod_{i=1}^{p-1}\left(f^{k-1}(\alpha_{1})-\alpha_{i}\right)\right) \text{ (mod }f_{k-1,1,p}). 
\end{equation*}
\begin{equation}
    \label{eq:5.4}
    \text{Hence, } \frac{\hat{f}_{k,1,p}}{\hat{f}_{k-1,1,p}} \equiv p\left(\prod_{i=1}^{p-1}\left(\hat{f}^{k-1}(\alpha_{1})-\hat{\alpha_{i}}\right)\right) \text{ (mod }\hat{f}_{k-1,1,p}).
\end{equation}

Let $g$ be an irreducible polynomial in $\Z_{(p)}[p-2]$ that divides both $\hat{f}_{k-1,1,p}$ and $\hat{f}_{k,1,p}/\hat{f}_{k-1,1,p}$. Then, $g$ divides $ p\prod_{i=1}^{p-1}(\hat{f}^{k-1}(\alpha_{1})-\hat{\alpha_{i}})$. By Thurston's rigidity theorem (Theorem \ref{thurs}), $\hat{f}_{k-1,1,p}$ and $\hat{f}^{k-1}(\alpha_{1})-\hat{\alpha_{i}}$ are coprime over $\Z_{(p)}$, for any $i, 2\leq i\leq p-1$. So, $g$ divides $\hat{f}^{k-1}(\alpha_{1})-\hat{\alpha_{1}}$ (we can remove $p$, as all the polynomials considered here are monic in $\beta$ over $\Z_{(p)}[\alpha_{1},\alpha_{2},\cdot \cdot \cdot,\alpha_{p-2}]$). Note that, $\hat{f}^{k-1}(\alpha_{1})-\hat{\alpha_{1}}=\hat{f}_{0,k-1,p}$. As $g$ divides both $\hat{f}_{k-1,1,p}$ and $\hat{f}_{0,k-1,p}$, from Lemma \ref{k}, $g$ divides $\hat{f}_{0,1,p}=\beta-\alpha_{1}$. Assuming $t_{k}\in \Z_{\geq 0}$ is the highest power of $\beta-\alpha_{1}$ that divides $\hat{f}_{k,1,p}/\hat{f}_{k-1,1,p}$, we get
\begin{equation*}
    h_{k,1,p}=\frac{\hat{f}_{k,1,p}}{\hat{f}_{k-1,1,p}(\beta-\alpha_{1})^{t_{k}}}, \:\:\: \text{ for any }k\in \N.
\end{equation*}

We will now show that $\beta-\alpha_{1}$ divides $f_{k,1,p}/f_{k-1,1,p}$. Note that,
\begin{equation*}
    f^{k}(\alpha_{1}) \equiv \alpha_{1} \;\;(\text{mod } \beta-\alpha_{1}), \text{ for any } k\in \N.
\end{equation*}
Therefore, modulo $\beta-\alpha_{1}$,
\begin{align*}
    \frac{f_{k,1,p}}{f_{k-1,1,p}} & =\Phi_{p}\left(f^{k}(\alpha_{1}),f^{k-1}(\alpha_{1})\right)+ \sum_{i=2}^{p-1} (-1)^{i}\frac{p}{p-i} s_{i}(\bar{\alpha}) \Phi_{p-i}\left(f^{k}(\alpha_{1}),f^{k-1}(\alpha_{1})\right)\\
    & = \Phi_{p}(\alpha_{1},\alpha_{1})+ \sum_{i=2}^{p-1} (-1)^{i}\frac{p}{p-i} s_{i}(\bar{\alpha}) \Phi_{p-i}(\alpha_{1},\alpha_{1})\\
    & = p\left(\alpha_{1}^{p-1}+\sum_{i=2}^{p-1} (-1)^{i} s_{i}(\bar{\alpha})\alpha_{1}^{p-i-1}\right)\\
    &= f'(\alpha_{1})\\
    & = 0 
\end{align*}
Hence, $\beta-\alpha_{1}$ divides $f_{k,1,p}/f_{k-1,1,p}$ in $\Z_{(p)}[p-1]$.

Let us compute the lowest degree homogeneous part of $f_{k,1,p}/\left(f_{k-1,1,p}(\beta-\alpha_{1})\right)$.
Observe that, the lowest degree homogeneous part of $f^{l}(\alpha_{1})$ is $\beta$, for any $l\in \N$. So, for any $k\in \N, k\geq 2$ (note that in this proof this is the first time we excluded the case $k=1$), the lowest degree homogeneous part of $f_{k,1,p}/f_{k-1,1,p}$ is
\begin{equation*}
    \Phi_{p}(\beta,\beta)+ \sum_{i=2}^{p-1} (-1)^{i}\frac{p}{p-i}\cdot s_{i}(\bar{\alpha})\cdot \Phi_{p-i}(\beta,\beta) = f'(\beta)= p\prod_{i=1}^{p-1}(\beta-\alpha_{i}).
\end{equation*}
Hence, the lowest degree homogeneous part of $f_{k,1,p}/\left(f_{k-1,1,p}(\beta-\alpha_{1})\right)$ is $p\prod_{i=2}^{p-1}(\beta-\alpha_{i})$ and the lowest degree homogeneous part of $\hat{f}_{k,1,p}/\left(\hat{f}_{k-1,1,p}(\beta-\alpha_{1})\right)$ is 
\begin{equation*}
    p\prod_{i=2}^{p-1}(\beta-\hat{\alpha_{i}}) = p(\beta + \sum_{i=1}^{p-2} \alpha_{i})\prod_{i=2}^{p-2} (\beta-\alpha_{i}).
\end{equation*}
 As $\beta-\alpha_{1}$ does not divide the lowest degree homogeneous part of $\hat{f}_{k,1,p}/\left(\hat{f}_{k-1,1,p}(\beta-\alpha_{1})\right)$, we get that $\beta-\alpha_{1}$ does not divide $\hat{f}_{k,1,p}/\left(\hat{f}_{k-1,1,p}(\beta-\alpha_{1})\right)$. Hence, both parts of the lemma are proved.
\end{proof}
As a corollary of Lemma \ref{Eisen2}, we get that the resultant condition of generalised Eisenstein irreducibility criterion (Theorem \ref{m}) is satisfied by $h_{k,1,p}$ and $h_{0,1,p}$, for any $k\geq 2$.
\begin{corollary}\label{resk1p}
For any $k\in \N, k\geq 2$, the resultant $\mbox{Res}\;(h_{k,1,p},h_{0,1,p})\not\equiv 0 \; (\text{mod }p^{2})$.
\end{corollary}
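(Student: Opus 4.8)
The plan is to exploit the fact, recorded already in Lemma~\ref{h01p}, that $h_{0,1,p}=\beta-\alpha_{1}$ is monic of degree one in $\beta$. Viewing $h_{k,1,p}$ and $h_{0,1,p}$ as polynomials in $\beta$ over the integral domain $\Z_{(p)}[\alpha_{1},\dots,\alpha_{p-2}]$, and using that $\beta-\alpha_{1}$ is monic, one has the standard identity
\[
\mbox{Res}(h_{k,1,p},\,h_{0,1,p})=(-1)^{\deg_{\beta}h_{k,1,p}}\,h_{k,1,p}\big|_{\beta=\alpha_{1}} .
\]
So the corollary reduces to the statement that the polynomial $h_{k,1,p}\big|_{\beta=\alpha_{1}}\in\Z_{(p)}[\alpha_{1},\dots,\alpha_{p-2}]$ is not in the ideal $p^{2}\Z_{(p)}[\alpha_{1},\dots,\alpha_{p-2}]$; equivalently, that at least one of its coefficients has $p$-adic valuation exactly $1$.

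To extract that coefficient I would feed in Lemma~\ref{Eisen2}, which (for $k\geq 2$) identifies the lowest-degree homogeneous part of $h_{k,1,p}$, in the $p-1$ variables $\alpha_{1},\dots,\alpha_{p-2},\beta$, as
\[
pL,\qquad L:=\Big(\beta+\sum_{i=1}^{p-2}\alpha_{i}\Big)\prod_{i=2}^{p-2}(\beta-\alpha_{i}),
\]
a form of degree $p-2$. Writing $h_{k,1,p}=pL+(\text{homogeneous parts of degree}\geq p-1)$ and substituting $\beta=\alpha_{1}$, note that a homogeneous form of degree $j$ maps under this linear substitution to a homogeneous form of degree $j$ (possibly $0$); hence the degree-$(p-2)$ homogeneous component of $h_{k,1,p}\big|_{\beta=\alpha_{1}}$ is exactly $p\,L\big|_{\beta=\alpha_{1}}$, no component of smaller degree appears, and no higher-degree part of $h_{k,1,p}$ can contribute to a monomial of total degree $p-2$. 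Therefore the coefficient of the monomial $\alpha_{1}^{p-2}$ in $h_{k,1,p}\big|_{\beta=\alpha_{1}}$ equals the coefficient of $\alpha_{1}^{p-2}$ in $p\,L\big|_{\beta=\alpha_{1}}$.

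It then remains to compute one coefficient. Specializing,
\[
L\big|_{\beta=\alpha_{1}}=\Big(2\alpha_{1}+\sum_{i=2}^{p-2}\alpha_{i}\Big)\prod_{i=2}^{p-2}(\alpha_{1}-\alpha_{i}),
\]
a product of the linear form $2\alpha_{1}+\alpha_{2}+\dots+\alpha_{p-2}$ with the $p-3$ linear forms $\alpha_{1}-\alpha_{i}$; reading off the coefficient of $\alpha_{1}^{p-2}$ gives $2\cdot 1\cdots 1=2$, which is a unit in $\Z_{(p)}$ precisely because $p$ is odd. Consequently the coefficient of $\alpha_{1}^{p-2}$ in $h_{k,1,p}\big|_{\beta=\alpha_{1}}$ is $2p$, and since $v_{p}(2p)=1$ we get $h_{k,1,p}\big|_{\beta=\alpha_{1}}\not\equiv 0\pmod{p^{2}}$; by the resultant identity above, $\mbox{Res}(h_{k,1,p},h_{0,1,p})\not\equiv 0\pmod{p^{2}}$.

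I do not expect a serious obstacle: the substantive work is already in Lemma~\ref{Eisen2}, and what remains is bookkeeping — justifying the resultant-as-specialization formula from monicity of $h_{0,1,p}$, checking that the lowest-degree term survives the substitution $\beta=\alpha_{1}$ without cancellation (which holds because the substitution is linear, hence degree-non-increasing, and because $L\big|_{\beta=\alpha_{1}}$ is a nonzero product of nonzero linear forms), and observing that the resulting leading coefficient $2$ is prime to $p$. The only mildly delicate point is the non-cancellation step, and it is exactly the oddness of $p$ that makes the last step go through.
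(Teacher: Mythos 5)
Your proof is correct and follows essentially the same route as the paper's: identify $\mbox{Res}(h_{k,1,p},\beta-\alpha_{1})$ with the specialization $h_{k,1,p}\big|_{\beta=\alpha_{1}}$ and then apply Lemma~\ref{Eisen2} to see that its lowest degree homogeneous part is $p\bigl(2\alpha_{1}+\sum_{i=2}^{p-2}\alpha_{i}\bigr)\prod_{i=2}^{p-2}(\alpha_{1}-\alpha_{i})$, which is not $0$ modulo $p^{2}$. Your extra bookkeeping (the degree-preservation of the linear substitution and the explicit coefficient $2p$ of $\alpha_{1}^{p-2}$) merely makes explicit what the paper leaves implicit.
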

\begin{proof}
Let $k\in \N, k\geq 2$. The resultant of $h_{k,1,p}$ and $h_{0,1,p}$ is
$$\mbox{Res}(h_{k,1,p},h_{0,1,p})=\mbox{Res}(h_{k,1,p},\beta-\alpha_{1})= (-1)^{\epsilon} h_{k,1,p}(\alpha_{1},\alpha_{2},\cdot \cdot \cdot,\alpha_{p-2},\alpha_{1}),$$ 
where $\epsilon$ is the degree of $h_{k,1,p}$ as a polynomial in $\beta$.
From Lemma \ref{Eisen2}, the lowest degree homogeneous part of $\mbox{Res}(h_{k,1,p},h_{0,1,p})$ is 
$$(-1)^{\epsilon} p(\alpha_{1} + \sum_{i=1}^{p-2} \alpha_{i})\prod_{i=2}^{p-2} (\alpha_{1}-\alpha_{i})\not\equiv 0 \;\;\;(\text{mod } p^{2}).$$ 
Hence, the corollary is proved.
\end{proof}
Next we will study the resultant $\mbox{Res}(h_{1,1,p},h_{0,1,p})$. For that, we first need to figure out what $h_{1,1,p}$ looks like.
\begin{lemma}\label{h11p}
    The following equality holds,
    \begin{equation*}
        h_{1,1,p}= \frac{\hat{f}_{1,1,p}}{(\beta-\alpha_{1})^{2}}.
    \end{equation*}
\end{lemma}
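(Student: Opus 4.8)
The plan is to pin down the exact power of $\beta-\alpha_1$ dividing $\hat f_{1,1,p}$. By the definition of $h_{k,n,d}$, one obtains $h_{1,1,p}$ from $\hat f_{1,1,p}$ by dividing out, to its maximal power, every distinct irreducible factor of the polynomials $\hat f_{l,m,p}$ with $0\le l\le 1$, $m\mid 1$ and $(l,m)\ne(1,1)$; the only such pair is $(l,m)=(0,1)$, and $\hat f_{0,1,p}=\widehat{f(\alpha_1)-\alpha_1}=\beta-\alpha_1$, which is irreducible in $\Q[p-2]$. Thus $h_{1,1,p}=\hat f_{1,1,p}/(\beta-\alpha_1)^{a}$, where $a$ is the highest power of $\beta-\alpha_1$ dividing $\hat f_{1,1,p}$, and it remains to prove $a=2$.

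Next I would make $\hat f_{1,1,p}$ explicit. Since $\hat f(\alpha_1)=\widehat{f(\alpha_1)}=\beta$, we have $\hat f_{1,1,p}=\hat f^{2}(\alpha_1)-\hat f(\alpha_1)=\hat f(\beta)-\beta=\hat f(\beta)-\hat f(\alpha_1)$, so applying the hat map to Equation \eqref{frac} with $z=\beta$, $w=\alpha_1$ gives $\hat f_{1,1,p}=(\beta-\alpha_1)\,\Theta(\beta)$, where $\Theta(\beta):=\Phi_p(\beta,\alpha_1)+\sum_{i=2}^{p-1}(-1)^{i}\frac{p}{p-i}\widehat{s_i(\bar\alpha)}\Phi_{p-i}(\beta,\alpha_1)\in\Q[p-2]$. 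Using $\Phi_n(z,z)=nz^{n-1}$ together with Equation \eqref{a}, substituting $\beta=\alpha_1$ yields $\Theta(\alpha_1)=\hat f'(\alpha_1)=0$ (as $\alpha_1$ is the marked critical point), so $\beta-\alpha_1$ divides $\Theta$; write $\Theta(\beta)=(\beta-\alpha_1)\,\Xi(\beta)$ with $\Xi\in\Q[p-2]$. Then $a=2$ follows once we verify $\Xi(\alpha_1)\ne0$, i.e. $\beta-\alpha_1\nmid\Xi$.

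To evaluate $\Xi(\alpha_1)$, I would differentiate the identity $\hat f_{1,1,p}=(\beta-\alpha_1)^{2}\Xi(\beta)$ twice with respect to $\beta$ and specialize at $\beta=\alpha_1$, obtaining $2\,\Xi(\alpha_1)=\tfrac{d^{2}}{d\beta^{2}}\bigl(\hat f(\beta)-\beta\bigr)\big|_{\beta=\alpha_1}=\hat f''(\alpha_1)$, since the $\beta$ inside $\hat f(z)$ enters only the (in $z$) constant term. Finally, $\hat f''(\alpha_1)\ne0$ because $\alpha_1$ is a \emph{simple} root of $\hat f'(z)=p\prod_{i=1}^{p-1}(z-\hat\alpha_i)$: the remaining roots $\alpha_2,\dots,\alpha_{p-2}$ and $-\sum_{i=1}^{p-2}\alpha_i$ are pairwise distinct from $\alpha_1$ as polynomials (for the last, $\alpha_1-\hat\alpha_{p-1}=2\alpha_1+\alpha_2+\dots+\alpha_{p-2}\ne0$), so $\hat f''(\alpha_1)=p\prod_{i=2}^{p-1}(\alpha_1-\hat\alpha_i)\ne0$. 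Hence $a=2$ and $h_{1,1,p}=\hat f_{1,1,p}/(\beta-\alpha_1)^{2}$.

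This runs parallel to the $k\ge2$ case of Lemma \ref{Eisen2}, with the single factor $\beta-\alpha_1$ now playing the role of $\hat f_{k-1,1,p}$. I do not expect a genuine obstacle; the only points needing care are the bookkeeping of which $\hat f_{l,m,p}$ enter the definition of $h_{1,1,p}$ and, above all, the verification that $\alpha_1$ is a \emph{simple} (not a multiple) critical point of $\hat f$ — it is precisely this simplicity, via $\hat f''(\alpha_1)\ne0$, that forces $a$ to be exactly $2$ rather than $\ge3$. (One can equally phrase the whole computation by Taylor expanding $\hat f_{1,1,p}$ in $\beta$ around $\beta=\alpha_1$ over the field $\Q(\alpha_1,\dots,\alpha_{p-2})$ and reading off that the coefficients of $(\beta-\alpha_1)^{0}$ and $(\beta-\alpha_1)^{1}$ vanish while that of $(\beta-\alpha_1)^{2}$, namely $\tfrac12\hat f''(\alpha_1)$, does not; since $\beta-\alpha_1$ is monic in $\beta$, this order of vanishing equals $a$ in $\Q[p-2]$.)
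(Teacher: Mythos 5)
Your proposal is correct and follows essentially the same route as the paper: both reduce the lemma to showing that the order of vanishing of $\hat f_{1,1,p}$ along $\beta-\alpha_1$ is exactly $2$, via the Taylor expansion of $f$ at $\alpha_1$ and the non-vanishing of $f''(\alpha_1)$. Your explicit verification that $\hat f''(\alpha_1)=p\prod_{i=2}^{p-1}(\alpha_1-\hat\alpha_i)$ is a nonzero polynomial is a welcome detail that the paper only asserts.
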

\begin{proof}
From the proof of Lemma \ref{Eisen2}, we know that $h_{1,1,p}$ divides 
$$\frac{\hat{f}_{1,1,p}}{\hat{f}_{0,1,p}(\beta-\alpha_{1})} = \frac{\hat{f}_{1,1,p}}{(\beta-\alpha_{1})^{2}}.$$

We need to check whether higher powers of $(\beta - \alpha_{1})$ divides $\hat{f}_{1,1,p}$.  As $f'(\alpha_{1})= 0$, we get from Taylor series expansion $f$ at $\alpha_{1}$,
$$\frac{f(z)-f(\alpha_{1})}{(z-\alpha_{1})^{2}} \equiv \frac{f''(\alpha_{1})}{2} \;(\text{mod } z-\alpha_{1}).$$

Putting $z=\beta$ in above, 
\begin{equation}\label{5.5}
    \frac{f_{1,1,p}}{(\beta-\alpha_{1})^{2}} = \frac{f(\beta)-f(\alpha_{1})}{(\beta - \alpha_{1})^{2}} \equiv \frac{f''(\alpha_{1})}{2} \;(\text{mod } \beta - \alpha_{1}).
\end{equation}

As $f''(\alpha_{1})$ is not identically zero, we get from Equation \eqref{5.5},
\begin{equation}\label{5.6}
    h_{1,1,p} = \frac{\hat{f}_{1,1,p}}{(\beta-\alpha_{1})^{2}}.
\end{equation}
\end{proof}

\begin{remark}
    Brief calculation shows that explicit form of $h_{1,1,p}$ is as follows,
    $$h_{1,1,p}:=\sum_{j=0}^{p-1}\alpha_{1}^{j}\Phi_{p-1-j}(\beta,\alpha_{1})+ \sum_{i=2}^{p-1}(-1)^{i}\frac{p\cdot s_{i}(\bar{\alpha})}{p-i}\sum_{j=0}^{p-i-1}\alpha_{1}^{j}\Phi_{p-i-1-j}(\beta,\alpha_{1}).$$ We will not require this explicit form in what follows.
\end{remark}

Now, we can study the resultant of $h_{1,1,p}$ and $h_{0,1,p}$.
\begin{corollary}\label{res11p}
The resultant $\mbox{Res}(h_{1,1,p},h_{0,1,p})\not\equiv 0\;(\text{mod }p^{2})$.
\end{corollary}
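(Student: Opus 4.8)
The plan is to compute $\mbox{Res}(h_{1,1,p},h_{0,1,p})$ explicitly, in direct analogy with Corollary~\ref{resk1p}; the only change is that, since Lemma~\ref{Eisen2} was established only for $k\geq 2$, we must invoke Lemma~\ref{h11p} instead. As $h_{0,1,p}=\beta-\alpha_{1}$ is monic of degree one in $\beta$, the resultant is just the value $h_{1,1,p}(\alpha_{1},\alpha_{2},\ldots,\alpha_{p-2},\alpha_{1})$ obtained by substituting $\beta=\alpha_{1}$.

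To evaluate this, first recall from Lemma~\ref{h11p} that $h_{1,1,p}=\widehat{f_{1,1,p}/(\beta-\alpha_{1})^{2}}$ (using that $(\beta-\alpha_{1})^{2}$ divides $f_{1,1,p}$ in $\Z_{(p)}[p-1]$ and that $\;\hat{}\;$ fixes $\beta-\alpha_{1}$). The substitution $\beta\mapsto\alpha_{1}$ fixes every $\alpha_{i}$, hence commutes with $\;\hat{}\;$, so that $h_{1,1,p}\big|_{\beta=\alpha_{1}}=\widehat{\bigl(f_{1,1,p}/(\beta-\alpha_{1})^{2}\bigr)\big|_{\beta=\alpha_{1}}}$. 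By Equation~\eqref{5.5} — the Taylor expansion of $f$ at $\alpha_{1}$, which uses $f'(\alpha_{1})=0$ — the inner quantity is $f''(\alpha_{1})/2$. Since $f'(z)=p\prod_{i=1}^{p-1}(z-\alpha_{i})$, every term of $f''$ but one carries a factor $(z-\alpha_{1})$, so $f''(\alpha_{1})=p\prod_{i=2}^{p-1}(\alpha_{1}-\alpha_{i})$; applying $\;\hat{}\;$ and using $\widehat{\alpha_{p-1}}=-\sum_{i=1}^{p-2}\alpha_{i}$ yields
\begin{equation*}
\mbox{Res}(h_{1,1,p},h_{0,1,p})=\frac{p}{2}\Bigl(\prod_{i=2}^{p-2}(\alpha_{1}-\alpha_{i})\Bigr)\Bigl(\alpha_{1}+\sum_{i=1}^{p-2}\alpha_{i}\Bigr).
\end{equation*}

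Finally, regarding this as a polynomial in $\alpha_{1}$ over $\Z_{(p)}[\alpha_{2},\ldots,\alpha_{p-2}]$, the first factor is monic of $\alpha_{1}$-degree $p-3$ and the second has $\alpha_{1}$-leading coefficient $2$, so the coefficient of the monomial $\alpha_{1}^{p-2}$ in $\mbox{Res}(h_{1,1,p},h_{0,1,p})$ is exactly $\tfrac{p}{2}\cdot 2=p\in p\Z_{(p)}\setminus p^{2}\Z_{(p)}$. Hence not all coefficients of the resultant lie in $p^{2}\Z_{(p)}$, which is the desired conclusion. No step here presents a real obstacle; the two points that merit a line of justification are the commutation of the substitution $\beta\mapsto\alpha_{1}$ with $\;\hat{}\;$ and with division by $(\beta-\alpha_{1})^{2}$ (legitimising the use of Lemma~\ref{h11p} and Equation~\eqref{5.5}), and the degenerate case $p=3$, where $\prod_{i=2}^{p-2}$ is empty and the formula simply reads $\mbox{Res}(h_{1,1,3},h_{0,1,3})=3\alpha_{1}\not\equiv 0\ (\text{mod }9)$.
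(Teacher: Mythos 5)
Your proposal is correct and follows essentially the same route as the paper: the paper's proof likewise reads off $\mbox{Res}(h_{1,1,p},h_{0,1,p})=\widehat{f''(\alpha_{1})}/2$ from Equations \eqref{5.5} and \eqref{5.6} and observes this is $\not\equiv 0\ (\text{mod }p^{2})$. You have merely made explicit the computation $f''(\alpha_{1})=p\prod_{i=2}^{p-1}(\alpha_{1}-\alpha_{i})$ and the identification of a coefficient exactly divisible by $p$, which the paper leaves implicit.
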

\begin{proof}
    From Equations \eqref{5.5} and \eqref{5.6}, $\mbox{Res}(h_{1,1,p},h_{0,1,p})= -\frac{\widehat{f''(\alpha_{1})}}{2}\not\equiv 0 \;(\text{mod }p^{2})$.
\end{proof}
We are ready to prove irreducibility of $h_{k,1,p}$ over $\Q$.
\begin{theorem}\label{irrq}
For any $k\in \N$, the polynomial $h_{k,1,p}$ is irreducible over $\Q$.
\end{theorem}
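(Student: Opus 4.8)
The plan is to deduce Theorem \ref{irrq} by feeding the results already assembled in this section into the generalised Eisenstein criterion (Theorem \ref{m}), applied with $e=1$ (so that $p^{e}=p$) and with the test polynomial $h=h_{0,1,p}=\beta-\alpha_{1}$ identified in Lemma \ref{h01p}. The statement concerns $k\in\N$, i.e.\ $k\ge 1$; the companion case $k=0$ is in any case immediate from Lemma \ref{h01p}, since $h_{0,1,p}=\beta-\alpha_{1}$ has total degree one and is therefore irreducible over $\Q$.

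For $k\ge 1$ I would invoke Theorem \ref{m} with $g=h_{k,1,p}$ and $h=h_{0,1,p}=\beta-\alpha_{1}$. Both polynomials lie in $\Z_{(p)}[p-2]$ and are monic as polynomials in $\beta$ over $\Z_{(p)}[\alpha_{1},\alpha_{2},\dots,\alpha_{p-2}]$, as recorded at the start of the section; moreover $h_{k,1,p}$ is non-constant because Lemma \ref{Eisen1} gives $h_{k,1,p}\equiv h_{0,1,p}^{N_{k,p}}\;\modp$ with $N_{k,p}\in\N$. This last congruence is exactly hypothesis (1) of Theorem \ref{m}. Hypothesis (2) holds since $h_{0,1,p}=\beta-\alpha_{1}$ reduces modulo $p$ to itself and $\beta-\alpha_{1}$ is linear, hence irreducible in $\F_{p}[p-2]$. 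For hypothesis (3) observe that the degree of $h_{0,1,p}$ in $\beta$ is $1$, so the requirement becomes $\mbox{Res}(h_{k,1,p},h_{0,1,p})\not\equiv 0\;(\text{mod }p^{2})$; this is precisely Corollary \ref{res11p} when $k=1$ and Corollary \ref{resk1p} when $k\ge 2$. Theorem \ref{m} then yields that $h_{k,1,p}$ is irreducible in $\Q[p-2]$, which is the assertion.

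Because all three hypotheses have already been established upstream, there is essentially nothing left at this stage beyond the bookkeeping of citing Lemmas \ref{h01p}, \ref{Eisen1} and Corollaries \ref{resk1p}, \ref{res11p} into Theorem \ref{m}, so I do not expect any real obstacle here. The genuinely delicate point, hidden earlier, is the resultant condition (3): its verification rested on pinning down the lowest-degree homogeneous part of $h_{k,1,p}$, which in turn required the factorisation $h_{k,1,p}=\hat{f}_{k,1,p}/\big(\hat{f}_{k-1,1,p}(\beta-\alpha_{1})\big)$ for $k\ge 2$ (respectively $h_{1,1,p}=\hat{f}_{1,1,p}/(\beta-\alpha_{1})^{2}$) and the weak form of Thurston's rigidity (Theorem \ref{thurs}), used to show that no irreducible factor of $\hat{f}_{k-1,1,p}$ other than $\beta-\alpha_{1}$ persists in $\hat{f}_{k,1,p}/\hat{f}_{k-1,1,p}$. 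Looking ahead, this theorem is only the first half of what is needed for Theorem \ref{theorem1}: one must still upgrade irreducibility over $\Q$ to irreducibility over $\C$, which the introduction indicates will be carried out by factoring the lowest-degree homogeneous part of $h_{k,1,p}$ into linear factors over $\Q$ and combining this with $\Q$-irreducibility.
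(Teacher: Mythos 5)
Your proposal is correct and matches the paper's own proof essentially verbatim: both apply Theorem \ref{m} with $g=h_{k,1,p}$, $h=h_{0,1,p}=\beta-\alpha_{1}$, citing Lemmas \ref{h01p}, \ref{Eisen1} and Corollaries \ref{resk1p}, \ref{res11p} for the three hypotheses. Your observation that $\deg_{\beta}(h_{0,1,p})=1$ reduces condition (3) to nonvanishing modulo $p^{2}$ is exactly the form in which the paper verifies it.
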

\begin{proof}
 Fix $e=1,\; g=h_{k,1,p},\; h=h_{0,1,p}=\beta-\alpha_{1}$, in the generalised Eisenstein irreducibility criterion (Theorem \ref{m}). The Lemmas \ref{h01p}, \ref{Eisen1} and Corollaries \ref{resk1p}, \ref{res11p} show that all the three conditions of Theorem \ref{m} holds. Hence, $h_{k,1,p}$ is $p$-Eisenstein with respect to the polynomial $h_{0,1,p}$, for every $k\in \N$. So, $h_{k,1,p}$ is irreducible over $\Q$, for every choice of $k\in \N$. 
\end{proof}
Next, we extend the irreducibility of $h_{k,1,p}$ from over $\Q$ to over $\C$, which, by Corollary \ref{coro1}, is enough to prove the following theorem.
\begin{theorem}\label{irrc}
For any $k\in \N\cup \{0\}$, the set $\Sigma_{k,1,p}$ is an irreducible quasi-affine variety.
\end{theorem}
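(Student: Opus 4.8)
The plan is to bootstrap the irreducibility of $h_{k,1,p}$ over $\Q$ (Theorem \ref{irrq}) up to irreducibility over $\C$; by Corollary \ref{coro1} this is exactly what Theorem \ref{irrc} asserts. The case $k=0$ is immediate since $h_{0,1,p}=\beta-\alpha_1$ is linear by Lemma \ref{h01p}, so fix $k\ge 1$.

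The tool I would isolate is the following elementary principle. Suppose $g\in\Q[\alpha_1,\dots,\alpha_{p-2},\beta]$ is irreducible over $\Q$ but factors over $\overline{\Q}$; write $g=c\,g_1\cdots g_m$ with the $g_i\in\overline{\Q}[\alpha_1,\dots,\alpha_{p-2},\beta]$ irreducible and $m\ge 2$. Then $G_\Q$ permutes the $g_i$ up to scalars, and it does so transitively (the normalized product over a single orbit is a $G_\Q$-fixed, hence $\Q$-rational, proper factor of $g$, contradicting irreducibility over $\Q$). Lowest-degree homogeneous parts are multiplicative in the domain $\overline{\Q}[\alpha_1,\dots,\alpha_{p-2},\beta]$, so writing $g^{(0)}$ for the lowest-degree homogeneous part of $g$ we get $g^{(0)}=c\prod_i g_i^{(0)}$, and $G_\Q$ permutes the $g_i^{(0)}$ by the same transitive action (it moves only coefficients, preserving degrees). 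Hence if $g^{(0)}$ has a linear factor $\ell$ defined over $\Q$ of multiplicity exactly one, the unique $i$ with $\ell\mid g_i^{(0)}$ is fixed by all of $G_\Q$, contradicting transitivity together with $m\ge 2$. So such a $g$ is already irreducible over $\C$. The identical argument applies with ``lowest-degree homogeneous part'' replaced by ``restriction to a fixed $\Q$-rational hyperplane on which $g$ does not vanish identically'', since such a restriction is multiplicative, $G_\Q$-equivariant, and kills no $g_i$.

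For $k\ge 2$ this applies directly: Lemma \ref{Eisen2} identifies the lowest-degree homogeneous part of $h_{k,1,p}$ as $p\,(\beta+\sum_{i=1}^{p-2}\alpha_i)\prod_{i=2}^{p-2}(\beta-\alpha_i)$, a product of pairwise non-proportional linear forms over $\Q$, so $\ell=\beta-\alpha_2$ (or $\ell=\beta+\alpha_1$ when $p=3$) is a simple rational linear factor and $h_{k,1,p}$ is irreducible over $\C$. For $k=1$ one first notes that $h_{1,1,p}$ is itself homogeneous of degree $p-2$: indeed $f_{1,1,p}=f(\beta)-f(\alpha_1)$ is homogeneous of degree $p$ (the part of $f$ that is constant in $z$ cancels in the difference), hence so is $\widehat{f_{1,1,p}}$, and by Lemma \ref{h11p} one divides by $(\beta-\alpha_1)^2$. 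Thus the lowest-degree trick is vacuous, and instead I would restrict to the $\Q$-rational hyperplane $\{\beta=\alpha_1\}$: by Corollary \ref{res11p}, together with the identity $f''(\alpha_1)=p\prod_{i=2}^{p-1}(\alpha_1-\alpha_i)$ (which follows from $f'(z)=p\prod_{i=1}^{p-1}(z-\alpha_i)$ by noting every term of $f''(\alpha_1)$ but one carries a factor $\alpha_1-\alpha_1$), the restriction of $h_{1,1,p}$ equals $\tfrac12\widehat{f''(\alpha_1)}=\tfrac p2\,(2\alpha_1+\sum_{i=2}^{p-2}\alpha_i)\prod_{i=2}^{p-2}(\alpha_1-\alpha_i)$, a nonzero product of pairwise non-proportional $\Q$-linear forms (when $p=3$ the polynomial $h_{1,1,3}$ is already linear). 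Taking $\ell=\alpha_1-\alpha_2$ and applying the hyperplane-restriction version of the principle finishes the case $k=1$, hence Theorem \ref{irrc}.

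The only genuinely delicate point I anticipate is this boundary case $k=1$: one must spot the accidental homogeneity of $h_{1,1,p}$, which renders the clean lowest-degree argument empty, and replace it with a $\Q$-rational slice on which $h_{1,1,p}$ neither vanishes identically nor degenerates as a product of coinciding linear forms. Corollary \ref{res11p} makes $\{\beta=\alpha_1\}$ the natural candidate and reduces the verification to the shape of $\widehat{f''(\alpha_1)}$. Everything else — the $G_\Q$-orbit structure of the $\overline{\Q}$-factors of a $\Q$-irreducible polynomial and the multiplicativity of initial forms — is routine, and the factorizations of the relevant initial forms needed to conclude are already recorded in Lemmas \ref{Eisen2} and \ref{h11p} and Corollary \ref{res11p}.
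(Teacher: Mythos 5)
Your proposal is correct and follows essentially the same route as the paper: reduce via Corollary \ref{coro1} and Theorem \ref{irrq} to promoting $\Q$-irreducibility of $h_{k,1,p}$ to $\C$-irreducibility, then use the transitive $G_\Q$-action on the $\overline{\Q}$-factors together with a simple $\Q$-rational linear factor of the lowest-degree form (for $k\ge 2$, via Lemma \ref{Eisen2}) or of the restriction to $\{\beta=\alpha_1\}$ (for $k=1$, via Lemma \ref{h11p} and Corollary \ref{res11p}), which is exactly the paper's argument. Your added observation that $h_{1,1,p}$ is homogeneous cleanly explains why $k=1$ needs the hyperplane slice rather than the initial-form argument, but it does not change the substance.
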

\begin{proof}
 By Corollary \ref{coro1}, it is enough to show that $h_{k,1,p}$ is irreducible over $\C$.
 
For $k=0$, this is trivial.

Let $k=1$. Recall that the derivative of $f$ is of the form, 
$$f'(z)= p \prod_{i=1}^{p-1} (z - \alpha_{i}).$$
From Equation \eqref{5.6}, we get 
\begin{equation*}
    h_{1,1,p}\equiv \frac{\widehat{f''(\alpha_{1})}}{2}\equiv \frac{p}{2} \prod_{i=2}^{p-1}(\alpha_{1}-\hat{\alpha_{i}})\;(\text{mod }\beta-\alpha_{1}).
\end{equation*}
So, 
 \begin{equation*}
     h_{1,1,p}=\frac{p}{2}\prod_{i=2}^{p-1}(\beta-\hat{\alpha_{i}}) + (\beta-\alpha_{1})t_{p},
 \end{equation*}
for some polynomial $t_{p}\in \Z_{(p)}[p-2]$. Let us assume that $h_{1,1,p}$ is reducible over $\C$. Up to a rearrangement of $\alpha_{i}$'s, $2\leq i\leq p-1$, there exists an irreducible factor $g$ of $h_{1,1,p}$ of the form 
  \begin{equation*}
      g=\left(\prod_{i=2}^{l}(\beta-\hat{\alpha_{i}})\right)+(\beta-\alpha_{1})t,
  \end{equation*}
   for some $l\in \N,2\leq l\leq p-1, $ and $ t\in \C[p-2]$.

Consider $G_{\Q}$, the absolute Galois group of $\Q$. For any $\sigma \in G_{\Q}$, we have $\sigma(g)$ divides $h_{1,1,p}$. Observe that the first summand in the expression of $g$ above is defined over $\Q$. Hence, if $\sigma(g)$ is a constant multiple of $g$ then $\sigma(g)=g$. Otherwise, if $\sigma(g)\neq g$, then $g$ being irreducible, $g\cdot \sigma(g)$ divides $h_{1,1,p}$. But that is not possible as the first summand in the expression of $h_{1,1,p}$ has all simple factors and they are all defined over $\Q$. So, $\sigma(g)=g$. As $\sigma \in G_{\Q}$ was arbitrarily chosen, we get that $g\in \Q[p-2]$. But, $h_{1,1,p}$ is irreducible over $\Q$, by Theorem \ref{irrq}. We arrive at a contradiction. Hence, $h_{1,1,p}$ is irreducible over $\C$.

For $k \geq 2$, from Lemma \ref{Eisen2}, we have that the lowest degree homogeneous part of $h_{k,1,p}$ is 
\begin{equation*}
    p\prod_{i=2}^{p-1}(\beta-\hat{\alpha_{j}}).
\end{equation*}
 Observe that the lowest degree homogeneous part of $h_{k,1,p}$ has all simple factors and they are all defined over $\Q$. Similarly as the $k=1$ case, one sees that any irreducible factor of $h_{k,1,p}$ must lie in $\Q[p-2]$. As $h_{k,1,p}$ is irreducible over $\Q$ by Theorem \ref{irrq}, it is also irreducible over $\C$.
\end{proof}

\section{mixed critical cases: The cases in between the unicritical and separably critical case}\label{mixcritical}
Let $k\geq 0, p>3, p$ prime. For a degree $p$ polynomial $f$ with marked critical point $\alpha_{1}$, 
we define \emph{ramification index} of the pair $(f, \alpha_{1})$ to be the ramification index of $\alpha_{1}$ under the map $f$, which is the multiplicity of $\alpha_{1}$ as a root of the equation $f(z)-f(\alpha_{1})$. As $\alpha_{1}$ is a critical point of a degree $p$ polynomial $f$, ramification index of the pair $(f,\alpha_{1})$ varies between $2$ and $p$. By definition, $\mathcal{M}_{p}$ is the space of affine conjugacy classes of pairs $(f,\alpha_{1})$, where $f$ is a degree $p$ polynomial and $\alpha_{1}$ is one of its finite critical points. Ramification index stays invariant under affine conjugation of polynomials with a marked critical point. Hence, for any point in $\mathcal{M}_{p}$, its \emph{ramification index} is well-defined and it lies between $2$ and $p$.

For any point $(\alpha_{1},\alpha_{2},\cdot \cdot \cdot,\alpha_{p-2},\beta)\in \mathcal{M}_{p}$, we call it \emph{separably critical} if its finite critical points $\alpha_{1},\alpha_{2},\cdot \cdot \cdot,\alpha_{p-2}, \alpha_{p-1}:=-(\sum_{i=1}^{p-2}\alpha_{i})$ are all distinct and \emph{unicritical} if $\alpha_{1}=\alpha_{2}=\cdot \cdot \cdot.=\alpha_{p-1}=0$. A polynomial is separably critical if and only if any finite critical point of it has ramification index $2$. A polynomial in $\mathcal{M}_{p}$ is unicritical iff it has a unique finite critical point of ramification index $p$.

Observe that the set of all separably critical points of $\Sigma_{k,1,p}$ is non-empty and open in $\Sigma_{k,1,p}$. As $\Sigma_{k,1,p}$ is irreducible and quasi-affine, the set of all separably critical points in $\Sigma_{k,1,p}$ is also an irreducible quasi-affine variety. In this section, we will study similar irreducibility questions for mixed critical cases. A point in $\mathcal{M}_{p}$ is called \emph{mixed critical} iff its ramification index $e$ satisfies $2 < e < p$.

Fix $j,\; 2\leq j \leq p-2$. Let $\Sigma_{j}:= \Sigma_{k,1,p} \cap V_{j}, \text{ where }V_{j} := \{(\alpha_{1},\alpha_{2},\cdot \cdot \cdot,\alpha_{p-2},\beta) \in \mathcal{M}_{p}\;|\; \alpha_{1}=\alpha_{2}=\cdot \cdot \cdot=\alpha_{j}\}$. The set $\Sigma_{j}$ is the set of all points of $\Sigma_{k,1,p}$ for which the ramification index is $\geq j+1$. This is a quasi-affine subset of the affine variety $V_{j}$.  Here, we will show that $\Sigma_{j}$ is irreducible, for every $j,\; 2\leq j\leq p-2$.

\begin{lemma}
    \label{lem:f_k,1,p}
    For any $k \geq 1, p > 3, p$ prime and $2 \le j \le p-2$, the polynomial $\hat{f}_{k,1,p}$ is contained in the ideal $((\beta - \alpha_{1})^{j+1}, \alpha_{1} - \alpha_{2}, \alpha_{1} - \alpha_{3}, \ldots, \alpha_{1} - \alpha_{j})$.
\end{lemma}

\begin{proof}
    Observe that to prove the lemma, it is enough to show that $f_{k,1,p}$ is contained in the same ideal. As $f_{1,1,p}$ divides $f_{k,1,p}$ for all $k \ge 1$, it is enough to show that $f_{1,1,p}$ lies in the ideal $((\beta - \alpha_{1})^{j+1}, \alpha_{1} - \alpha_{2}, \alpha_{1} - \alpha_{3}, \ldots, \alpha_{1} - \alpha_{j})$. In other words, we want to show that modulo the ideal $(\alpha_{1} - \alpha_{2}, \alpha_{1} - \alpha_{3}, \ldots, \alpha_{1} - \alpha_{j})$, $f_{1,1,p}$ is divisible by $(\beta - \alpha_{1})^{j+1}$.

    Consider the Taylor series expansion of $f$ at $\alpha_{1}$ (note that $f'(\alpha_{1})=0$),

    \[
    f(z) = f(\alpha_{1}) + \frac{f''(\alpha_{1})}{2} (z-\alpha_{1})^{2} + \frac{f'''(\alpha_{1})}{3!} (z-\alpha_{1})^{3} + \ldots.
    \]

    As $\alpha_{1}, \alpha_{2}, \ldots, \alpha_{p-1}$ are critical points of $f(z)$, considering $f(z)$ modulo the ideal $(\alpha_{1}- \alpha_{2}, \alpha_{1} - \alpha_{3}, \ldots, \alpha_{1} - \alpha_{j})$ is the same as considering the ramification index of $\alpha_{1}$ in $f$ to be $j+1$. Thus, 
    \[
    f(z) \equiv f(\alpha_{1}) + \frac{f^{'(j+1)}(\alpha_{1})}{(j+1)!} (z-\alpha_{1})^{j+1} + \ldots \left(\text{mod } (\alpha_{1} - \alpha_{2}, \alpha_{1} - \alpha_{3}, \ldots, \alpha_{1} - \alpha_{j})\right).
    \]

    Replacing $z$ with $\beta$ in the equation above, we see

    \begin{equation}
        \label{eq:6.1}
    f_{1,1,p} = f(\beta) - f(\alpha_{1}) \equiv \frac{f^{'(j+1)}(\alpha_{1})}{(j+1)!} (\beta -\alpha_{1})^{j+1} \left(\text{mod } (\alpha_{1} - \alpha_{2}, \alpha_{1} - \alpha_{3}, \ldots, \alpha_{1} - \alpha_{j})\right).
    \end{equation}

    Hence, the lemma is proved.
\end{proof}

\begin{corollary}
    \label{cor:h_k,1,p}
    For any $k \in \N, p > 3, p$ prime and $2 \le j \le p-2$, the polynomial $h_{k,1,p}$ is contained in the ideal $((\beta - \alpha_{1})^{j-1}, \alpha_{1} - \alpha_{2}, \alpha_{1} - \alpha_{3}, \ldots, \alpha_{1} - \alpha_{j})$.
\end{corollary}

\begin{proof}
    For $k=1$, the lemma follows from Equation \eqref{eq:6.1} and Lemma \ref{h11p}. Let $k \geq 2$.
    From Equation \eqref{eq:5.4},
    \[
    \frac{\hat{f}_{k,1,p}}{\hat{f}_{k-1,1,p}} = p \left( \prod_{i=1}^{p-1} \left( \hat{f}^{k-1}(\alpha_{1}) - \hat{\alpha_{i}} \right) \right) + t \hat{f}_{k-1,1,p}, 
    \]
    for some polynomial $t$. From Lemma \ref{lem:f_k,1,p}, $\hat{f}_{k-1,1,p}$ is divisible by $(\beta- \alpha_{1})^{j+1}$, modulo the ideal $(\alpha_{1} - \alpha_{2}, \alpha_{1} - \alpha_{3}, \ldots, \alpha_{1} - \alpha_{j})$. On the other hand, modulo $(\alpha_{1} - \alpha_{2}, \alpha_{1} - \alpha_{3}, \ldots, \alpha_{1} - \alpha_{j})$,

    \[
    \prod_{i=1}^{p-1} \left( \hat{f}^{k-1}(\alpha_{1}) - \hat{\alpha_{i}} \right) \equiv \left( \hat{f}^{k-1}(\alpha_{1}) - \hat{\alpha_{1}} \right)^{j} \prod_{i=j+1}^{p-1} \left( \hat{f}^{k-1}(\alpha_{1}) - \hat{\alpha_{i}} \right)
    \]

    As $\hat{f}_{0,1,p}=(\beta - \alpha_{1})$ divides $\hat{f}_{0,k-1,p} = \hat{f}^{k-1}(\alpha_{1}) - \hat{\alpha_{1}}$, we see

    \[
    h_{k,1,p} = \frac{\hat{f}_{k,1,p}}{\hat{f}_{k-1,1,p} (\beta - \alpha_{1})} \in \left( (\beta-\alpha_{1})^{j-1}, \alpha_{1} - \alpha_{2}, \alpha_{1} - \alpha_{3}, \ldots, \alpha_{1} - \alpha_{j} \right). 
    \]
\end{proof}

\begin{theorem}\label{inbetween}
   Let $k \in \N, p \text{ prime }, p>3, 2\leq j\leq p-2$. The set $\Sigma_{j}:= \Sigma_{k,1,p} \cap V_{j}$ is an irreducible quasi-affine subvariety of $\mathcal{M}_{p}$.
\end{theorem}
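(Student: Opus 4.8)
The plan is to run the proof of Theorem~\ref{irrc} after the specialization that identifies the first $j$ critical points. Let $\pi$ be the $\Z_{(p)}$-algebra homomorphism from $\Z_{(p)}[\alpha_1,\dots,\alpha_{p-2},\beta]$ to $\Z_{(p)}[\alpha,\alpha_{j+1},\dots,\alpha_{p-2},\beta]$ sending $\alpha_1,\dots,\alpha_j\mapsto\alpha$ and fixing the other variables, and set $\tilde f:=\pi(\hat f)$, $\tilde\alpha_i:=\pi(\hat\alpha_i)$, so that $\tilde\alpha_1=\dots=\tilde\alpha_j=\alpha$, $\tilde\alpha_{p-1}=-\bigl(j\alpha+\sum_{i=j+1}^{p-2}\alpha_i\bigr)$, and $\alpha$ is a critical point of $\tilde f$ of multiplicity $j$. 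Since $\pi$ commutes with composition in $z$, one has $\pi(\hat f_{k,1,p})=\tilde f^{k+1}(\alpha)-\tilde f^{k}(\alpha)=:\tilde f_{k,1,p}$, and I would define $h_j$ by dividing $\tilde f_{k,1,p}$ by the spurious factors coming from the lower-preperiod loci $\tilde f_{l,1,p}$ ($0\le l<k$), each raised to the highest power dividing $\tilde f_{k,1,p}$. The analogues of Lemmas~\ref{l}, \ref{k}, \ref{h} and of Corollary~\ref{coro1} hold here essentially unchanged --- their proofs use only divisibility, separability of irreducible factors, and (as in Proposition~\ref{affvar}) the invariance of $V_j$ under the relevant finite group --- so $\Sigma_j$ is the image in $\mathcal{M}_p$ of a Zariski-dense open subset of the algebraic set of $h_j$, and it suffices to prove that $h_j$ is irreducible over $\C$. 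The case $k=0$ is immediate since $h_j=\beta-\alpha$ is linear, so assume $k\ge1$.

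For irreducibility over $\Q$ I would apply the generalised Eisenstein criterion (Theorem~\ref{m}) with the prime $p$ and $h=\beta-\alpha$. First, $h_j$ is monic and non-constant in $\beta$; non-constancy uses $j\le p-2$ (for $j=p-1$ the polynomial is constant in $\beta$). Condition~(2) is immediate. For condition~(1), modulo $p$ one has $\tilde f(z)\equiv z^{p}-\alpha^{p}+\beta$, so the computation of Lemma~\ref{Eisen1} gives $\tilde f_{k,1,p}\equiv(\beta-\alpha)^{p^{k}}\pmod p$, hence $h_j\equiv(\beta-\alpha)^{N}\pmod p$ for some $N\in\N$. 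For condition~(3) I would prove the analogues of Lemmas~\ref{Eisen2} and~\ref{h11p}: when $k\ge2$, a common irreducible factor of $\tilde f_{k-1,1,p}$ and $\tilde f_{k,1,p}/\tilde f_{k-1,1,p}$ divides $\tilde f'\bigl(\tilde f^{k-1}(\alpha)\bigr)=p\bigl(\tilde f^{k-1}(\alpha)-\alpha\bigr)^{j}\prod_{i=j+1}^{p-1}\bigl(\tilde f^{k-1}(\alpha)-\tilde\alpha_i\bigr)$, and the form of Thurston's rigidity used in the proof of Lemma~\ref{Eisen2} --- which survives specialization, because its mod-$p$ argument only sees the pairwise distinct linear forms $\beta-\alpha$ and $\alpha_i-\alpha$ ($i\ge j+1$) --- rules out the factors with $i\ge j+1$; so $h_j=\tilde f_{k,1,p}/\bigl(\tilde f_{k-1,1,p}(\beta-\alpha)^{t}\bigr)$ for some $t\ge0$ (for $k=1$ the spurious part is a power of $\beta-\alpha$ alone). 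Pinning down $t$ --- the main computation --- I would carry out by a local analysis along $\{\beta=\alpha\}$: writing $\tilde f(z)-\beta=(z-\alpha)^{j+1}\tilde u(z)$ with $\tilde u(\alpha)=\tfrac{p}{j+1}\prod_{i=j+1}^{p-1}(\alpha-\tilde\alpha_i)\neq0$ (reflecting the ramification index $j+1$ of $\alpha$) and tracking the orbit of $\alpha$, one finds that $\tilde f_{l,1,p}$ vanishes to order $jl+1$ along $\{\beta=\alpha\}$, whence $t=j$ for $k\ge2$ and $h_j=\tilde u(\beta)$ (homogeneous of degree $p-1-j$) for $k=1$. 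In either case $\mathrm{Res}(h_j,\beta-\alpha)=h_j|_{\beta=\alpha}$ has $p$-adic valuation exactly $1$ --- its lowest-degree part is $p\prod_{i=j+1}^{p-1}(\alpha-\tilde\alpha_i)$, resp.\ $\tilde u(\alpha)$ --- so condition~(3) holds, and Theorem~\ref{m} gives irreducibility over $\Q$.

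For the passage from $\Q$ to $\C$ I would mimic the Galois-descent argument in the proof of Theorem~\ref{irrc}. The point --- and the second place where $j\le p-2$ is essential --- is that the lowest-degree homogeneous part of $h_j$, namely $p\prod_{i=j+1}^{p-1}(\beta-\tilde\alpha_i)$ when $k\ge2$ (resp.\ the reduction $\tilde u(\alpha)=\tfrac{p}{j+1}\prod_{i=j+1}^{p-1}(\alpha-\tilde\alpha_i)$ of $h_j$ modulo $\beta-\alpha$ when $k=1$) is a product of \emph{pairwise distinct} linear forms, all defined over $\Q$ --- distinctness holding because $\alpha_{j+1},\dots,\alpha_{p-2}$ are distinct variables and $\tilde\alpha_{p-1}$ is a linear form distinct from each of $\alpha,\alpha_{j+1},\dots,\alpha_{p-2}$ --- and of \emph{positive} degree, precisely because not all critical points have been identified (for $j=p-1$ this collapses to the constant $p$, which is exactly why the unicritical case must be handled separately in Section~\ref{unicritical}). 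If $h_j$ were reducible over $\C$ it would have a $\C$-irreducible factor $g$ whose corresponding form is nonconstant, hence equal to $c\prod_{i\in S}(\beta-\tilde\alpha_i)$ (resp.\ $c\prod_{i\in S}(\alpha-\tilde\alpha_i)$) for some nonempty $S\subseteq\{j+1,\dots,p-1\}$; for each $\sigma\in G_\Q$ the factor $\sigma(g)$ has form $\sigma(c)$ times the same product, so if $\sigma(g)$ were not a scalar multiple of $g$ then $g\,\sigma(g)\mid h_j$ would force a repeated linear factor in a squarefree form, which is impossible. Hence each such $g$ is $G_\Q$-stable, so (normalized to be monic in $\beta$) lies in $\Q[\alpha,\alpha_{j+1},\dots,\alpha_{p-2},\beta]$; since $h_j$ is irreducible over $\Q$, $g$ must be a scalar multiple of $h_j$, and $h_j$ is irreducible over $\C$. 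This proves $\Sigma_j$ is irreducible.

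The step I expect to be the main obstacle is the determination of $t$: the single local computation at $\{\beta=\alpha\}$ has to deliver both the resultant condition of the Eisenstein step and the squarefree $\Q$-rational leading form needed for the descent to $\C$, and it is precisely this ramification-index bookkeeping --- the multiplicity $j+1$ of $\alpha$ as a root of $\tilde f(z)-\beta$ --- that degenerates at $j=p-1$.
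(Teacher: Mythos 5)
Your proof is correct, and it follows the paper's overall strategy (specialize to $V_j$, prove irreducibility over $\Q$ via the generalised Eisenstein criterion with respect to $\beta-\alpha$, then descend to $\C$ using a squarefree $\Q$-rational lowest-degree form). But your careful determination of the exponent $t$ is not just a technical refinement: it is exactly the point where the paper's own argument breaks, and your version repairs it. The paper works with the bare specialization $h_{k,1,p}^{j}:=h_{k,1,p}(\alpha_1,\dots,\alpha_1,\alpha_{j+1},\dots,\alpha_{p-2},\beta)$ and asserts in one line that it is again $p$-Eisenstein with respect to $\beta-\alpha_1$, hence irreducible over $\Q$ and then over $\C$. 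Your computation of the vanishing order of $\tilde f_{l,1,p}$ along $\{\beta=\alpha\}$ (namely $jl+1$, driven by the jump of the ramification index from $2$ to $j+1$) shows this assertion is false: since $h_{k,1,p}=\hat f_{k,1,p}/\bigl(\hat f_{k-1,1,p}(\beta-\alpha_1)\bigr)$, the $(\beta-\alpha)$-adic valuation of its specialization is $(jk+1)-(j(k-1)+1)-1=j-1\geq 1$ throughout the range $2\leq j\leq p-2$ (and $j+1-2=j-1$ for $k=1$). So $h_{k,1,p}^{j}$ is divisible by $(\beta-\alpha_1)^{j-1}$, its resultant with $\beta-\alpha_1$ is identically zero (condition (3) of Theorem \ref{m} fails outright), and it is reducible over $\Q$. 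The correct object is precisely your $h_j=\tilde f_{k,1,p}/\bigl(\tilde f_{k-1,1,p}(\beta-\alpha)^{j}\bigr)$, i.e.\ $h_{k,1,p}^{j}/(\beta-\alpha)^{j-1}$; the theorem itself survives because $\Sigma_j$ avoids the fixed-point locus $\{\beta=\alpha_1\}$, so only the component $V(h_j)$ matters.

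The remaining steps of your argument all check out: the specialized Thurston coprimality goes through because the relevant leading forms mod $p$ are $\beta-\alpha$ and $\tilde\alpha_i-\alpha$ for $i\geq j+1$, which stay pairwise coprime (note $\alpha-\tilde\alpha_{p-1}=(j+1)\alpha+\sum_{i>j}\alpha_i$ is nonzero mod $p$ exactly because $3\leq j+1\leq p-1$); the lowest-degree form of $h_j$ for $k\geq2$ is $\tilde f'(\beta)/(\beta-\alpha)^{j}=p\prod_{i=j+1}^{p-1}(\beta-\tilde\alpha_i)$, which is squarefree, rational, of positive degree $p-1-j$, and has nonzero evaluation at $\beta=\alpha$ of $p$-adic valuation exactly $1$; and the $k=1$ case with $h_j=\tilde u(\beta)$ and $\tilde u(\alpha)=\tfrac{p}{j+1}\prod_{i>j}(\alpha-\tilde\alpha_i)$ is handled correctly. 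The only cosmetic caveat is that the hypotheses as stated in Theorem \ref{inbetween} use $V_j=\{\alpha_1=\cdots=\alpha_j\}$, so your $j$ matches the paper's and the ramification index is $\geq j+1$; your bookkeeping is consistent with this.
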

\begin{proof}
Similar to the separably critical case, to prove irreducibility of $\Sigma_{j}$ it is enough to show that the ideal $$(h_{k,1,p}, \alpha_{1}-\alpha_{2},\alpha_{1}-\alpha_{3},\cdot \cdot \cdot,\alpha_{1}-\alpha_{j}),$$
is prime in $\C[\alpha_{1},\alpha_{2},\cdot \cdot \cdot,\alpha_{p-2},\beta]$.
For that, it is enough to show that the polynomial $$h_{k,1,p}^{j}:= h_{k,1,p}(\alpha_{1},\alpha_{1},\cdot \cdot \cdot,\alpha_{1},\alpha_{j+1},\cdot \cdot \cdot,\alpha_{p-2},\beta),$$ 
is irreducible in $\C[\alpha_{1},\alpha_{j+1},\alpha_{j+2},\cdot \cdot \cdot,\alpha_{p-2}, \beta]$.

Write $h^{j}_{k,1,p}$ as a polynomial in $(\beta - \alpha_{1})$ with coefficients from $\Q[\alpha_{1}, \alpha_{j+1}, \ldots, \alpha_{p-2}, \beta]$. Now, $h_{k,1,p}$ being $p$-Eisenstein with respect to $(\beta - \alpha_{1})$, all non-leading coefficients of the monic polynomial $h^{j}_{k,1,p}$ as a polynomial in $(\beta - \alpha_{1})$ is divisible by $p$. From Lemma~\ref{Eisen2}, the lowest degree part of $h^{j}_{k,1,p}$ is 
\[
p \left( \beta - \alpha_{1} \right)^{j-1} \left( \prod_{i=j+1}^{p-2} (\beta-\alpha_{i}) \right) (\beta +j \alpha_{1}+\alpha_{j+1}+\alpha_{j+2}+\cdot \cdot \cdot+\alpha_{p-2}).
\]
 From Corollary \ref{cor:h_k,1,p}, $$\frac{h^{j}_{k,1,p}}{(\beta - \alpha_{1})^{j-1}}$$ is a polynomial. From Theorem \ref{m}, 
$$\frac{h^{j}_{k,1,p}}{(\beta - \alpha_{1})^{j-1}}$$  
is a $p-$Eisenstein polynomial with respect to $(\beta - \alpha_{1})$. 

Replacing $h^{j}_{k,1,p}$ with $h^{j}_{k,1,p}/(\beta - \alpha_{1})^{j-1}$, we see that $h_{k,1,p}^{j}$ is irreducible over $\Q$. As $j<p-1$, Lemma \ref{Eisen2} shows that the lowest degree homogeneous part of $h_{k,1,p}^{j}$ has at least one simple linear factor defined over $\Q$, for example: 
$$l:= (\beta +j \alpha_{1}+\alpha_{j+1}+\alpha_{j+2}+\cdot \cdot \cdot+\alpha_{p-2}).$$ 
We argue similarly as in the proof of Theorem \ref{irrc}. By irreducibility of $h_{k,1,p}^{j}$ over $\Q$, the lowest degree homogeneous part of any factor of $h_{k,1,p}^{j}$ over $\C$ must have $l$ as one of its factor. As $l$ is a simple factor of the lowest degree homogeneous part of $h_{k,1,p}^{j}$, the polynomial $h_{k,1,p}^{j}$ is irreducible over $\C$. 
\end{proof}

Theorem \ref{inbetween} is equivalent to Theorem \ref{theorem3}, mentioned in the introduction. Fix $k\geq 0, p>3, p$ prime. Theorem \ref{inbetween} provides us with a chain of quasi-affine subvarities of $\mathcal{M}_{p}$ of strictly decreasing dimensions.
\begin{equation}\label{chain}
    \mathcal{M}_{p} \supset \Sigma_{k,1,p} \supset \Sigma_{2} \supset \Sigma_{3} \supset \cdot \cdot \cdot \supset \Sigma_{p-2} \supset \text{ \{unicritical points\} }.
\end{equation}
As $\mathcal{M}_{p}$ is a $(p-1)$-dimensional affine variety, dimension decreases by exactly $1$ in each stage of the above chain. Let $2 \leq j \leq p-2$. Similar to the separably critical case, as $\Sigma_{j}$ is irreducible, the set of all points of $\Sigma_{k,1,p}$ for which ramification index is exactly $j+1$ is open and dense in $\Sigma_{j}$. This shows that,
\begin{corollary}\label{mixedcoro}
    Let $k\geq 0, p$ be an odd prime and $2\leq j \leq p-1$. The set of all points of $\mathcal{M}_{p}$ for which the marked critical point is strictly $(k,1)$-preperiodic and has ramification index $j$, is an irreducible quasi-affine variety of dimension $p-j$.
\end{corollary}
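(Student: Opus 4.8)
The plan is to realise the locus in question as a dense open subvariety of one of the irreducible varieties produced by Theorem~\ref{inbetween}, and to read off its dimension from the chain \eqref{chain}. For uniformity, write $\Sigma_1 := \Sigma_{k,1,p}$ (irreducible by Theorem~\ref{irrc}) and $\Sigma_{p-1} := \{\text{unicritical points of }\Sigma_{k,1,p}\}$ (the last term of \eqref{chain}), and recall that $\Sigma_j$ is irreducible for $2 \le j \le p-2$ by Theorem~\ref{inbetween}. The substantive step is to identify, for each $2 \le j \le p-1$, the locus of points of $\Sigma_{k,1,p}$ with ramification index $\ge j$ with $\Sigma_{j-1}$: since $\alpha_1$ is always a root of $f(z)-f(\alpha_1)$, the ramification index of a point is $1$ plus the multiplicity of $\alpha_1$ as a root of $f' = p\prod_{i=1}^{p-1}(z-\alpha_i)$, so ramification index $\ge j$ says that at least $j-1$ of $\alpha_1,\dots,\alpha_{p-1}$ equal $\alpha_1$; after applying the permutation factor $P$ in the description \eqref{finalmap} of $\mathcal{M}_p$, this is precisely the image of $V_{j-1}$ (with $V_1$ read as the whole parameter space), i.e. membership in $\Sigma_{j-1}$. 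Consequently the locus $W_j$ of Corollary~\ref{mixedcoro}, of points with ramification index exactly $j$, equals $\Sigma_{j-1}\setminus\Sigma_j$.

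It then remains to check openness, nonemptiness, and the dimension of $W_j$. Openness is immediate: $\Sigma_j$ arises from $\Sigma_{j-1}$ by imposing one further linear identification among the critical coordinates (explicitly $\alpha_1 = \alpha_j$ when $j \le p-2$, and $\alpha_1 = 0$ when $j = p-1$), so $\Sigma_j$ is closed in $\Sigma_{j-1}$ and $W_j = \Sigma_{j-1}\setminus\Sigma_j$ is open. Nonemptiness follows from the chain \eqref{chain} being strictly decreasing with the dimension dropping by exactly $1$ at each stage, so $\Sigma_j \subsetneq \Sigma_{j-1}$ and $W_j \ne \emptyset$. A nonempty open subset of an irreducible space is dense and irreducible, hence $W_j$ is an irreducible quasi-affine variety with $\dim W_j = \dim\Sigma_{j-1}$; and since $\dim\mathcal{M}_p = p-1$ and each step of \eqref{chain} lowers dimension by $1$, one gets $\dim\Sigma_{j-1} = (p-1)-(j-1) = p-j$, which is the asserted value.

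The one step that genuinely needs care is the identification in the first paragraph --- that ``ramification index $\ge j$'' is a closed condition on $\mathcal{M}_p$ and corresponds, in the coordinates $(\alpha_1,\dots,\alpha_{p-2},\beta)$ taken modulo $\langle P,\mu_{p-1}\rangle$, to the image of $V_{j-1}$ --- the delicate part being the bookkeeping with the permutation group $P$. This is, however, precisely the argument already used implicitly for the separably critical case ($j=2$) and the unicritical case ($j=p$) earlier in this section, so it introduces no new difficulty; the rest of the proof is formal.
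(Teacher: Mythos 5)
Your proposal is correct and follows essentially the same route as the paper: the locus of ramification index exactly $j$ is identified as the nonempty open complement of $\Sigma_{j}$ inside the irreducible variety $\Sigma_{j-1}$ from the chain \eqref{chain}, hence dense, irreducible, and of dimension $\dim\Sigma_{j-1}=p-j$. Your explicit justification that ``ramification index $\ge j$'' corresponds to the image of $V_{j-1}$ (via the multiplicity of $\alpha_1$ in $f'$ and the permutation quotient) is only left implicit in the paper, so the write-up is if anything slightly more complete.
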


Theorem \ref{inbetween} provides evidence for extending Milnor's conjecture to the mixed critical case. More precisely,
\begin{conjecture}\label{mixedconj1}
    Fix $k\geq 0,\;n>0,\; d>2,\; 2 < j < d$. The family of conjugacy classes of degree $d$ polynomials for which the marked critical point is strictly $(k,n)$-preperiodic, and has ramification index $\geq j$, is an irreducible quasi-affine subvariety of $\mathcal{M}_{d}$.
\end{conjecture}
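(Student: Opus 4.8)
The plan is to imitate, in full generality, the three-step argument used to prove the prime-degree mixed critical case (Theorem~\ref{inbetween}). \emph{Step one (algebraic model).} Using the normal form of Proposition~\ref{propnormal}, realize the locus in $\mathcal{M}_d$ where the marked critical point is strictly $(k,n)$-preperiodic with ramification index $\geq j$ as the image of the coordinate subspace $\{\alpha_1=\cdots=\alpha_{j-1}\}$ (identifying $j-1$ critical points forces ramification index $\geq j$), and let $h^{(j)}_{k,n,d}$ be the specialization of $h_{k,n,d}$ along this identification. By the same reasoning as in Lemma~\ref{h} and Corollary~\ref{coro1}, the set of conjugacy classes with ramification index exactly $j$ is nonempty and Zariski open in the zero set of $h^{(j)}_{k,n,d}$, so the conjecture reduces to the irreducibility of $h^{(j)}_{k,n,d}$ over $\C$.

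\emph{Step two (irreducibility over $\Q$).} When $d=p^e$ is a prime power, I would attempt the generalized Eisenstein criterion (Theorem~\ref{m}) with respect to $p$ and the linear form $\beta-\alpha_1$. The resultant/transversality condition should come from Thurston rigidity — the statement of Theorem~\ref{thurs} already allows arbitrary periods — while for composite $d$ one would need the full Thurston rigidity theorem, or an extension of the elementary argument in Subsection~\ref{lemmasthurs}. The real difficulty is the congruence $h^{(j)}_{k,n,d}\equiv(\beta-\alpha_1)^{N}\bmod p$: reducing $f$ mod $p$ gives $f_{k,n,p^e}\equiv\sum_{l=k}^{k+n-1}(\beta-\alpha_1)^{p^{le}}\bmod p$, which for $n>1$ is \emph{not} a power of $\beta-\alpha_1$; after dividing out the lower-order preperiodic loci one is left, modulo $p$, with a linearized ($\F_p$-additive) polynomial in $\beta-\alpha_1$, which need not be a prime power. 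This calls for a finer analysis of which factors of $\hat f_{k,n,d}$ are removed in forming $h_{k,n,d}$, or for replacing Eisenstein by a Newton-polygon argument tailored to such linearized polynomials. For composite $d$ there is no prime at which a degree-$d$ polynomial is Eisenstein, so a new arithmetic input is needed altogether: one route is to fix a prime $p\mid d$ and extract a weaker $p$-adic congruence; another is to specialize some $\alpha_i$ or $\beta$ so as to degenerate to the prime-power or unicritical cases, where the arithmetic is controlled.

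\emph{Step three (from $\Q$ to $\C$).} As in the proofs of Theorems~\ref{irrc} and~\ref{inbetween}, compute the lowest-degree homogeneous part of $h^{(j)}_{k,n,d}$. Provided $j<d$ (not all critical points are identified), one expects it to contain a linear factor that is both simple and defined over $\Q$ — morally the surviving analogue of a factor of $f'(\beta)$, such as $\beta+(j-1)\alpha_1+\alpha_j+\cdots+\alpha_{d-2}$. Any $\C$-irreducible factor of $h^{(j)}_{k,n,d}$ then has lowest-degree part divisible by this linear form, and since the form is simple a Galois-descent argument (exactly as in the proof of Theorem~\ref{irrc}) forces every such factor to be defined over $\Q$; combined with Step two this gives irreducibility over $\C$. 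Already identifying the lowest-degree homogeneous part of $h_{k,n,d}$ for $n>1$ takes some care, since the top terms of $f^{k+n}(\alpha_1)$ and $f^{k}(\alpha_1)$ cancel and one must go to higher order.

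\emph{Main obstacle.} The crux is Step two for composite degree and for period $n>1$: the paper's engine is Eisenstein/$p$-adic, with no analogue at a composite degree, and for $n>1$ the reduction mod $p$ yields linearized polynomials rather than prime powers. These are precisely the phenomena examined in Sections~\ref{obs1} and~\ref{obs2}; overcoming them — not the essentially formal Steps one and three — is where genuinely new ideas are required.
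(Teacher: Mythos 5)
This statement is stated in the paper as a \emph{conjecture} (Conjecture~\ref{mixedconj1}); the paper offers no proof of it, only the special case $n=1$, $d=p$ an odd prime (Theorem~\ref{inbetween}) together with a discussion of obstructions in Sections~\ref{obs1} and~\ref{obs2}. Your proposal correctly recognizes this: it is not a proof and does not claim to be one. The parts you do carry out — the reduction to irreducibility of the specialized polynomial $h^{(j)}_{k,n,d}$, the Eisenstein argument over $\Q$ for the prime-degree period-one case, and the descent from $\Q$ to $\C$ via a simple rational linear factor of the lowest-degree homogeneous part — reproduce the paper's proof of Theorem~\ref{inbetween} faithfully, and the obstacles you isolate (the mod-$p$ reduction of $f_{k,n,d}$ being a linearized polynomial rather than a power of $\beta-\alpha_1$ when $n>1$, and the failure of the resultant condition or of Eisenstein altogether for composite or prime-power degree) are exactly the ones the paper records in Lemma~\ref{h02p} and the remark following Lemma~\ref{respe}. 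So there is no gap to flag beyond the one you yourself flag: the conjecture remains open, and your assessment of where new ideas are needed agrees with the paper's.
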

\begin{remark}\label{ext}
    Observe that, the method above can be generalised for any $\mathcal{M}_{d}$, $d\in \N, d>2$. Let $k\in \N \cup \{0\}, n \in \N$. Let us assume that $h_{k,n,d}$ is irreducible over $\C$. Then, one can prove irreducibility in the mixed critical cases, by showing the irreducibility of $h_{k,n,d}(\alpha_{1},\cdot \cdot \cdot,\alpha_{1},\alpha_{j+1},\cdot \cdot \cdot,\alpha_{d-2},\beta)$ over $\C$, for $1<j<d-1$.  
\end{remark}

\section{The unicritical case}\label{unicritical} Putting $\alpha_{1}=\alpha_{2}=\cdot \cdot \cdot=\alpha_{d-1}=0$ in Equation \eqref{b}, we obtain a normal form for degree $d \geq 2$ unicritical polynomials,
\begin{equation}\label{unipoly}
    f(z)= z^{d} + \beta.
\end{equation}
Let $k, n\in \Z, k\geq 0, n>0$. Let $R_{k,n,d}$ be the polynomial in $\Q[\beta]$, whose roots are exactly the values of $\beta$ for which $0$ is strictly $(k,n)$-preperiodic under $f(z)= z^{d}+\beta$. \emph{Milnor's conjecture} in the unicritical case (Conjecture \ref{uniconj}), is equivalent to $R_{k,n,d}$ polynomials being either constant or irreducible over $\Q$, for every $k\geq 0, n>0.$  

Putting $\alpha_{1}=\alpha_{2}=\cdot \cdot \cdot=\alpha_{d-1}=0$ in Equation \eqref{g'}, we get that $R_{k,n,d}$ divides $h_{k,n,d}(0,0,\cdot \cdot \cdot,0,\beta)$ in $\Q[\beta]$. In this section, we will prove the irreducibility of $R_{k,1,p}$ over $\Q$, where $k\geq 0, p$ an odd prime.

\begin{theorem}\label{uni}
    For any $k,p\in \Z, k\geq 0, p$ odd prime, The polynomial $R_{k,1,p}$ is either constant or irreducible over $\Q$. 
\end{theorem}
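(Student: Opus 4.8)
The plan is to reduce the statement to a property of the one-variable polynomial $H_{k}(\beta):=h_{k,1,p}(0,0,\dots,0,\beta)\in\Z_{(p)}[\beta]$: from the paragraph preceding the theorem, $R_{k,1,p}$ divides $H_{k}$ in $\Q[\beta]$, so it suffices to understand $H_{k}$. I would first dispose of the degenerate cases. For $k=0$, Lemma \ref{h01p} gives $H_{0}=\beta$, so $R_{0,1,p}$, being a divisor of $\beta$, is constant or a nonzero scalar multiple of $\beta$, hence constant or (being linear) irreducible. For $k=1$, if $0$ is strictly $(1,1)$-preperiodic under $g(z)=z^{p}+\beta$ then $g^{2}(0)=g(0)$, i.e. $\beta^{p}+\beta=\beta$, forcing $\beta=0$; but then $g(0)=0$, so $0$ is strictly $(0,1)$-preperiodic, not strictly $(1,1)$-preperiodic. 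Hence there is no admissible parameter and $R_{1,1,p}$ is constant.

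The essential case is $k\geq 2$, where I would read the leading and trailing coefficients of $H_{k}$ off the two structural facts about $h_{k,1,p}$ already proved in Section \ref{t'''}. Since $h_{k,1,p}$ is monic in $\beta$ over $\Z_{(p)}[\alpha_{1},\dots,\alpha_{p-2}]$, the polynomial $H_{k}$ is monic. By Lemma \ref{Eisen1}, $h_{k,1,p}\equiv(\beta-\alpha_{1})^{N_{k,p}}\pmod p$, hence $H_{k}\equiv\beta^{N_{k,p}}\pmod p$; comparing this with the monicity of $H_{k}$ forces $\deg H_{k}=N_{k,p}$ and shows that every coefficient of $H_{k}$ below the leading one is divisible by $p$. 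By Lemma \ref{Eisen2}, the homogeneous part of $h_{k,1,p}$ of least total degree is $p(\beta+\sum_{i=1}^{p-2}\alpha_{i})\prod_{i=2}^{p-2}(\beta-\alpha_{i})$, of total degree $p-2$; specialising all $\alpha_{i}$ to $0$ contributes exactly $p\,\beta^{p-2}$ to $H_{k}$, and since $h_{k,1,p}$ has no monomial of total degree less than $p-2$, the smallest power of $\beta$ occurring in $H_{k}$ is $\beta^{p-2}$, with coefficient exactly $p$. As $p\neq 1$, comparing with the leading coefficient $1$ of $H_{k}$ shows $\deg H_{k}>p-2$.

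Writing $H_{k}(\beta)=\beta^{p-2}\,\widetilde{H}_{k}(\beta)$, the polynomial $\widetilde{H}_{k}\in\Z_{(p)}[\beta]$ is monic of positive degree, all of its lower coefficients are divisible by $p$, and its constant term is $p$, of $p$-adic valuation exactly $1$; hence $\widetilde{H}_{k}$ satisfies Eisenstein's criterion at $p$ and is irreducible over $\Q$. Finally, for the parameter $\beta=0$ the map $z\mapsto z^{p}$ fixes $0$, so $0$ has preperiod $0$ and is not strictly $(k,1)$-preperiodic for any $k\geq 1$; thus $\beta\nmid R_{k,1,p}$. Combining $R_{k,1,p}\mid H_{k}=\beta^{p-2}\widetilde{H}_{k}$ with $\beta\nmid R_{k,1,p}$ and unique factorisation in $\Q[\beta]$ gives $R_{k,1,p}\mid\widetilde{H}_{k}$; since $\widetilde{H}_{k}$ is irreducible over $\Q$, the polynomial $R_{k,1,p}$ is constant or a scalar multiple of $\widetilde{H}_{k}$, which is the assertion.

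The substance is entirely in identifying the leading and trailing coefficients of $H_{k}$, and both come directly from Lemmas \ref{Eisen1} and \ref{Eisen2}, so no new computation beyond Section \ref{t'''} is needed. I expect the only mild obstacle to be the bookkeeping around the degenerate cases $k=0,1$ together with the observation that $\beta=0$ is inadmissible for $k\geq 1$ — it is precisely this observation that forces $R_{k,1,p}$ to divide the Eisenstein factor $\widetilde{H}_{k}$ rather than the spurious factor $\beta^{p-2}$.
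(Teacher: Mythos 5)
Your proposal is correct and follows essentially the same route as the paper: dispose of $k=0,1$ directly, then use Lemma \ref{Eisen1} for the mod-$p$ shape of $h_{k,1,p}(0,\dots,0,\beta)$ and Lemma \ref{Eisen2} for the trailing term $p\beta^{p-2}$, strip off $\beta^{p-2}$ (inadmissible since $\beta=0$ gives preperiod $0$), and conclude by Eisenstein at $p$. The only difference is presentational: you phrase the conclusion as $R_{k,1,p}$ dividing the irreducible Eisenstein factor $\widetilde H_k$, while the paper asserts directly that a non-constant $R_{k,1,p}$ is itself $p$-Eisenstein — your bookkeeping of the coefficient of $\beta^{p-2}$ being exactly $p$ is, if anything, slightly more careful.
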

\begin{proof}
The polynomial $R_{0,1,p}$ is $\beta$, hence irreducible. For any $n\in \N$, if $0$ is $(1,n)$-preperiodic under $f$ as in Equation \eqref{unipoly}, then it is also $n$-periodic. Hence, the polynomial $R_{1,1,p}$ is constant. 

Let $k\geq 2$. From section \ref{t'''}, the polynomial $h_{k,1,p}$ is $p$-Eisenstein with respect to the polynomial $\beta-\alpha_{1}$, and its lowest degree homogeneous part is 
$$p(\beta + \sum_{i=1}^{p-2} \alpha_{i})\prod_{i=2}^{p-2} (\beta - \alpha_{i}).$$ 
Hence, the lowest degree term of $h_{k,1,p}(0,0,\cdot \cdot \cdot,0,\beta)$  is $p\beta^{p-2}$, and the polynomial 
$$\dfrac{h_{k,1,p}(0,0,\cdot \cdot \cdot,0,\beta)}{\beta^{p-2}},$$
is $p$-Eisenstein with respect to $\beta$. By definition of $R_{k,1,p}$, $\;\beta$ divides $R_{k,1,p}$ iff $k=0$. Hence, for any $k\geq 2$, if $R_{k,1,p}$ is non-constant polynomial, then it is $p$-Eisenstein with respect to $\beta$, for every $k\in \N$. Therefore $R_{k,1,p}$ is either constant or irreducible over $\Q$, for any $k\in \Z_{\geq 0}$. 
\end{proof}
\begin{remark}
    Theorem \ref{uni} is equivalent to Theorem \ref{theorem2} stated in the introduction. A different proof of this theorem can be found in  \cite[Corollary~1.1.i]{10.7169/facm/1799}. A parallel but weaker version of Theorem \ref{uni}, obtained by choosing the normal form of degree $d$ unicritical polynomial to be $g(z)=az^{d} +1$, is proved by Buff, Epstein and Koch in \cite[Theorem~22]{article1}.
\end{remark}

\section{A study of $\Sigma_{k,2,p}$}\label{obs1}
For this section, $f$ is as defined in Equation \eqref{fdegp}.
$$f(z)=z^{p} + \sum_{i=2}^{p-1}(-1)^{i}\frac{p\cdot s_{i}(\bar{\alpha})}{p-i} z^{p-i} - \alpha_{1}^{p} - \sum_{i=2}^{p-1}(-1)^{i}\frac{p\cdot s_{i}(\bar{\alpha})}{p-i} \alpha_{1}^{p-i} + \beta.$$

\begin{lemma}\label{h02p}
The polynomial $h_{0,2,p}(\text{mod }p)$ is reducible in $\F_{p}[\alpha_{1},\alpha_{2},\cdot \cdot \cdot,\alpha_{p-2},\beta]$, for all prime $p>3$.
\end{lemma}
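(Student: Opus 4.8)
The plan is to compute $h_{0,2,p}$ modulo $p$ explicitly and then exhibit a nontrivial factorization of the resulting polynomial over $\F_{p}$. First I would pin down which factors are stripped off in passing from $\hat f_{0,2,p}$ to $h_{0,2,p}$: the only pair $(l,m)$ with $l\le 0$, $m\mid 2$ and $(l,m)\ne(0,2)$ is $(0,1)$, and $\hat f_{0,1,p}=\beta-\alpha_{1}$ is irreducible (Lemma~\ref{h01p}), so $h_{0,2,p}=\hat f_{0,2,p}/(\beta-\alpha_{1})^{a_{1}}$, where $a_{1}\ge 1$ is the exact power of $\beta-\alpha_{1}$ dividing $\hat f_{0,2,p}$ over $\Q$ (the lower bound being Lemma~\ref{l}).

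Next I would reduce modulo $p$. Since $f(z)\equiv z^{p}-\alpha_{1}^{p}+\beta\pmod p$, we get $f(\alpha_{1})\equiv\beta$ and $f^{2}(\alpha_{1})=f(f(\alpha_{1}))\equiv\beta^{p}-\alpha_{1}^{p}+\beta\pmod p$, whence
\[
f_{0,2,p}=f^{2}(\alpha_{1})-\alpha_{1}\equiv\beta^{p}-\alpha_{1}^{p}+\beta-\alpha_{1}\equiv(\beta-\alpha_{1})^{p}+(\beta-\alpha_{1})=(\beta-\alpha_{1})\bigl((\beta-\alpha_{1})^{p-1}+1\bigr)\pmod p ;
\]
since this expression does not involve $\alpha_{p-1}$, the same congruence holds for $\hat f_{0,2,p}=\widehat{f_{0,2,p}}$. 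Reducing the identity $\hat f_{0,2,p}=h_{0,2,p}\cdot(\beta-\alpha_{1})^{a_{1}}$ modulo $p$, comparing inside the UFD $\F_{p}[p-2]$, and noting that $(\beta-\alpha_{1})^{p-1}+1$ is coprime to $\beta-\alpha_{1}$ (it takes the value $1$ at $\beta=\alpha_{1}$), forces $a_{1}=1$ and
\[
h_{0,2,p}\equiv(\beta-\alpha_{1})^{p-1}+1\pmod p .
\]

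It then suffices to show that $u^{p-1}+1$ is reducible in $\F_{p}[u]$ for every prime $p>3$, where $u:=\beta-\alpha_{1}$; since $u$ is part of a polynomial coordinate system on $\F_{p}[p-2]$, this is equivalent to reducibility of $h_{0,2,p}\bmod p$ in $\F_{p}[p-2]$. I would split into two cases. If $p\equiv 1\pmod 4$, then $-1$ is a square in $\F_{p}$, say $a^{2}\equiv-1$, and
\[
u^{p-1}+1=u^{p-1}-a^{2}=\bigl(u^{(p-1)/2}-a\bigr)\bigl(u^{(p-1)/2}+a\bigr)
\]
is a product of two nonconstant polynomials, since $(p-1)/2\ge 2$. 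If $p\equiv 3\pmod 4$, then $(p-1)/2$ is odd, so $u^{2}+1$ divides $(u^{2})^{(p-1)/2}+1=u^{p-1}+1$, with cofactor of degree $p-3\ge 2$; again a nontrivial factorization.

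No single step here is hard. The points that need care are: confirming $a_{1}=1$, i.e.\ that $\beta-\alpha_{1}$ divides $\hat f_{0,2,p}$ only once --- which here falls out of the mod-$p$ computation itself, so that $h_{0,2,p}\bmod p$ is exactly $(\beta-\alpha_{1})^{p-1}+1$; and observing that the hypothesis $p>3$ is precisely what the argument needs, since for $p=3$ one has $h_{0,2,3}\equiv u^{2}+1$, the irreducible quadratic over $\F_{3}$, so the statement genuinely fails there.
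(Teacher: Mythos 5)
Your proof is correct and follows essentially the same route as the paper: reduce mod $p$ to get $f_{0,2,p}\equiv(\beta-\alpha_{1})^{p}+(\beta-\alpha_{1})$, strip the single factor $\beta-\alpha_{1}$ to identify $h_{0,2,p}\equiv(\beta-\alpha_{1})^{p-1}+1\pmod p$, and then factor $u^{p-1}+1$ according to the residue of $p$ mod $4$. Your justification that exactly one factor of $\beta-\alpha_{1}$ is removed (via coprimality of $(\beta-\alpha_{1})^{p-1}+1$ with $\beta-\alpha_{1}$ mod $p$) is a point the paper passes over silently, and is welcome. One substantive remark: in the case $p\equiv 3\pmod 4$ the paper takes an odd prime $q\mid p-1$ and asserts that $u^{q}+1$ divides $u^{p-1}+1$; this is false as stated, since $u^{d}+1$ divides $u^{n}+1$ only when $n/d$ is odd, and $(p-1)/q$ is even for odd $q$ (e.g.\ $u^{3}+1\nmid u^{6}+1$ for $p=7$). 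Your version --- using $d=2$, so that $(p-1)/2$ is odd and $u^{2}+1$ divides $u^{p-1}+1$ with nonconstant cofactor --- is the correct repair, so on this point your argument improves on the paper's.
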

\begin{proof}
We have, 
\begin{align*}
    f_{0,2,p}& =f^{2}(\alpha_{1})-\alpha_{1}\\
    &= (\beta^{p}-\alpha_{1}^{p})+ \sum_{i=2}^{p-1}(-1)^{i}\frac{p\cdot s_{i}(\bar{\alpha})}{p-i}(\beta^{p-i}-\alpha_{1}^{p-i})+(\beta-\alpha_{1})\\
&\equiv (\beta-\alpha_{1})^{p}+(\beta-\alpha_{1})\;\modp.
\end{align*}
As $f_{0,1,p}= \beta - \alpha_{1}$, we have
\begin{equation*}
    h_{0,2,p}\equiv (\beta-\alpha_{1})^{p-1}+1\;\modp.
\end{equation*}
For $p\equiv 1(\text{mod } 4)$, $-1$ being a square modulo $p$, $h_{0,2,p}$ is reducible over $\F_{p}$. For $p\equiv 3(\text{mod } 4), p\neq 3$, the integer $p-1$ has at least one odd prime factor, say $q$. Then $(\beta-\alpha_{1})^{q}+1$ divides $h_{0,2,p}$ modulo $p$.
So, $h_{0,2,p}\;(\text{mod }p)$ is reducible for any prime $p>3$.
\end{proof}
\begin{remark}
 Lemma \ref{h02p} shows that modulo $p$, the polynomial $h_{0,2,p}$ is reducible, for any prime $p>3$. So, one can not extend the proof for $(k,1,p)$ case directly to this case.
\end{remark}
\section{A study of $\Sigma_{k,1,p^{e}}$}\label{obs2}
Let $p,e\in \N, p$ odd prime. Putting $d=p^{e}$ in Equation \eqref{b}, the expression of $f$ becomes,
\begin{equation*}
    f(z)= z^{p^{e}} + \sum_{i=2}^{p^{e}-1}(-1)^{i}\frac{p^{e}}{p^{e}-i}\cdot s_{i}(\bar{\alpha})\cdot z^{p^{e}-i} - \alpha_{1}^{p^{e}} - \sum_{i=2}^{p^{e}-1}(-1)^{i}\frac{p^{e}}{p^{e}-i}\cdot s_{i}(\bar{\alpha})\cdot \alpha_{1}^{p^{e}-i} + \beta.
\end{equation*}
Here, $f\in \Z_{(p)}[z,\alpha_{1},\alpha_{2},\cdot \cdot \cdot,\alpha_{p^{e}-1},\beta], \hat{f}_{k,1,p^{e}},h_{k,1,p^{e}}\in \Z_{(p)}[\alpha_{1},\alpha_{2},\cdot \cdot \cdot,\alpha_{p^{e}-2},\beta]$. One can prove the following lemmas similarly as they were proved in the $(k,1,p)$ case.
\begin{lemma}
The polynomial $h_{0,1,p^{e}}$ is $\beta-\alpha_{1}$.
\end{lemma}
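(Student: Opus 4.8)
The plan is to run the proof of Lemma~\ref{h01p} again essentially verbatim, observing that nothing in that argument used primality of the degree as opposed to its being a prime power. First I would unwind the definition of $f_{0,1,p^{e}}$: by definition $f_{0,1,p^{e}} = f^{0+1}(\alpha_{1}) - f^{0}(\alpha_{1}) = f(\alpha_{1}) - \alpha_{1}$, and by the construction of the normal form (Equation~\eqref{b} with $d = p^{e}$, where the constant term of $f$ was chosen precisely so that $f(\alpha_{1}) = \beta$) this equals $\beta - \alpha_{1}$, an element of $\Z_{(p)}[p^{e}-1]$.

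Next I would apply the $\;\hat{}\;$ map. Since $\beta - \alpha_{1}$ already lies in $\Z_{(p)}[\alpha_{1},\alpha_{2},\cdots,\alpha_{p^{e}-2},\beta]$, on which $\Psi_{p^{e},\Z_{(p)}}$ is the identity, we obtain $\hat{f}_{0,1,p^{e}} = \widehat{f_{0,1,p^{e}}} = \beta - \alpha_{1}$.

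Finally, passing from $\hat{f}_{0,1,p^{e}}$ to $h_{0,1,p^{e}}$ amounts to dividing out the irreducible factors of the polynomials $\hat{f}_{l,m,p^{e}}$ as $(l,m)$ ranges over all pairs of non-negative integers with $l \leq 0$, $m \mid 1$ and $(l,m) \neq (0,1)$. The only candidate is $(0,1)$ itself, which is excluded, so this index set is empty, the product $\prod_{i} g_{i}^{a_{i}}$ is the empty product, and hence $h_{0,1,p^{e}} = \hat{f}_{0,1,p^{e}} = \beta - \alpha_{1}$. There is essentially no obstacle here: the statement is a direct unwinding of definitions, and the one point worth double-checking is exactly that the index set for the $g_{i}$'s is genuinely empty in the $(0,1)$ case, which it is.
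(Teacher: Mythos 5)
Your proposal is correct and matches the paper's argument: the paper simply states that these lemmas are proved exactly as in the degree-$p$ case, where the proof of Lemma~\ref{h01p} is the same one-line definitional unwinding $h_{0,1,p}=\hat{f}_{0,1,p}=\hat{f}(\alpha_{1})-\alpha_{1}=\beta-\alpha_{1}$. Your additional observation that the index set for the $g_{i}$'s is empty when $(k,n)=(0,1)$ is a correct and worthwhile clarification of why no factors get divided out.
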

\begin{lemma}\label{powermod}
For any $k\in \N$, we have $h_{k,1,p^{e}}\equiv h_{0,1,p^{e}}^{N_{k,p^{e}}}(\text{mod }p)$, for some $N_{k,p^{e}}\in \N$.
\end{lemma}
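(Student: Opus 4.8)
The plan is to transcribe, essentially line for line, the proof of Lemma \ref{Eisen1} from Section \ref{t'''}, replacing the prime $p$ by the prime power $p^{e}$. The one ingredient special to the prime-power setting that makes this go through is the congruence $f(z) \equiv z^{p^{e}} - \alpha_{1}^{p^{e}} + \beta \pmod{p}$, which is already recorded inside the proof of Theorem \ref{thurs}; it rests on the observation that for every $i$ with $1 \leq i \leq p^{e}-1$ one has $v_{p}\!\left(\tfrac{p^{e}}{p^{e}-i}\right) = e - v_{p}(i) \geq 1$, so that every intermediate coefficient of $f$ in \eqref{b} (with $d=p^{e}$) lies in $p\Z_{(p)}$.

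First I would iterate this congruence: reducing modulo $p$ and using that the $p^{e}$-th power map is additive in characteristic $p$, an easy induction on $k$ gives
\begin{equation*}
    f^{k}(\beta) \equiv \beta + \sum_{j=1}^{k}\left(\beta^{p^{je}} - \alpha_{1}^{p^{je}}\right) \pmod{p}.
\end{equation*}
Since $f(\alpha_{1}) = \beta$ in the normal form \eqref{b}, we have $f_{k,1,p^{e}} = f^{k+1}(\alpha_{1}) - f^{k}(\alpha_{1}) = f^{k}(\beta) - f^{k-1}(\beta)$, so all the lower-order terms telescope away and
\begin{equation*}
    f_{k,1,p^{e}} \equiv \beta^{p^{ke}} - \alpha_{1}^{p^{ke}} \equiv (\beta - \alpha_{1})^{p^{ke}} \pmod{p},
\end{equation*}
the last step again by the Frobenius. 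Applying the reduction homomorphism $\Psi_{p^{e},\C}$ of \eqref{f'}, which commutes with reduction modulo $p$ and fixes $\alpha_{1}$ and $\beta$, yields $\hat{f}_{k,1,p^{e}} = \widehat{f_{k,1,p^{e}}} \equiv (\beta - \alpha_{1})^{p^{ke}} \pmod{p}$. Finally, $h_{k,1,p^{e}}$ divides $\hat{f}_{k,1,p^{e}}$ by its very definition, the polynomial $h_{0,1,p^{e}} = \beta - \alpha_{1}$ (by the preceding lemma) is irreducible over $\F_{p}$, and $\F_{p}[p^{e}-2]$ is a unique factorization domain; since $h_{k,1,p^{e}}$ is monic and nonconstant as a polynomial in $\beta$, these facts force $h_{k,1,p^{e}} \equiv (\beta - \alpha_{1})^{N_{k,p^{e}}} = h_{0,1,p^{e}}^{N_{k,p^{e}}} \pmod{p}$ for some $N_{k,p^{e}} \in \N$, which is the assertion.

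I do not expect a genuine obstacle here: the statement is the direct analogue of Lemma \ref{Eisen1} and every step carries over unchanged, the only place the exponent $e$ enters being the valuation bound $v_{p}\!\left(\tfrac{p^{e}}{p^{e}-i}\right) \geq 1$ that annihilates the middle coefficients of $f$ modulo $p$ --- and that bound is already in hand. The genuinely delicate features of the prime-power case --- an analogue of Lemma \ref{Eisen2} controlling the lowest-degree homogeneous part of $h_{k,1,p^{e}}$, together with the resultant condition (condition $3$ of Theorem \ref{m}) --- are \emph{not} needed for this lemma; they are precisely the points at which the method of Section \ref{t'''} fails to generalize, and where the discussion of Section \ref{obs2} halts.
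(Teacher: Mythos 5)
Your proof is correct and is exactly what the paper intends: Section \ref{obs2} omits the argument, stating only that the lemma is proved ``similarly as in the $(k,1,p)$ case,'' and your write-up is precisely that transcription of Lemma \ref{Eisen1}, with the one genuinely new ingredient — the valuation bound $v_{p}(p^{e}/(p^{e}-i))\geq 1$ giving $f(z)\equiv z^{p^{e}}-\alpha_{1}^{p^{e}}+\beta \pmod p$ — correctly identified and already present in the proof of Theorem \ref{thurs}.
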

\begin{lemma}\label{respe}
For any $k\in \N, k\geq 2,$ the following equality holds, 
$$h_{k,1,p^{e}}=\frac{\hat{f}_{k,1,p^{e}}}{\hat{f}_{k-1,1,p^{e}}(\beta-\alpha_{1})}$$ 
and the lowest degree homogeneous part of $h_{k,1,p^{e}}$ is 
$$p^{e}(\beta + \sum_{i=1}^{p^{e}-2} \alpha_{i})\prod_{i=2}^{p^{e}-2}(\beta-\alpha_{i})$$.
\end{lemma}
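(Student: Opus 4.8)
The plan is to transcribe the proof of Lemma~\ref{Eisen2} with $p$ replaced by $p^{e}$; every tool used there is available at this level of generality. Equation~\eqref{j} already provides, for $d=p^{e}$,
\[
\frac{f(z)-f(w)}{z-w}=\Phi_{p^{e}}(z,w)+\sum_{i=2}^{p^{e}-1}(-1)^{i}\frac{p^{e}}{p^{e}-i}\,s_{i}(\bar{\alpha})\,\Phi_{p^{e}-i}(z,w),
\]
Thurston's rigidity (Theorem~\ref{thurs}) and the radical argument of Lemma~\ref{k} are stated exactly for $\C[p^{e}-1]$ and $\C[p^{e}-2]$, and $f$ (hence $f_{k,1,p^{e}}$ and $\hat f_{k,1,p^{e}}$) lies in the relevant $\Z_{(p)}$-polynomial ring and is monic in $\beta$, as recorded at the start of Section~\ref{obs2}. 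The one genuinely prime-power-specific remark is that $v_{p}(p^{e}-i)=v_{p}(i)\le e-1$ for $0<i<p^{e}$, so the coefficients $\frac{p^{e}}{p^{e}-i}$ even lie in $p\Z_{(p)}$; this is why $f\equiv z^{p^{e}}-\alpha_{1}^{p^{e}}+\beta\ (\text{mod }p)$ and it underlies Lemma~\ref{powermod}.

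For the first assertion I would substitute $z=\hat f^{k}(\alpha_{1})$, $w=\hat f^{k-1}(\alpha_{1})$ into the displayed identity, use $\frac{f(z)-f(w)}{z-w}\equiv f'(w)\ (\text{mod }z-w)$ together with $f'(z)=p^{e}\prod_{i=1}^{p^{e}-1}(z-\alpha_{i})$, and obtain
\[
\frac{\hat f_{k,1,p^{e}}}{\hat f_{k-1,1,p^{e}}}\equiv p^{e}\prod_{i=1}^{p^{e}-1}\bigl(\hat f^{k-1}(\alpha_{1})-\hat{\alpha_{i}}\bigr)\quad(\text{mod }\hat f_{k-1,1,p^{e}}).
\]
An irreducible $g\in\Z_{(p)}[p^{e}-2]$ dividing both $\hat f_{k-1,1,p^{e}}$ and $\hat f_{k,1,p^{e}}/\hat f_{k-1,1,p^{e}}$ must divide one factor $\hat f^{k-1}(\alpha_{1})-\hat{\alpha_{i}}$, and exactly as in Lemma~\ref{Eisen2} Thurston's rigidity rules out every $i\ge 2$ (such a $g$ would force $\alpha_{i}$ to be a fixed point, contradicting coprimality of $\hat f^{k}(\alpha_{1})-\hat f^{k-1}(\alpha_{1})$ and $\hat f(\alpha_{i})-\alpha_{i}$), so $g\mid\hat f^{k-1}(\alpha_{1})-\hat{\alpha_{1}}=\hat f_{0,k-1,p^{e}}$, whence $g\mid\hat f_{0,1,p^{e}}=\beta-\alpha_{1}$ by Lemma~\ref{k}. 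Thus $h_{k,1,p^{e}}=\hat f_{k,1,p^{e}}/\bigl(\hat f_{k-1,1,p^{e}}(\beta-\alpha_{1})^{t_{k}}\bigr)$ for some $t_{k}\ge 0$, and $t_{k}\ge 1$ since modulo $\beta-\alpha_{1}$ one has $f^{l}(\alpha_{1})\equiv\alpha_{1}$, hence $f_{k,1,p^{e}}/f_{k-1,1,p^{e}}\equiv f'(\alpha_{1})=0$.

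It remains to compute the lowest-degree homogeneous part and to see $t_{k}=1$. A short induction on $l$ shows the lowest-degree homogeneous component of $f^{l}(\alpha_{1})$ is $\beta$ for every $l\ge 1$ (if $u$ has no constant term then $f(u)=\beta+(\text{terms of degree}\ge 2)$). Since $k\ge 2$ makes both $f^{k}(\alpha_{1})$ and $f^{k-1}(\alpha_{1})$ have lowest-degree part $\beta$, and $\Phi_{m}(\beta,\beta)=m\beta^{m-1}$, the displayed factorization shows the lowest-degree homogeneous part of $f_{k,1,p^{e}}/f_{k-1,1,p^{e}}$ is $p^{e}\bigl(\beta^{p^{e}-1}+\sum_{i=2}^{p^{e}-1}(-1)^{i}s_{i}(\bar{\alpha})\beta^{p^{e}-i-1}\bigr)=f'(\beta)=p^{e}\prod_{i=1}^{p^{e}-1}(\beta-\alpha_{i})$. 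Because $\beta-\alpha_{1}$ is homogeneous of degree one and divides $f'(\beta)$, the lowest-degree part of $f_{k,1,p^{e}}/\bigl(f_{k-1,1,p^{e}}(\beta-\alpha_{1})\bigr)$ is $p^{e}\prod_{i=2}^{p^{e}-1}(\beta-\alpha_{i})$; applying $\;\hat{}\;$, which fixes $\beta-\alpha_{1}$, preserves the homogeneous grading, and does not annihilate this component (a product of nonzero linear forms), shows the lowest-degree part of $\hat f_{k,1,p^{e}}/\bigl(\hat f_{k-1,1,p^{e}}(\beta-\alpha_{1})\bigr)$ is $p^{e}\prod_{i=2}^{p^{e}-1}(\beta-\hat{\alpha_{i}})=p^{e}\bigl(\beta+\sum_{i=1}^{p^{e}-2}\alpha_{i}\bigr)\prod_{i=2}^{p^{e}-2}(\beta-\alpha_{i})$. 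Since the linear form $\beta+\sum_{i=1}^{p^{e}-2}\alpha_{i}$ is distinct from $\beta-\alpha_{1}$, the factor $\beta-\alpha_{1}$ does not divide this component, so $t_{k}=1$ and the displayed expression is the lowest-degree part of $h_{k,1,p^{e}}$. I do not expect a real obstacle: this lemma is a transcription of Lemma~\ref{Eisen2}, and the only step worth double-checking is that $\;\hat{}\;$ does not kill the lowest-degree homogeneous component, which it does not.
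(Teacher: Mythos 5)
Your proof is correct and is essentially the paper's own argument: the paper proves this lemma only by deferring to the proof of Lemma \ref{Eisen2}, and your write-up is precisely that transcription with $p$ replaced by $p^{e}$, carried out with the same steps (the $\Phi$-factorization, the congruence $\frac{f(z)-f(w)}{z-w}\equiv f'(w)\ (\mathrm{mod}\ z-w)$, Thurston's rigidity plus Lemma \ref{k} to isolate $\beta-\alpha_{1}$, and the lowest-degree computation via $f'(\beta)$). The two details you flag explicitly — that $v_{p}(p^{e}/(p^{e}-i))\geq 1$ and that $\;\hat{}\;$ does not annihilate the lowest-degree homogeneous component — are exactly the points where the prime-power case could conceivably differ, and both check out.
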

\begin{remark}
Due to Lemma \ref{respe}, we get that the resultant 
\begin{align*}
\mbox{Res}(h_{k,1,p^{e}},h_{0,1,p^{e}}) &=p^{e}(2\alpha_{1}+\alpha_{2}+\cdot \cdot \cdot+\alpha_{p^{e}-2})\prod_{i=2}^{p^{e}-2}(\alpha_{1}-\alpha_{i})\\
&\equiv 0\;(\text{mod }p^{2}),
\end{align*} 
for any $e\in \N, e\geq 2$. This shows the obstruction to directly generalise the proof of irreducibility of $\Sigma_{k,1,p}$ directly to the case of $\Sigma_{k,1,p^{e}}, e\geq 2$. 
\end{remark}

\end{document}